\newtheorem{theorem}{Theorem}
\newtheorem{lemma}[theorem]{Lemma}
\newtheorem{claim}{Claim}
\newtheorem{corollary}[theorem]{Corollary}
\theoremstyle{definition}
\newtheorem{definition}[theorem]{Definition}
\theoremstyle{definition}
\newtheorem{case}{Case}
\newcommand{\subcase}[1]{\vspace{4pt}{\bf Subcase {#1}}}
\newcommand{\cB}{\mathcal{B}}
\newcommand{\cC}{\mathcal{C}}
\newcommand{\cA}{\mathcal{A}}
\def\rad{1.3cm}
\def\inRad{.8cm}
\def\nodeSize{4mm}
\newcommand\drawNode[5]{
\draw (#1,#2) circle(\rad);
\foreach \x in {#3}
\node[draw,fill,black,circle,minimum size=\nodeSize, inner sep=0pt,xshift=#1cm,yshift=#2cm] (B#5-\x) at (\x:\inRad) {}; 
\foreach \x in {#4}
\node[draw,fill,red,circle,minimum size=\nodeSize, inner sep=0pt,xshift=#1cm,yshift=#2cm] (R#5-\x) at (\x:\inRad) {}; 
}
\newcommand\hideNode[5]{\draw[draw=none] (#1,#2) circle(\rad);}
\begin{document}
\title{Finding Independent Transversals Efficiently}
\author{Alessandra Graf} 
\author{Penny Haxell\thanks{Partially supported by NSERC.}}
\affil{Department of Combinatorics and Optimization, University of Waterloo, Waterloo, ON, Canada\\ (e-mail: \texttt{agraf@uwaterloo.ca}, \texttt{pehaxell@uwaterloo.ca})}

\maketitle

\begin{abstract}
We give an efficient algorithm that, given a graph $G$ and a partition $V_1,\ldots,V_m$ of its vertex set, finds either an {\it independent transversal} (an independent set $\{v_1,\ldots,v_m\}$ in $G$ such that $v_i\in V_i$ for each $i$), or a subset $\mathcal B$ of vertex classes such that the subgraph of $G$ induced by $\bigcup\mathcal B$ has a small dominating set. A non-algorithmic proof of this result has been known for a number of years and has been applied to solve many other problems. Thus we are able to give algorithmic versions of many of these applications, a few of which we describe explicitly here. 
\end{abstract}

\section{Introduction}

Let $G$ be a graph whose vertex set is partitioned into classes $V_1,\ldots,V_m$. An {\it independent transversal} (IT) of $G$ with respect to the given vertex partition is an independent set $\{v_1,\ldots,v_m\}$ in $G$ such that $v_i\in V_i$ for each $i$. This is a very general notion, and many combinatorial problems can be formulated by asking if a given graph with a given vertex partition has an IT. Indeed the SAT problem can be formulated in these terms (see e.g. \cite{Haxell2011}), and so we cannot expect to find an efficient characterisation of those $G$ for which an IT exists. However, there are now various known results giving sufficient conditions for the existence of an IT. One of the most easily stated and most frequently applied is the following result from~\cite{Haxell1995, Haxell2001}.

\begin{theorem}\label{maxdeg}
Let $G$ be a graph with maximum degree $\Delta$. Then for any vertex
partition $(V_1,\ldots,V_m)$ of $G$ where $|V_i|\geq2\Delta$ for each $i$, there
exists an independent transversal of $G$. 
\end{theorem}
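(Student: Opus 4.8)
The plan is to argue by contradiction, following Haxell's alternating (``rotation'') argument. Suppose $G$ has a partition $(V_1,\dots,V_m)$ with $|V_i|\ge 2\Delta$ for every $i$ but no independent transversal. We may assume $\Delta\ge 1$ (if $\Delta=0$ then $G$ is edgeless and any transversal is independent), and, deleting surplus vertices from each class, that $|V_i|=2\Delta$ for all $i$; this passes to an induced subgraph, so the maximum degree stays at most $\Delta$, and an independent transversal of the reduced instance would be one of $G$, so it too has none. Fix a \emph{partial independent transversal} $I$ --- an independent set meeting each class at most once --- meeting as many classes as possible, and let $A$ be a class that $I$ misses. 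Then $I$ is locally maximal: no independent set obtained from $I$ by adding or exchanging vertices meets more classes; in particular every vertex of $A$ has a neighbour in $I$. The goal is to contradict this by producing such an improvement.

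I would grow a set $\cB$ of classes with $A\in\cB$, together with a \emph{re-routing} $J$ of $I$: a partial independent transversal meeting exactly the classes $I$ does, agreeing with $I$ outside $\bigcup_{B\in\cB}B$, and carrying a rooted-tree structure --- a root vertex in $A$, one representative $r_B\in B$ of $J$ for each $B\in\cB\setminus\{A\}$, with $r_B$ adjacent only to representatives lying on the tree path from $B$ to the root. Starting from $\cB=\{A\}$ and $J=I$, I iterate: \textbf{(i)} if some vertex $x\in\bigcup_{B\in\cB}B$ is not used by $J$ and has no neighbour in $J$, rotate the representatives along the root path of $x$'s class to obtain a partial independent transversal meeting one more class than $I$ does --- a contradiction; \textbf{(ii)} otherwise, if some vertex of $\bigcup_{B\in\cB}B$ not used by $J$ has a $J$-neighbour in a class outside $\cB$, add all such classes to $\cB$, extend the tree, and reselect $J$; \textbf{(iii)} if neither applies, then every vertex of $\bigcup_{B\in\cB}B$ is either used by $J$ or adjacent to $J\cap\bigcup_{B\in\cB}B$, so the $|\cB|-1$ vertices of $J\cap\bigcup_{B\in\cB}B$ dominate $\bigcup_{B\in\cB}B$; but that union has $2\Delta|\cB|$ vertices while $|\cB|-1$ vertices of a graph of maximum degree at most $\Delta$ dominate at most $(\Delta+1)(|\cB|-1)$ vertices, and $(\Delta+1)(|\cB|-1)<2\Delta|\cB|$ for every $\Delta\ge 1$ --- again a contradiction. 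The loop can continue only through step (ii), which strictly enlarges $\cB$, so it halts within $m$ rounds at step (i) or (iii); hence $G$ has an independent transversal.

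The counting inequality in (iii), and identifying which classes to add in (ii), are routine. The crux --- and the genuinely delicate part of Haxell's argument --- is maintaining the re-routing invariant in step (ii): when a class $B'$ is pulled in because re-covering a class already in $\cB$ would collide with $J$'s representative in $B'$, one must reselect that representative (and possibly cascade the change down the tree) while keeping the already-arranged non-adjacencies and without cycling. I would control this by never abandoning the rooted-tree structure and proving, by induction from the leaves toward the root, that rotating the representatives along any root path always leaves an independent set --- which is exactly what makes step (i) yield a larger partial independent transversal. The threshold $2\Delta$ itself enters only in the counting step (iii); one could alternatively deduce the theorem from the general structural dichotomy alluded to in the introduction, but the argument sketched here is self-contained.
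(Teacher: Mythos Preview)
There is a genuine gap here. The paper derives Theorem~\ref{maxdeg} from Theorem~\ref{maxtrans} in one line (the domination count), and the sketch of Theorem~\ref{maxtrans} in Section~\ref{over} builds a constellation $T=\{x_1,N_M(x_1),\ldots,x_t,N_M(x_t)\}$ where each centre $x_i$ is chosen non-adjacent to all of $T$ so far. When $d_M(x_i)=0$ one swaps $x_i$ into $M$ and removes the $M$-vertex $y\in A(x_i)$; this decreases $d_M(x_j)$ (for the $j$ with $y\in N_M(x_j)$) by one but need \emph{not} make it zero. The tree is then truncated and rebuilt, and progress is tracked by the signature vector $(d_M(x_1),\ldots,d_M(x_t),\infty)$ decreasing lexicographically. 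Many swap/rebuild rounds can occur with the class set $A(T)$ unchanged, and the eventual dominating set is $T$, of size at most $2(|\cB|-1)$.

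Your scheme replaces this with a loop that strictly enlarges $\cB$ at every non-terminal step and a dominating set of only $|\cB|-1$ vertices. Step~(i) is where it breaks: you assert that a vertex $x\in B$ with no $J$-neighbour lets you ``rotate along the root path'' to a PIT covering one more class. But swapping $x$ for $r_B$ gives a PIT of the \emph{same} size; to gain a class you need a pre-stored candidate $c\in P(B)$ whose only $J$-neighbour was $r_B$, then another such candidate in $P(P(B))$, and so on up to $A$. Nothing in your invariants supplies these candidates---the stated condition ``$r_B$ adjacent only to representatives on the tree path'' is vacuous because $J$ is independent so $r_B$ is adjacent to no representative at all---and in general the natural candidate in $P(B)$ has several $J$-neighbours (exactly why the Haxell tree branches and $d_M(x_j)$ can exceed $1$). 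You correctly flag this as ``the crux'', but the inductive claim you offer does not construct the candidates or explain how ``reselecting $J$'' in~(ii) would. The fix is precisely the mechanism you omitted: accept that a single swap need not cascade to the root, drop the claim that $\cB$ grows each round, and measure progress by the signature vector instead.
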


Theorem~\ref{maxdeg} answered the question of how large the vertex
classes need to be, in terms of the maximum degree, to guarantee the
existence of an IT in $G$. This question was first introduced and
studied in 1975 by Bollob\'as, Erd\H os and Szemer\'edi in
\cite{Bollobas1975}, and further progress was contributed over the
years by many authors. In particular, linear upper bounds in terms of
$\Delta$ were given by Alon~\cite{Alon1988} (in an early application
of the Lov\'asz Local Lemma~\cite{Lovasz1975})  and independently
Fellows~\cite{Fellows1990}, and a later application of the Local Lemma
gave that class size $2e\Delta$ is sufficient (see e.g. Alon and
Spencer~\cite{Alon2008}). Further refining this approach, Bissacot, Fern\'andez, Procacci and Scoppola~\cite{Bissacot2011} improved this to $4\Delta$.  Work on lower bounds 
 included results of Jin~\cite{Jin1992}, Yuster~\cite{ Yuster1997}, and
Alon~\cite{ Alon2003}, and in 2006
Szab\'o and Tardos~\cite{Szabo2006} gave constructions for every
$\Delta$ in which $|V_i|=2\Delta-1$ for each $i$ but there is no IT.
Therefore Theorem~\ref{maxdeg} is best possible for every value of $\Delta$. A
more precise version of Theorem~\ref{maxdeg} (involving also the
number $m$ of vertex classes) is given in \cite{Haxell2004}. 

Theorem~\ref{maxdeg} is an immediate consequence of a more general
statement described in terms of domination
(stated explicitly in~\cite{Haxell2001}, although it follows easily
from the argument in~\cite{Haxell1995}). We
say that a subset $D\subseteq V(G)$ {\it dominates} a subgraph
$W$ of $G$ if for all $w\in V(W)$, there exists $uw\in E(G)$ for
some $u\in D$. (This definition of domination is quite often referred
to as {\it strong domination} or {\it total domination}, but since it is the only notion of
domination that we will refer to in this paper, we will use the
simpler term.) 
For a vertex partition $(V_1,\ldots,V_m)$ of $G$ and a subset $\cB$ of
$\{V_1,\ldots,V_m\}$, we write $G_{\cB}$ for the subgraph
$G[\bigcup_{V_i\in\cB}V_i]-\{uv\in E(G):u,v\in V_i\hbox{ for some }V_i\in\cB\}$ obtained by removing any edges inside vertex classes from the subgraph of $G$ induced by the union of the classes in $\cB$.  

\begin{theorem}\label{maxtrans}
Let $G$ be a graph with a vertex partition $(V_1,\ldots,V_m)$. Suppose
that, for each $\cB\subseteq\{V_1,\ldots,V_m\}$, the subgraph
$G_{\cB}$ is not dominated in $G_{\cB}$ by any set of size at most
$2(|\cB|-1)$. Then $G$ has an IT.
\end{theorem}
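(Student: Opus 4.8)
The plan is to prove the contrapositive in the following sharp form: if $G$ has no IT, then there is a nonempty $\cB\subseteq\{V_1,\ldots,V_m\}$ for which $G_{\cB}$ is dominated in $G_{\cB}$ by a set of size at most $2(|\cB|-1)$. Fix a partial independent transversal $I=\{x_1,\ldots,x_k\}$ of maximum size, with (after relabelling) $x_i\in V_i$; since there is no IT we have $k<m$, so some class $V^*$ is missed by $I$, and maximality forces every vertex of $V^*$ to have a neighbour in $I$, as otherwise $I$ could be extended. From here I would run an augmenting search rooted at $V^*$, much as one searches for an augmenting path in a matching: it either succeeds and yields a partial IT of size $k+1$, contradicting maximality, or it gets stuck, and the stuck configuration hands us the set $\cB$ with the required small dominating set. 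The case $m=1$ is trivial: applying the hypothesis with $\cB=\{V_1\}$, where $G_{\cB}$ is edgeless, merely says $V_1\neq\emptyset$, and any vertex of $V_1$ is an IT.

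Concretely, I would grow a rooted tree $T$ whose nodes are vertices of $G$ (with the root a formal node standing for the class $V^*$), alternating along every root-to-leaf path between ``old'' vertices --- those currently in $I$ --- at even depth and ``new'' vertices --- those outside $I$ --- at odd depth. The children of the root are the vertices of $V^*$; from a new vertex $v$ we branch to \emph{all} neighbours of $v$ lying in $I$ (each such vertex must leave $I$ before $v$ can enter); and from an old vertex $x_\ell\in V_\ell\cap I$ we branch to \emph{all} other vertices of $V_\ell$ (the candidates to replace $x_\ell$), recording $V_\ell$, along with $V^*$, into $\cB$. We grow $T$ until no further node can be added. Two bookkeeping facts then drop out: once $V_\ell\in\cB$ the whole of $V_\ell$ has been placed in $T$, so $V(G_{\cB})\subseteq V(T)$; and every class of $\cB$ other than $V^*$ is met by $I$ and contributes exactly one old vertex to $T$, namely its $I$-vertex, so $T$ has precisely $|\cB|-1$ old vertices.

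Now split on the outcome. If at some point a new vertex $v$ appears all of whose $I$-neighbours are ancestors of $v$ in $T$ --- the analogue of reaching the free end of an augmenting path --- then I would carry out the exchange this prescribes: delete the old vertices involved from $I$ and insert in their place the new vertices selected along the way. One checks that the result is again an independent partial transversal, now meeting one more class than $I$, contradicting the maximality of $I$; so this does not occur. Otherwise the search halts with no such $v$, and I claim that
\[
D:=\{\,x_\ell:\ V_\ell\in\cB\setminus\{V^*\}\,\}\ \cup\ \{\,\operatorname{parent}_T(x_\ell):\ V_\ell\in\cB\setminus\{V^*\}\,\}
\]
dominates $G_{\cB}$ and is contained in $\bigcup\cB$. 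Indeed, each old vertex $x_\ell$ of $T$ is dominated by its parent, a new vertex adjacent to it; and each new vertex of $T$ has a neighbour in $I$ (otherwise it would be a free end), and since we branched to \emph{all} such neighbours, that neighbour lies in $T$, hence is one of the $x_\ell$'s. As $|D|\le 2|\cB\setminus\{V^*\}|=2(|\cB|-1)$, this contradicts the hypothesis, and the proof is complete.

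The step I expect to be the genuine obstacle is making the augmenting half precise and keeping it compatible with the domination half. Because a new vertex may have several neighbours in $I$, the search tree genuinely branches, so the exchange triggered by a free end is governed by a whole subtree of $T$, not by a single path; one must then argue that the new vertices this subtree selects can be taken pairwise non-adjacent --- without which the exchanged set fails to be independent --- and, when this cannot be arranged, extract a small dominating set from precisely that failure. Pinning down the growth rule for $T$ so that (i) every successful search yields a strictly larger partial IT and (ii) every stuck search leaves \emph{every} vertex of $G_{\cB}$ dominated within the budget of two vertices per non-root class is the crux of the argument; it is also the delicate point one must implement with care to obtain the efficient algorithm the abstract promises.
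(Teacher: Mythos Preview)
Your domination half is essentially sound: if the search halts with no ``free end'', the old vertices together with their parents form a set of size at most $2(|\cB|-1)$ dominating $G_{\cB}$, and this part goes through once one assumes (without loss of generality) that each class is independent.

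The gap is exactly where you locate it, but it is not a detail to be patched---it is the point where your growth rule fails. In your tree you branch from each old vertex $x_\ell$ to \emph{all} other vertices of $V_\ell$. Consequently an intermediate new vertex $u_j$ on the root-to-$v$ path may have $I$-neighbours off that path (indeed, all of its $I$-neighbours are its children in $T$, and only one of those lies on the path to your free end $v$). So the exchange $I\mapsto (I\setminus\{x_{i_1},\ldots,x_{i_t}\})\cup\{u_0,\ldots,u_t\}$ is in general not independent. Passing to a subtree does not help: the subtree you would need must have a free end at every leaf \emph{and} pairwise non-adjacent new vertices throughout, and nothing in your construction provides this---the new vertices you collect are just all non-$I$ vertices of the reached classes, with no control on edges among them.

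The paper's proof uses a different growth rule that makes the exchange step trivial at the cost of a more delicate termination argument. Rather than adding all vertices of each reached class, one adds at each step a \emph{single} vertex $x_i\in\bigcup_{V_j\in A(T)}V_j$ chosen to have \emph{no neighbour whatsoever} in the current $T$; such an $x_i$ exists precisely because $|T|\le 2(|A(T)|-1)$ is maintained and the hypothesis forbids $T$ from dominating $G_{A(T)}$. One then adds $N_M(x_i)$ to $T$. This keeps $T$ a constellation (a forest of stars with centres $x_i$ and leaves in $M$), and in particular the $x_i$'s are automatically pairwise non-adjacent. When some $x_i$ satisfies $d_M(x_i)=0$ one does a \emph{single} local swap---replace the $M$-vertex in $A(x_i)$ by $x_i$---rather than a path exchange; this does not enlarge $M$ but strictly decreases the signature vector $(d_M(x_1),\ldots,d_M(x_t),\infty)$ lexicographically, and iterating eventually frees a vertex of the root class. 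The hypothesis is thus invoked \emph{at every growth step} to supply the next non-dominated $x_i$, not only at the end; that is the idea your plan is missing.
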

To see that Theorem~\ref{maxtrans} implies Theorem~\ref{maxdeg},
simply note that if the union of $|\cB|$ vertex classes in $G$ contains
a total of $2\Delta|\cB|$ vertices then $G_{\cB}$ cannot be dominated by
$2|\cB|-2$ vertices of degree at most $\Delta$. 

In fact the proof of Theorem~\ref{maxtrans} shows that if $G$ does not
have an IT then there exists $\cB\subseteq\{V_1,\ldots,V_m\}$ such that
$G_{\cB}$ is dominated by the vertex set of a {\it constellation} for
$\cB$.
\begin{definition}\label{constell}
Let $\cB$ be a set of vertex classes in a vertex-partitioned graph
$G$. A {\it constellation} for $\cB$ is an induced subgraph $K$ of
$G_{\cB}$, whose components are stars with at least two vertices, each with
a {\it centre} and a nonempty set of {\it leaves} distinct from its
centre. The set of all leaves of $K$ forms an IT of $|\cB|-1$
vertex classes of $\cB$.
\end{definition}
Note that if $K$ is a constellation for $\cB$ then
$|V(K)|\leq2(|\cB|-1)$. Figure~\ref{const} shows an example of a constellation. 

\begin{figure}[!ht]
\centering
\scalebox{.36}{
\begin{tikzpicture}
\drawNode{1}{0}{-180,0}{}{a}
\drawNode{-4}{-2}{-30,-150}{90}{b}
\drawNode{6}{-2}{-30,-150}{90}{c}
\drawNode{-6}{-5}{-90}{90}{d}
\drawNode{-2}{-5}{}{90}{e}
\drawNode{2}{-5}{-90}{90}{f}
\drawNode{6}{-5}{-90}{90}{g}
\drawNode{10}{-5}{-90}{90}{h}
\drawNode{-7}{-8}{}{90}{i}
\drawNode{-4}{-8}{}{90}{j}
\drawNode{-1}{-8}{}{90}{k}
\drawNode{2}{-8}{}{90}{l}
\drawNode{5}{-8}{}{90}{m}
\drawNode{8}{-8}{}{90}{n}
\drawNode{11}{-8}{}{90}{o}

\draw[line width=2pt] (Ba--180)--(Rb-90);
\draw[line width=2pt] (Ba-0)--(Rc-90);
\draw[line width=2pt] (Bb--150)--(Rd-90);
\draw[line width=2pt] (Bb--30)--(Re-90);
\draw[line width=2pt] (Bc--150)--(Rf-90);
\draw[line width=2pt] (Bc--150)--(Rg-90);
\draw[line width=2pt] (Bc--30)--(Rh-90);
\draw[line width=2pt] (Bd--90)--(Ri-90);
\draw[line width=2pt] (Bd--90)--(Rj-90);
\draw[line width=2pt] (Bd--90)--(Rk-90);
\draw[line width=2pt] (Bf--90)--(Rl-90);
\draw[line width=2pt] (Bg--90)--(Rm-90);
\draw[line width=2pt] (Bh--90)--(Rn-90);
\draw[line width=2pt] (Bh--90)--(Ro-90);

\node [left = 7 mm of Ba--180,font=\fontsize{30}{0}](label){$x_1$};
\node [right = 6 mm of Ba-0,font=\fontsize{30}{0}](label){$x_3$};
\node [left = 4 mm of Bb--150,font=\fontsize{30}{0}](label){$x_2$};
\node [right = 4 mm of Bb--30,font=\fontsize{30}{0}](label){$x_4$};
\node [left = 5 mm of Bc--150,font=\fontsize{30}{0}](label){$x_7$};
\node [right = 4 mm of Bc--30,font=\fontsize{30}{0}](label){$x_5$};
\node [right = 10 mm of Bd--90,font=\fontsize{30}{0}](label){$x_6$};
\node [left = 9 mm of Bf--90,font=\fontsize{30}{0}](label){$x_8$};
\node [left = 9 mm of Bg--90,font=\fontsize{30}{0}](label){$x_9$};
\node [left = 9 mm of Bh--90,font=\fontsize{30}{0}](label){$x_{10}$};

\draw (12,0) circle(\rad);
\draw (14,-5) circle(\rad);
\draw (16,-1) circle(\rad);
\draw (18,-4) circle(\rad);
\draw (17,-7.5) circle(\rad);

\draw[dashed,rounded corners=5pt, inner sep=4pt] 
(-9,-9.5) to (-8,-3) to (-4,1.5) to (7,1.5) to (11.5, -3) to (13,-9.5) to (-9,-9.5);
\end{tikzpicture}}
\caption{A constellation $K$ for the set $\cB$ of classes enclosed by
  the dotted border. Each circle represents a vertex class. The centres of the stars appear in black
 (and have labels) and the leaves appear in red.} 
\label{const}
\end{figure}
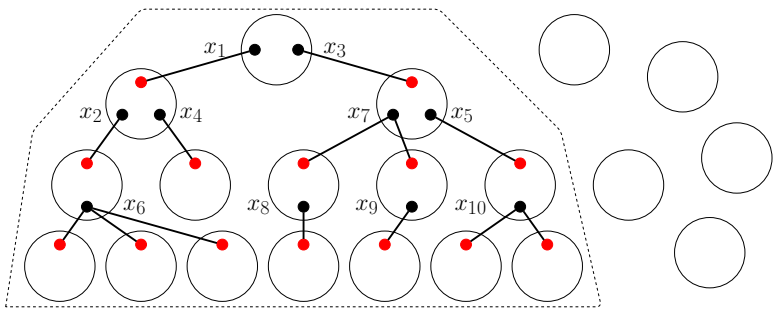

Theorem~\ref{maxtrans} (often in the form of Theorem~\ref{maxdeg}) has
been applied to obtain many results in various fields, including graph
theory (e.g. list colouring~\cite{Haxell2001}, strong colouring~\cite{Haxell2004, Aharoni2007}, delay edge colouring~\cite{Alon2007},
circular colouring~\cite{Kaiser2004, Kaiser2007}, various graph partitioning and special independent
set problems~\cite{Alon+2003,Haxell2003,King2011,Christofides2013}), hypergraphs (e.g. hypergraph matching~\cite{Haxell1995,Krivelevich1997,Annamalai2016,AnnamalaiThesis}), group theory
(e.g. generators in linear groups~\cite{Britnell2008}), and theoretical computer science
(e.g. job scheduling and other resource allocation
problems~\cite{Asadpour2008, Asadpour2012}). Unfortunately, the proofs of Theorems~\ref{maxdeg}
and~\ref{maxtrans} are not algorithmic. For certain applications it
is enough to know that class sizes of $c\Delta$ in
Theorem~\ref{maxdeg} guarantee an IT for some constant $c$, so for 
these one could obtain algorithmic versions using (for
example) the algorithmic Lov\'asz Local Lemma (\cite{Beck1991, Moser2009,
  Moser2010}). However, for many other applications, having the best
possible value of the constant $c$ is important.
This therefore raises the question of
how much the hypotheses in Theorems~\ref{maxtrans} and~\ref{maxdeg}
need to be strengthened in order to guarantee 
that an IT can be found efficiently, thereby giving
algorithmic proofs of these applications with the constants being as
close as possible to their optimal values. 

Most of the known results on this question have focused on Theorem~\ref{maxdeg}, and have been obtained as applications of algorithmic versions of the Lov\'asz Local Lemma or its lopsided variant. These include the original algorithm of Beck~\cite{Beck1991} and its improvements (see e.g. \cite{Alon1991, Molloy1998, Czumaj2000, Srinivasan2008, Bissacot2011}) as well as the resampling algorithm of Moser and Tardos~\cite{Moser2009} and its improvements (see e.g. \cite{Kolipaka2011, Pegden2014, Kolipaka2012, Harris2014, Achlioptas2016, Harris2013}. In particular, using the Moser-Tardos approach (and based on~\cite{Bissacot2011, Pegden2014}), Harris and Srinivasan~\cite{Harris2017} gave a randomized algorithm that finds an IT in expected time $O(m\Delta)$ in graphs with class size $4\Delta$. The current best result for polynomial expected time is due to Harris~\cite{Harris2016} who improved the bound on the class size to $4\Delta-1$. Deterministic algorithms based on derandomizing the Moser-Tardos algorithm have also been studied, but they require the class size be $C\Delta$ for some large constant $C$ in order to find an IT efficiently~\cite{Fischer2017,Harris2018}. Some of these deterministic algorithms are known to be parallelizable~\cite{Chandrasekaran2013, Harris2018}.

In this paper, we address the algorithmic IT question for a large class of
graphs, without using the Lov\'asz Local Lemma or any of its variants. A graph $G$ with vertex
partition $(V_1,\dots,V_m)$ is said to be {\it $r$-claw-free with
  respect to} $(V_1,\dots,V_m)$ if no vertex of $G$ has $r$
independent neighbours in distinct vertex classes. 
Our main theorem is as follows.   
\begin{theorem}\label{main} 
There exists an
algorithm FindITorBD that takes as input $r\in\mathbb{N}$ and
$\epsilon>0$, and a graph $G$ with vertex 
partition $(V_1,\dots,V_m)$ such that $G$ is $r$-claw-free with respect to
$(V_1,\dots,V_m)$, and finds either:  
\begin{enumerate}
\item an independent transversal in $G$, or
\item a set $\mathcal B$ of vertex classes and a set $D$ of vertices
  of $G$ such that $D$ dominates $G_{\cB}$ in $G$ and
  $|D|<(2+\epsilon)(|{\mathcal B}|-1)$. Moreover $D$ contains $V(K)$
  for a constellation $K$ for some $\cB_0\supseteq\cB$, where
  $|D\setminus V(K)|<\epsilon(|\cB|-1)$.
\end{enumerate}
For fixed $r$ and $\epsilon$, the runtime is polynomial in $|V(G)|$.
\end{theorem}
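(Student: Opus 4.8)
The algorithm maintains a partial independent transversal $I$ — an IT of some subset $S$ of the classes — and repeatedly tries to enlarge $S$. If the target class $V_t\notin S$ has a vertex with no neighbour in $I$ we extend $I$ and recurse, so assume otherwise and invoke a \emph{search subroutine}. This builds an alternating structure in layers: layer $0$ is a single vertex of $V_t$ that we wish to insert, and in general the structure built so far determines a set $\mathcal{B}$ of \emph{blocked} classes (those whose $I$-vertex is currently being used to block an insertion at a lower layer); we look for an \emph{augmenting} vertex $w$ lying in some class of $\mathcal{B}$ that is not dominated by the vertices already in the structure. If such a $w$ exists we \emph{update} (below); if not, every class of $\mathcal{B}$ is blocked, and we add a new layer recording, for each $V_i\in\mathcal{B}$, a vertex of $V_i$ responsible for the block together with the (at most $r-1$) classes that in turn block it. This is where $r$-claw-freeness enters: the leaf set of the structure is at all times independent and meets each class at most once, so any single vertex has at most $r-1$ neighbours in it, and hence each new layer is at most $r-1$ times as large as the previous one. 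When the subroutine halts without having inserted a vertex into $V_t$, its vertices form — up to a small surplus — a constellation in the sense of Definition~\ref{constell} for the set $\mathcal{B}_0$ of classes appearing in it, and they dominate $G_{\mathcal{B}}$; reading off $D$ and $K$ from this structure gives conclusion~2 of Theorem~\ref{main}.

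What makes all of this run in polynomial time, and the reason conclusion~2 carries the slack $(2+\epsilon)(|\mathcal{B}|-1)$ rather than the tight $2(|\mathcal{B}|-1)$, is a \emph{laziness} rule in the spirit of Annamalai's local-search algorithm for hypergraph matching~\cite{Annamalai2016, AnnamalaiThesis}: a new layer is created only if it would enlarge the structure by at least a factor $1+\delta$, for a constant $\delta=\delta(\epsilon)$; otherwise the subroutine stops. On the one hand this bounds the number of layers by $O_\delta(\log|V(G)|)$, so together with the factor-$(r-1)$ growth per layer the structure has size polynomial in $|V(G)|$ for fixed $r$ and $\epsilon$. On the other hand, when the subroutine stops, the vertices brought in at the last, aborted layer number fewer than $\epsilon(|\mathcal{B}|-1)$, and these are precisely the surplus vertices $D\setminus V(K)$ of conclusion~2; a short count then yields $|D|<(2+\epsilon)(|\mathcal{B}|-1)$.

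When an augmenting vertex $w$ in a blocked class $V_i$ is found, we update: $w$ replaces the current leaf of the structure in $V_i$, which frees the vertex that had been using that leaf to block a lower layer; we propagate this freeing downward, and either it reaches layer $0$ and a vertex of $V_t$ is inserted (so $|S|$ increases), or it lands at an intermediate layer, which we truncate and from which we resume searching. To bound the number of updates I would equip each configuration with the potential vector whose $\ell$-th entry counts the vertices of the structure in layers $0,\dots,\ell$: each update strictly increases this vector in lexicographic order, each successful enlargement of $I$ resets it but at a strictly larger value of $|S|$, every entry is at most the (polynomial) structure size, and there are only $O_\delta(\log|V(G)|)$ entries. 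Hence only polynomially many updates occur before $|S|$ grows, and $|S|$ grows at most $m$ times, which gives the runtime bound.

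The main obstacle is this last analysis. One has to check that the laziness rule stays internally consistent — that after a truncation the remaining structure still obeys the $(1+\delta)$-growth condition layer by layer, so that no aborted layer ever has to be reopened in a way that breaks the lexicographic potential — and that the potential strictly decreases at \emph{every} update, with no cycling. Making this work forces a careful choice of which augmenting vertex and which blocking vertex to record, and it is exactly this that pins down the precise form of conclusion~2, namely a constellation plus a controlled surplus of size $<\epsilon(|\mathcal{B}|-1)$. Everything else — verifying that leaf sets remain independent transversals, that the final structure is genuinely a constellation, and the arithmetic turning $\delta$ into $\epsilon$ — is routine bookkeeping.
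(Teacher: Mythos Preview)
Your overall architecture is right — layers, a laziness parameter, a lexicographically monotone potential — but two of the load-bearing ideas are missing, and without them the argument does not give a polynomial bound.

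\textbf{Laziness belongs on the updates, not (only) on layer creation.} You propose single-vertex swaps: when an undominated vertex $w$ is found, replace the leaf in its class and propagate. That is exactly the step in the non-algorithmic proof of Theorem~\ref{maxtrans}, and it is the reason that proof takes $(r-1)^m$ steps. The paper's fix (following Annamalai) is to collapse a layer only when at least a $\mu$-fraction of its $X$-vertices have become immediately addable, and symmetrically to rebuild a layer (SuperposedBuild) only when it can be enlarged by a factor $1+\mu$. These clustered updates are what make each step change the relevant layer size by a constant \emph{factor}, which is what a log-scale signature needs. Your laziness rule on layer creation corresponds to the paper's $\rho$-termination condition, and is necessary, but it is not the rule that drives the runtime.

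\textbf{Your potential only gives a quasi-polynomial bound.} With $O_\delta(\log n)$ coordinates each bounded by a polynomial in $n$, the number of lexicographically increasing steps is $n^{O(\log n)}$, not $n^{O(1)}$. The paper's signature vector has two entries per layer of the form $\pm\lfloor\log_b(\text{layer size})\rfloor$, and a short computation (Lemma~\ref{absval}) shows these entries are non-decreasing in absolute value. That monotonicity lets one encode the entire signature as a \emph{subset} of $\{1,\dots,O(\log m)\}$, so there are only $2^{O(\log m)}=m^{O(1)}$ of them. Both the log-scale entries and the monotonicity are needed; your cumulative-count vector has neither. Finally, once the clustered updates are in place, the surplus $D\setminus V(K)$ in outcome~(2) is no longer just the aborted last layer: it also picks up the vertices introduced by the lazy rebuild ($X'_i\setminus X_i$, $Y'_i\setminus Y_i$), the residual immediately-addable vertices, and an auxiliary set $S$ dominating them — see the decomposition in Lemma~\ref{allbigx}.
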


Note in particular that any graph with maximum degree $\Delta$ is
$(\Delta+1)$-claw-free with respect to any partition. Thus taking
$r=\Delta+1$ and $\epsilon=1/\Delta$ gives the following algorithmic version of
Theorem~\ref{maxdeg}.

\begin{corollary}\label{maxdegalg}
Let $\Delta\in\mathbb N$ be given. Then there exists an algorithm
that takes as input any graph $G$ with maximum degree $\Delta$ and
vertex partition $(V_1,\dots,V_m)$ such that ${|V_i|\ge2\Delta+1}$ for
each $i$ and finds, in time polynomial in $|V(G)|$, an independent
transversal in $G$. 
\end{corollary}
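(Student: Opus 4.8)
The plan is to obtain Corollary~\ref{maxdegalg} as a direct consequence of Theorem~\ref{main} with a specific choice of the parameters $r$ and $\epsilon$; the only thing needing verification is that, under the hypothesis $|V_i|\ge 2\Delta+1$, the second possible output of FindITorBD cannot occur. First I would note that every graph $G$ of maximum degree $\Delta$ is $(\Delta+1)$-claw-free with respect to any vertex partition: a vertex has at most $\Delta$ neighbours in $G$ in total, so it certainly cannot have $\Delta+1$ pairwise non-adjacent neighbours lying in distinct classes. Hence Theorem~\ref{main} applies with $r=\Delta+1$, and I would additionally take $\epsilon=1/\Delta$.

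I would then run FindITorBD on the input $(r,\epsilon,G,V_1,\dots,V_m)$ with these values. If it returns an independent transversal (output~1), we are done. Otherwise it returns a set $\cB$ of vertex classes together with a set $D$ of vertices such that $D$ dominates $G_{\cB}$ in $G$ and $|D|<(2+\epsilon)(|\cB|-1)$; the additional information that $D$ contains a constellation is not needed here. To rule this out I would count the vertices of $G_{\cB}$ two ways. On one hand $G_{\cB}$ has $\sum_{V_i\in\cB}|V_i|\ge(2\Delta+1)|\cB|$ vertices. On the other hand, every vertex of $D$ has degree at most $\Delta$ in $G$ and hence dominates at most $\Delta$ vertices of $G_{\cB}$, so $D$ dominates at most
\[
\Delta|D|<\Delta(2+1/\Delta)(|\cB|-1)=(2\Delta+1)(|\cB|-1)<(2\Delta+1)|\cB|
\]
vertices, contradicting the fact that $D$ dominates all of $G_{\cB}$. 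Therefore output~2 never arises, and FindITorBD always returns an independent transversal.

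For the running time, once $\Delta$ is given both $r=\Delta+1$ and $\epsilon=1/\Delta$ are fixed, so the runtime guarantee of Theorem~\ref{main} — polynomial in $|V(G)|$ for fixed $r$ and $\epsilon$ — yields exactly the claimed bound. I do not expect a genuine obstacle in this argument; the one point to get right is that the bound $|D|<(2+\epsilon)(|\cB|-1)$ together with the degree bound really does force a contradiction, and this is precisely why $\epsilon$ is taken as small as $1/\Delta$: it is exactly what is needed to push $\Delta(2+\epsilon)(|\cB|-1)$ below $(2\Delta+1)|\cB|$. Everything else is routine bookkeeping.
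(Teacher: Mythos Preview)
Your proposal is correct and is essentially the paper's own argument: the paper states just before Corollary~\ref{maxdegalg} that one takes $r=\Delta+1$ and $\epsilon=1/\Delta$, and the counting argument you give (each vertex of $D$ dominates at most $\Delta$ vertices, so a set of size less than $(2+1/\Delta)(|\cB|-1)$ cannot dominate $(2\Delta+1)|\cB|$ vertices) is exactly the reasoning used in the paper's derivation of Theorem~\ref{maxdeg} from Theorem~\ref{maxtrans}.
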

Therefore only a slight strengthening of the hypotheses is required to
make these results algorithmic. 

As shown in the proof of Theorem~\ref{main} in Section~\ref{sigs},
the runtime of the algorithm FindITorBD is 
$O(|V(G)|^g)$ where $g$ is a function of $r$ and $\epsilon$. Similarly for Corollary~\ref{maxdegalg} the degree $g$ depends on $\Delta$. We remark
that, for simplicity, in this paper we make no attempt 
to optimize the running time of our algorithms in terms of these
parameters.

The proof of Theorem~\ref{main} explicitly describes the algorithm
FindITorBD. It uses ideas from the original (non-algorithmic) proof
of Theorem~\ref{maxtrans} (see \cite{Haxell1995, Haxell2003}), and
modifications of several key notions (including that of ``lazy updates'') 
introduced by Annamalai in~\cite{Annamalai2016,Annamalai2017}, who
gave an algorithmic version of the
specific case of matchings in bipartite hypergraphs.
This appears as Theorem~\ref{r-partite} in
Section~\ref{apps}, and is relevant
to other well-studied problems such as the restricted max-min fair
allocation problem (also known as the Santa Claus problem), see~\cite{Asadpour2008, Asadpour2012}. Theorem~\ref{main} is a broad generalisation of
Theorem~\ref{r-partite} which, because of the large number of
applications of Theorems~\ref{maxdeg} and~\ref{maxtrans} over the years, has
algorithmic consequences for many results in a wide variety of settings.
In addition to describing the case of bipartite hypergraph matching, in
Section~\ref{apps} we outline algorithmic versions of a few more selected
applications of
Theorem~\ref{maxtrans} that follow from our results. Here we have
chosen to discuss circular chromatic index (Kaiser, Kr\'al and \v
 Skrekovski~\cite{Kaiser2004}), strong colouring (Aharoni, Berger and
 Ziv~\cite{Aharoni2007}), and hitting sets for maximum cliques
 (King~\cite{King2011}), but there are many other examples, some of
 which are described in detail in~\cite{thesis}. In each case, the
 algorithmic version is only slightly weaker than the original result
 due to the error $\epsilon$ introduced in Theorem~\ref{main}. In fact
 for some applications (for example the results on circular chromatic
 index and hitting sets for  maximum cliques) no weakening at all is needed.

This paper is organised as follows. In Section~\ref{over} we give an
overview of the proof of our main result, by first outlining the proof of
Theorem~\ref{maxtrans} (which gives an exponential algorithm) and then
sketching how we modify it to make the algorithm efficient. Our
algorithms are described in detail in
Section~\ref{algs}, after definitions and other preliminary material
in Section~\ref{prelims}. The main components of the algorithm are
analysed in Section~\ref{analysis} and the running time in
Section~\ref{sigs}. Section~\ref{apps} is devoted to applications of
our results, and Section~\ref{concrem} contains concluding remarks and
open questions. 

\section{Setup and Overview}\label{over}
Throughout this paper we will work with the following notation and
assumptions. Let $r$ and $\epsilon$ be fixed, let $G$ be a graph and
let $(V_1,\dots,V_m)$ be a vertex partition of 
$G$ such that $G$ is $r$-claw-free with respect to
$(V_1,\dots,V_m)$. By deleting the edges between vertices in the same
vertex class $V_i$ and considering the resulting graph $G'$, we may
assume without loss of generality that each vertex class $V_i$ is an
independent set of vertices. This is because a set $M$ is an IT
  of $G'$ if and only if $M$ is an IT of $G$. Since the case $m=1$ is
trivial we may assume from now on that $m\ge2$.

Our algorithms will seek to construct an IT of
$G$ step by step, by augmenting a previously constructed 
{\it partial independent transversal} (PIT) of $G$ with respect to the
given vertex partition. A PIT is simply an independent set $M$ in
$G$ (of size at most $m$) such that no two vertices of $M$ are in
the same vertex class.

Note that any isolated vertex can be added to any PIT that does not
contain a vertex in its vertex class. Thus, we may remove the vertex
classes from $V_1,\dots, V_m$ that contain at least one isolated
vertex and consider the induced subgraph of the remaining vertex
classes as $G$ under the same partition of these vertices. We will
therefore assume from now on that $G$ does not contain an isolated
vertex. In particular, we may also assume that $r\ge2$. 

We denote the vertex class that contains
the vertex $v\in V(G)$ by $A(v)$ and the set of vertex classes
containing $W\subseteq V(G)$ by ${A(W)=\{A(v):v\in W\}}$. We write
$N(v)$ for the neighbourhood in $G$ of $v$, and
$N_W(v)$ for $N(v)\cap W$. We denote $|N_W(v)|$ by $d_W(v)$.

To give an overview of the proof of Theorem~\ref{main}, we first
sketch the proof of Theorem~\ref{maxtrans}. The proof does give a
procedure for constructing an IT, but (as we will note after the
sketch) the number of steps could be as large as $(r-1)^m$.

\begin{proof}[Sketch of the proof of Theorem~\ref{maxtrans}:] 
Let $M$
  be a PIT and $A$ be a vertex class such that $A\cap M=\emptyset$. We
  aim to alter $M$ until it can be augmented by a vertex of $A$.

We build a ``tree-like structure'' $T$ (which we describe as a vertex
set inducing a forest of stars) as
follows. Choose $x_1\in A$ and set $T=\{x_1\}$.
If $d_M(x_1)=0$ then {\it improve} $M$ by adding $x_1$ to $M$, and stop.
Otherwise add $N_M(x_1)$ to $T$.

In the general $i$-th step: it can easily be shown that
$|T|\leq2(|A(T)|-1)$. Thus by assumption the subgraph $G_{A(T)}$ of
$G$ induced by $\bigcup_{V_i\in A(T)}V_i$ is not dominated by
$T$. Therefore there exists a vertex of 
$G_{A(T)}$ that is not adjacent to 
any vertex in $T$. Choose such a vertex $x_i$ arbitrarily.

If $d_M(x_i)=0$ then {\it improve} $M$ by adding $x_i$ to
$M$ and removing the $M$-vertex $y$ in $A(x_i)$ (if it exists). This forms
a new PIT $M$, and is an
improvement in the following sense: it
reduces $d_M(x_j)$ where $y$ was added to $T$ because it was in
$N_M(x_j)$ in an earlier step. Truncate $T$ to $\{x_1\}\cup
N_M(x_1)\cup\dots\cup\{x_j\}\cup N_M(x_j)$.

Otherwise add $x_i$ and $N_M(x_i)$  to $T$. Thus $|A(T)|$ increases by
$d_M(x_i)>0$ and $|T|$ increases by $d_M(x_i)+1$, thus maintaining
$|T|\leq2(|A(T)|-1)$. See Figure~\ref{const} for $T$ (the set of
vertices shown) and $A(T)$ (the set of classes enclosed by the
  dotted border). Note that $T$ is a constellation for $A(T)$, whose
  centres are the $x_i$ and whose leaves are the $N_M(x_i)$. 

At each step we either grow $T$ OR reduce $d_M(x_j)$ for some
$j$, UNTIL the current $M$ can be extended to include a vertex of $A$. 
Thus progress can be measured by a {\it signature vector}
$$(d_M(x_1),\ldots,d_M(x_t),\infty)$$
that has length at most $|M|+1\leq m$ since each $N_M(x_i)$ is a nonempty subset
of $M$ and all such sets are mutually disjoint. 
Each step reduces the lexicographic order of the
signature vector. Thus the process terminates, and we succeed in
extending $M$ to a larger PIT and eventually to an IT.
\end{proof}

The drawback of the above procedure is that the number of signature
vectors (and hence the number of steps) could potentially be as large
as $(r-1)^m$, where $G$ is $r$-claw-free. To make this approach into an
efficient algorithm, we make three main modifications. Here the idea
of ``lazy updates'' from~\cite{Annamalai2016, 
  Annamalai2017} is used, which essentially  amounts to performing
updates in ``clusters'' (large subsets of vertices) rather than at the level
of individual 
vertices (that change the quantities $d_M(x_i)$ only one at a
time). These modifications are as follows.

\begin{enumerate}
\item Maintain layers: at each growth step, instead of choosing $x_i$
  arbitrarily, choose it to be a vertex 
  in a class at smallest possible ``distance'' from the root class $A$,
  similar to a breadth-first
  search. Vertices $x_i$ added into classes at the same distance from
  $A$ are in the same {\it layer} (see Figure~\ref{treealg}). 
\item Update in ``clusters'': instead of updating $M$ when a single
  $x_i$ satisfies $d_M(x_i)=0$, update only when at least a
  {\it positive proportion} 
$\mu$ of an entire layer satisfies $d_M(x)=0$. Discard later layers.
\item Rebuild layers in ``clusters'': after an update, add new
  vertices $x_i$ to a layer of $T$ only if doing so would add a $\mu$ 
proportion of that layer. Then discard later layers.
\end{enumerate}
The parameter $\mu$ is a fixed positive constant, chosen to be small
enough with respect to the parameters $\epsilon$ and $1/r$. The extra
$\epsilon$ factor in (2) 
of Theorem~\ref{main} is enough to guarantee that the same proof idea
as for Theorem~\ref{maxtrans} finds an IT in $G$, UNLESS (as in that
proof) at some point in the construction of $T$, 
(a subset $\cB$ that is almost all of) $A(T)$ is dominated by $T$
(plus a certain very
small set of additional vertices, necessary to deal with the error
introduced by the lazy updates in Modifications 2 and 3). This dominating set
will have total size less than 
$(2+\epsilon)(|\cB|-1)$, resulting in Output (2) in
Theorem~\ref{main}. Thus for the rest of this section we will assume
that this never occurs, and in particular that a positive
proportion (depending on $\epsilon$) of the vertices of $G_{A(T)}$ are
not dominated by $T$.   
   
A consequence of {\it maintaining layers} is that the
vertices in $G_{A(T)}$ that do not have a neighbour in $T$ tend to be
``pushed'' towards the   
bottom layer. This results in the set of vertices of $T$ in the bottom
layer having size a positive proportion $\rho$ of $|T|$, where again
$\rho$ depends only on $\epsilon$ and $r$ (see Lemma~\ref{allbigx}). This
implies that the total number of layers is always logarithmic in
$m$ (Lemma~\ref{fewlayers}), since with each new layer the total size of $T$
increases by a fixed factor larger than one.

{\it Updating in clusters} and {\it rebuilding layers in clusters}
allow a different signature 
vector, that measures sizes of layers rather
than degrees of individual vertices $d_M(x_i)$. It has just two
entries per 
layer: the first is essentially -$\lfloor \log x\rfloor$ where $x$ is
the number of vertices $x_i$ in that layer, 
and the second is essentially $\lfloor \log y\rfloor$ where $y$ is the
total size of their neighbourhoods in 
$M$. Updating $M$ in a cluster (Modification 2)  decreases the
value of $y$ for a layer by a positive proportion. Rebuilding a layer
in a cluster (Modification 3) increases the value of $x$ for a
layer by a positive proportion. Hence (with suitably chosen bases for
the logarithms) these updates always {\it decrease} the relevant entry
by an integer amount. Therefore, as in the proof of
Theorem~\ref{maxtrans}, each update decreases the signature vector
lexicographically (Lemma~\ref{lexreduce}). 

Since the length of the signature vector is proportional to the number
of layers, as noted above this is logarithmic in $m$. The entries are
also of the order $\log m$. While this gives a
very significant improvement over the signature vector from the proof of
Theorem~\ref{maxtrans}, it still does not quite give a polynomial
number of signature vectors. However, as in~\cite{Annamalai2016,
  Annamalai2017}  it can be shown with a suitable alteration of the
above definition of signature vector (Definition~\ref{sig}), each
signature vector can be associated with a subset of integers from 1 to
$x$, where $x$ is of order $\log m$. It then follows that the
number of signature vectors, and hence the number of iterations of the
algorithm, is polynomial in $m$ (Lemma~\ref{fewsigs}). 

\section{Preliminaries}\label{prelims}
In this section we formalise the main notions we will need. Much of
the terminology in this section follows that
of~\cite{Annamalai2016, Annamalai2017}. Let $G$ and $(V_1,\dots,V_m)$
be as in  Section~\ref{over}. For the
definitions that follow, consider a PIT $M$ of $G$. 

\begin{definition}\label{blocks}
A vertex $u$ {\it blocks} a vertex $v$ if $u\in M$ and $uv\in E(G)$.
\end{definition}

\begin{definition}\label{immaddable}
A vertex $v$ is {\it immediately addable} with respect to $M$ if $v\notin M$ and it has no vertices in $V(G)$ blocking it. For $W\subseteq V(G)$, $I_M(W)$ denotes the set of vertices in $W$ that are immediately addable with respect to $M$.
\end{definition}

\begin{definition}\label{layer}
A {\it layer} $L$ of $G$ with respect to a PIT $M$ is a pair $(X,Y)$ where:
\begin{enumerate}
\item $X\subseteq V(G)\setminus M$,
\item $X$ is an independent set,
\item $Y\subseteq M$ is the set of blocking vertices of $X$, and
\item every $u\in Y$ is adjacent to exactly one vertex from $X$.
\end{enumerate}
\end{definition} For an example of a layer, refer to Figure~\ref{treealg}. Note that $Y$ is also an independent set since $M$ is an independent set.

\begin{definition}\label{tree}
Let $M$ be a PIT in $G$ and let $A$ be a vertex class in the vertex partition of $G$ that does not contain a vertex in $M$. An {\it alternating tree} $T$ with respect to $M$ and $A$ is a tuple $(L_0,\dots,L_\ell)$ where $\ell\ge0$ such that:
\begin{enumerate}
\item $L_0=(X_0,Y_0)=(\emptyset,\emptyset)$ and $A(Y_0):=A$,
\item $L_i=(X_i,Y_i)$ is a layer for each $1\le i\le\ell$,
\item $X_1\subseteq A$ and $X_i\subseteq \bigcup\limits_{v\in Y_{i-1}} A(v)$ for all $i=2,\dots,\ell$, and
\item $(X_i\cup Y_i)\cap (X_{i'}\cup Y_{i'})=\emptyset$ for all $i,i'\in\{0,\dots,\ell\}$, $|i-i'|>0$.
\end{enumerate}
We call $A$ the {\it root} of $T$. 
\end{definition}
Figure~\ref{treealg} provides an example of an alternating tree.

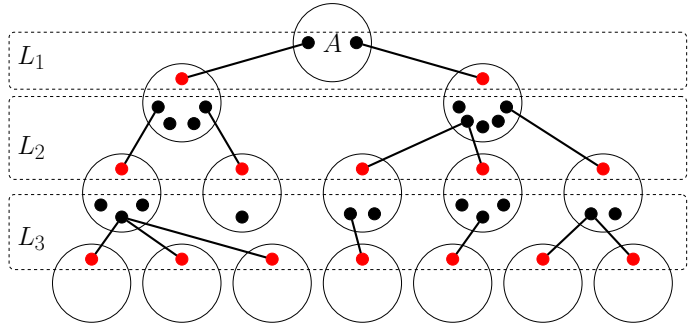
\begin{figure}[!ht]
\centering
\scalebox{.4}{
\begin{tikzpicture}
\node[font=\fontsize{30}{0}] (A) at (1,0) {$A$};  
\drawNode{1}{0}{-180,0}{}{a}
\drawNode{-4}{-2}{-10,-170,-120,-60}{90}{b}
\drawNode{6}{-2}{-170,-130,-90,-50,-10}{90}{c}
\drawNode{-6}{-5}{-150,-90,-30}{90}{d}
\drawNode{-2}{-5}{-90}{90}{e}
\drawNode{2}{-5}{-120,-60}{90}{f}
\drawNode{6}{-5}{-150,-90,-30}{90}{g}
\drawNode{10}{-5}{-120,-60}{90}{h}
\drawNode{-7}{-8}{}{90}{i}
\drawNode{-4}{-8}{}{90}{j}
\drawNode{-1}{-8}{}{90}{k}
\drawNode{2}{-8}{}{90}{l}
\drawNode{5}{-8}{}{90}{m}
\drawNode{8}{-8}{}{90}{n}
\drawNode{11}{-8}{}{90}{o}

\draw[line width=2pt] (Ba--180)--(Rb-90);
\draw[line width=2pt] (Ba-0)--(Rc-90);
\draw[line width=2pt] (Bb--170)--(Rd-90);
\draw[line width=2pt] (Bb--10)--(Re-90);
\draw[line width=2pt] (Bc--130)--(Rf-90);
\draw[line width=2pt] (Bc--130)--(Rg-90);
\draw[line width=2pt] (Bc--10)--(Rh-90);
\draw[line width=2pt] (Bd--90)--(Ri-90);
\draw[line width=2pt] (Bd--90)--(Rj-90);
\draw[line width=2pt] (Bd--90)--(Rk-90);
\draw[line width=2pt] (Bf--120)--(Rl-90);
\draw[line width=2pt] (Bg--90)--(Rm-90);
\draw[line width=2pt] (Bh--120)--(Rn-90);
\draw[line width=2pt] (Bh--120)--(Ro-90);

\node[font=\fontsize{30}{0}] (L1) at (-9,-.5) {$L_1$};   
\node[font=\fontsize{30}{0}] (L2) at (-9,-3.5) {$L_2$};
\node[font=\fontsize{30}{0}] (L3) at (-9,-6.5) {$L_3$};     
\node (K1) at (12.5,-.5) {};   
\node (K2) at (12.5,-3.5) {};
\node (K3) at (12.5,-7) {};
\node[draw, dashed, rounded corners=5pt, inner sep=4pt, fit= (L1) (Ba--180) (Rb-90) (K1)] {};
\node[draw, dashed, rounded corners=5pt, inner sep=4pt, fit=(L2) (Bb--170) (Rd-90) (K2)] {};
\node[draw, dashed, rounded corners=5pt, inner sep=4pt, fit=(L3) (Bd--150) (Ri-90) (K3)] {};
\end{tikzpicture}}
\caption{An alternating tree $T$ arising from a graph $G$ and PIT $M$. The circles are vertex classes, and vertices in the same layer of $T$ are enclosed with a dotted border. Within each layer $L_i$, the vertices in $X_i$ are shown in black and the vertices in $Y_i$ are shown in red.}
\label{treealg}
\end{figure}

Let $T=(L_0,\dots,L_\ell)$ be an alternating tree of $G$ with respect
to a PIT $M$ and root $A$. For each $0\leq j\leq\ell$ we
let $X_{\le j}=\bigcup\limits_{i=0}^{j}X_i$. Similarly, we
define $Y_{\le j}=\bigcup\limits_{i=0}^{j}Y_i$. Note that
$A(Y_{\leq\ell})$ is the set of vertex classes intersecting $T$. It
follows from Definitions~\ref{layer} and~\ref{tree} that the
    subgraph of $G$ induced by $X_{\le\ell}\cup Y_{\le\ell}\setminus
    I_M(X_{\le\ell})$ is a constellation for $A(Y_{\leq\ell})$ (see
    Figures~\ref{const} and~\ref{treealg}).

Our algorithm will make use of fixed constants $\mu$, $U$, and $\rho$
which will be chosen in advance and depend only on the input constants
$r$ and $\epsilon$. The following notion formalises a suitable choice.

\begin{definition}\label{feasible}
Let $r\ge2$ and $\epsilon>0$ be given. We say a tuple $(\mu, U, \rho)$ of positive real numbers is {\it feasible} for $(r,\epsilon)$ if the following hold:
\begin{enumerate}
\item $(2+\epsilon)\left[1-\frac{1}{U}\left(\frac{1+\mu U}{1-\mu}+\rho\right)\right]>\left(\frac{2+\mu(r+2)+\rho(r+1)}{1-\mu}\right)$,
\item $\epsilon\left[1-\frac{1}{U}\left(\frac{1+\mu
    U}{1-\mu}+\rho\right)\right]>\frac{\mu(r+4)+\rho(r+2)}{1-\mu}$, and 
\item $U-\mu\rho>\rho$.
\end{enumerate}
\end{definition}

For example, $(\mu,
U,\rho)=\left(\frac{\epsilon}{10r},\frac{10r}{\epsilon},\frac{\epsilon}{10r}\right)$
is feasible for $(r,\epsilon)$ when $r\ge2$ and ${0<\epsilon<1}$. Thus when $r$ and $\epsilon$ are fixed, the parameters $\mu$, $U$ and $\rho$ may also be taken to be fixed constants. As
mentioned in the introduction, we
make no attempt here to choose the constants to optimize the running
time (see Lemma~\ref{fewsigs}).

The following two definitions will depend on the fixed constants $\mu$ and $U$. Definition~\ref{addabledef} will apply when $X,Y$ forms a partially built layer $L_{\ell+1}$. 

\begin{definition}\label{addabledef}
Let $T=(L_0,\dots,L_\ell)$ be an alternating tree of $G$ with respect to a PIT $M$ and root $A$ and let $X,Y\subseteq V(G)$. A vertex $v\in A(Y_\ell)$ is an {\it addable} vertex for $X$, $Y$, and $T$ if $v\notin Y_{\ell}\cup X\cup Y$, $|A(v)\cap X|<U$, and there does not exist a vertex $u\in X_{\le\ell}\cup Y_{\le\ell}\cup X\cup Y$ such that $uv\in E(G)$.
\end{definition}

\begin{definition}\label{collapsible}
A layer $L_i=(X_i,Y_i)$ is {\it collapsible} if $I_M(X_i)>\mu|X_i|$.
\end{definition}

\section{Algorithms}\label{algs}
Recall that $G$ is an $r$-claw-free graph with respect to vertex
partition $(V_1,\dots,V_m)$. Let $M$ be a PIT in $G$ and let $A$ be a
vertex class in the vertex partition of $G$ that does not contain a
vertex in $M$. The main idea of the algorithm FindITorBD in
Theorem~\ref{main} is to perform a series of modifications to $M$ that
will allow us to augment it with a vertex in $A$. If we are not
successful then we will find a subset of classes (based on an
alternating tree of $G$) that has a small dominating set.

The algorithm FindITorBD is given in Section~\ref{sigs}. In the following subsections, we describe three algorithms that are used by FindITorBD. The first two algorithms, called BuildLayer and SuperposedBuild, are used as subroutines in the third algorithm, called GrowTransversal. GrowTransversal appears as the main subroutine of FindITorBD.

Let
$T=(L_0,\dots,L_\ell)$ be an alternating tree of $G$ with respect to
$M$ and $A$.
Let $\mu$, $U$, and $\rho$ be fixed constants chosen in advance so that $(\mu, U,\rho)$ is feasible for $(r,\epsilon)$ (see Definition~\ref{feasible}).

\subsection{BuildLayer}

BuildLayer is a subroutine in the main algorithm for augmenting $M$ that helps construct new layers for an alternating tree $T=(L_0,\ldots,L_{\ell})$. The function takes as inputs $T$ and some $X,Y\subseteq V(G)$ ($(X,Y)$ can be thought of as a \lq\lq partially built\rq\rq\hspace{1pt} layer). It then creates a new layer $L_{\ell+1}=(X_{\ell+1},Y_{\ell+1})$ by augmenting $X$ and $Y$ and returning the resulting pair $(X,Y)$.

\begin{algorithmic}[1]
\Function{BuildLayer}{$T,X,Y$}
\While{there is a vertex $v\in A(Y_\ell)$ that is addable for $X$, $Y$, and $T$}
	\State $X:=X\cup \{v\}$
	\State $Y:=Y\cup\{u\in M:uv\in E(G)\}$
\EndWhile
\State \Return $(X,Y)$
\EndFunction
\end{algorithmic}

\subsection{SuperposedBuild}

SuperposedBuild is a subroutine in the main algorithm for augmenting $M$ that, after a modification of $M$ occurs in the algorithm, modifies $T$ so that it remains an alternating tree with respect to the new PIT $M$. SuperposedBuild possibly augments $T$ by adding some vertices that are no longer blocked due to the modification of $M$. The function takes as inputs the current $T$ and its number of layers $\ell$. It then performs some tests on the layers of $T$, to see if any $X_i$ could be substantially enlarged, and returns a possibly modified alternating tree to replace $T$ for the next iteration of the main algorithm as well as the number of layers in the returned alternating tree.

\begin{algorithmic}[1]
\Function{SuperposedBuild}{$T,\ell$}
\State $i:=1$
\While{$i\le\ell$}
	\State $(X'_{i},Y'_{i}):=\text{BuildLayer}((L_0,\dots,L_{i-1}),X_{i},Y_{i})$
	\If{$|X'_{i}|\ge(1+\mu)|X_{i}|$}
		\State $(X_{i},Y_{i}):=(X'_{i},Y'_{i})$
		\State $L_i:=(X_i,Y_i)$
		\State $T:=(L_0,\dots,L_i)$
		\State $\ell:=i$
	\EndIf
	\State $i:=i+1$
\EndWhile
\State \Return $(T,\ell)$
\EndFunction
\end{algorithmic}

\subsection{GrowTransversal}
GrowTransversal is the main algorithm for augmenting $M$. It takes as
inputs $M$ and $A$ and performs a series of modifications to $M$ until
either a vertex in $A$ is added to $M$ or an iteration constructs a
layer $L_i$ with too small an $X_i$ relative to the size of $T$. When
GrowTransversal terminates, it returns $M$, $T$, and a flag variable
$x$ as $(M,T,x)$. The variable $x$ is set to $1$ if GrowTransversal
terminates due to an iteration constructing a layer $L_i$ with at most
$\rho|Y_{\le i-1}|$ vertices in $X_i$ and is set to $0$ if
GrowTransversal successfully augments $M$ with a vertex in $A$. If
GrowTransversal returns $(M,T,1)$, we will show in the next section
that $T$ contains a subset $\mathcal B$ of vertex classes whose
vertices are dominated by a set of fewer than ${(2+\epsilon)|{\mathcal
    B}|}$ vertices with the properties stated in Theorem~\ref{main}. 

\begin{algorithmic}[1]
\Function{GrowTransversal}{$M,A$}
\State $L_0:=(X_0,Y_0):=(\emptyset, \emptyset)$
\State $A(Y_0):=A$
\State $L_0:=(X_0,Y_0)$
\State $T:=(L_0)$
\State $\ell:=0$
\While{$A\cap M=\emptyset$}
	\State $(X_{\ell+1},Y_{\ell+1}):=\text{BuildLayer}(T, \emptyset, \emptyset)$
	\State $L_{\ell+1}:=(X_{\ell+1},Y_{\ell+1})$
	\State $T:=(L_0,\dots,L_\ell,L_{\ell+1})$
	
	\If{$|X_{\ell+1}|\le\rho|Y_{\le\ell}|$}
		\State \Return $(M,T,1)$ and terminate
	\Else
		\State $\ell:=\ell+1$

		\While{$|I_M(X_\ell)|>\mu|X_\ell|$}
			\If{$\ell=1$}
				\State Augment $M$ with a vertex from $I_M(X_1)$ 
				\State \Return $(M,T,0)$ and terminate
			\Else
				\ForAll{$w\in Y_{\ell-1}$ such that $I_M(X_\ell)\cap A(w)\neq\emptyset$}
					\State $M:=(M\setminus\{w\})\cup\{u\}$ for some arbitrary ${u\in I_M(X_\ell)\cap A(w)}$
					\State $Y_{\ell-1}:=Y_{\ell-1}\setminus\{w\}$
				\EndFor
			\EndIf
			\State $T':=(L_0,\dots,L_{\ell-1})$
			\State $\ell':=\ell-1$
			\State $(T,\ell):=\text{SuperposedBuild}(T',\ell')$			
		\EndWhile
	\EndIf
\EndWhile
\EndFunction
\end{algorithmic}

The GrowTransversal algorithm begins by initializing the alternating tree $T$ with respect to $M$ and its number of layers $\ell$. While $A$ does not contain a vertex in the PIT $M$, the algorithm repeats a building layer operation (line 8) followed by a loop of {\it collapsing} operations (lines 15-28) that modify $M$ when enough immediately addable vertices with respect to $M$ are present in the newly constructed layer. Figure~\ref{collapsefig} shows an example of one collapse operation (lines 20-23).  

\begin{figure}[!ht]
\centering
\subfloat[]{\scalebox{.249}{
\begin{tikzpicture}
\drawNode{-4}{-2}{-170,-90,-10}{90}{b}
\drawNode{6}{-2}{-170,-120,-60,-10}{90}{c}
\drawNode{-6}{-5}{-170,-90,-10}{90}{d}
\drawNode{-2}{-5}{-170,-10}{90}{e}
\drawNode{2}{-5}{-170,-90}{90}{f}
\drawNode{6}{-5}{-90}{90}{g}
\drawNode{10}{-5}{-90}{90}{h}
\drawNode{-7}{-9}{}{90}{i}
\drawNode{-4}{-9}{}{90}{j}
\drawNode{-1}{-9}{}{90}{k}
\drawNode{2}{-9}{}{90}{l}
\drawNode{6}{-9}{}{90}{m}
\drawNode{9}{-9}{}{90}{n}
\drawNode{12}{-9}{}{90}{o}

\draw[line width=2pt] (-2,-.5)--(Rb-90);
\draw[line width=2pt] (4,-0.5)--(Rc-90);
\draw[line width=2pt] (Bb--170)--(Rd-90);
\draw[line width=2pt] (Bb--10)--(Re-90);
\draw[line width=2pt] (Bc--120)--(Rf-90);
\draw[line width=2pt] (Bc--130)--(Rg-90);
\draw[line width=2pt] (Bc--10)--(Rh-90);
\draw[line width=2pt] (Bd--90)--(Ri-90);
\draw[line width=2pt] (Bd--90)--(Rj-90);
\draw[line width=2pt] (Bd--90)--(Rk-90);
\draw[line width=2pt] (Bf--90)--(Rl-90);
\draw[line width=2pt] (Bg--90)--(Rm-90);
\draw[line width=2pt] (Bh--90)--(Rn-90);
\draw[line width=2pt] (Bh--90)--(Ro-90);

\node[draw, dashed, rounded corners=5pt, inner sep=4pt, fit= (Bd--170) (Bf--170)] {};
\end{tikzpicture}}}\hspace{1mm}
\subfloat[]{\scalebox{.249}{
\begin{tikzpicture}
\drawNode{-4}{-2}{-170,-90,-10}{90}{b}
\drawNode{6}{-2}{-170,-120,-60,-10}{90}{c}
\drawNode{-6}{-5}{-170,-90}{-10}{d}
\drawNode{-2}{-5}{-170}{-10}{e}
\drawNode{2}{-5}{-90}{-170}{f}
\drawNode{6}{-5}{-90}{90}{g}
\drawNode{10}{-5}{-90}{90}{h}
\drawNode{-7}{-9}{}{90}{i}
\drawNode{-4}{-9}{}{90}{j}
\drawNode{-1}{-9}{}{90}{k}
\drawNode{2}{-9}{}{90}{l}
\drawNode{6}{-9}{}{90}{m}
\drawNode{9}{-9}{}{90}{n}
\drawNode{12}{-9}{}{90}{o}

\draw[line width=2pt] (-2,-.5)--(Rb-90);
\draw[line width=2pt] (4,-0.5)--(Rc-90);
\draw[line width=2pt] (Bc--120)--(Rg-90);
\draw[line width=2pt] (Bc--10)--(Rh-90);
\draw[line width=2pt] (Bd--90)--(Ri-90);
\draw[line width=2pt] (Bd--90)--(Rj-90);
\draw[line width=2pt] (Bd--90)--(Rk-90);
\draw[line width=2pt] (Bf--90)--(Rl-90);
\draw[line width=2pt] (Bg--90)--(Rm-90);
\draw[line width=2pt] (Bh--90)--(Rn-90);
\draw[line width=2pt] (Bh--90)--(Ro-90);
\end{tikzpicture}}}
\subfloat[]{\scalebox{.249}{
\begin{tikzpicture}
\drawNode{-4}{-2}{-170,-90,-10}{90}{b}
\drawNode{6}{-2}{-170,-120,-60,-10}{90}{c}
\hideNode{-6}{-5}{-170,-90}{-10}{d}
\hideNode{-2}{-5}{-170}{-10}{e}
\hideNode{2}{-5}{-90}{-170}{f}
\drawNode{6}{-5}{}{90}{g}
\drawNode{10}{-5}{}{90}{h}
\hideNode{-7}{-9}{}{90}{i}
\hideNode{-4}{-9}{}{90}{j}
\hideNode{-1}{-9}{}{90}{k}
\hideNode{2}{-9}{}{90}{l}
\hideNode{6}{-9}{}{90}{m}
\hideNode{9}{-9}{}{90}{n}
\hideNode{12}{-9}{}{90}{o}

\draw[line width=2pt] (-2,-.5)--(Rb-90);
\draw[line width=2pt] (4,-0.5)--(Rc-90);
\draw[line width=2pt] (Bc--120)--(Rg-90);
\draw[line width=2pt] (Bc--10)--(Rh-90);
\end{tikzpicture}}}
\caption{An example of collapsing layer $L_\ell$. In (a), a substantial set $I_M(X_{\ell})$ of immediately addable vertices is found in $X_{\ell}$ (line 15). In (b), the vertex $w$ of $M$ in each class that intersects $I_M(X_{\ell})$ is replaced by some $u\in I_M(X_{\ell})$ in that same class (line 21). In (c), each $w\in M$ that was replaced in line 21 disappears from $Y_{\ell-1}$ (line 22) and the entire last layer $(X_{\ell},Y_{\ell})$ is removed (line 25).}
\label{collapsefig}
\end{figure}
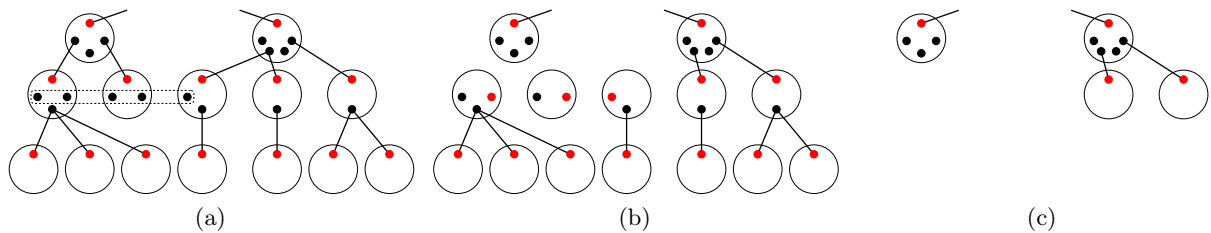

Performing one collapse operation can result in more vertices becoming immediately addable with respect to the new $M$ in earlier layers. Hence one collapse operation can lead to a cascade of collapse operations being performed on layers of $T$. Also, any collapse operation can modify $M$ in such a way that SuperposedBuild can modify a layer $(X_i,Y_i)$ of $T$ to contain a significantly larger number of vertices ($|X'_i|\ge(1+\mu)|X_i|$ and so replacing $(X_i,Y_i)$ by $(X'_i,Y'_i)$ adds at least $\mu|X_i|$ vertices to the layer). These modifications to a layer (which is the final layer of the tree returned by SuperposedBuild) may still leave it collapsible, so more collapse operations may be possible if SuperposedBuild modifies a layer of $T$. Thus, for the $T$ at the conclusion of line 28 to remain an alternating tree with respect to $M$ after all these modifications are made to $M$, GrowTransversal removes all layers of $T$ constructed after the earliest layer that contains a vertex class where $M$ is modified (lines 25-27). This leaves the resulting $T$ as an alternating tree with respect to the modified $M$. 

Due to these operations, at the beginning and end of each iteration of the main loop of GrowTransversal (which starts in line 7), $M$ remains a PIT and $T$ remains an alternating tree with respect to $M$. Also, the modifications to $M$ do not change the set of vertex classes containing vertices in $M$ (lines 20-23) unless $L_1$ is collapsible. In this case, a vertex in $I_M(X_1)$, which is therefore in $A$, is added to $M$ (lines 15-18) and so $A$ is added to the set of vertex classes covered by the PIT $M$. As this is the goal of GrowTransversal, the algorithm returns $(M,T,0)$ and terminates (see line 18).

The algorithm also terminates if, at the start of some iteration,
BuildLayer produces a layer $L_{\ell+1}$ whose $X_{\ell+1}$ is
not sufficiently large with respect to the number of vertex classes
already in $T$, i.e. $|X_{\ell+1}|\le\rho|Y_{\le\ell}|$ (see lines
11-12). If GrowTransversal terminates because of this, GrowTransversal
returns $(M,T,1)$, which distinguishes this case from when the
algorithm terminates because it successfully augments $M$ to include a
vertex of $A$. We will show in the next section that given an
alternating tree $T=(L_0,\dots, L_{\ell+1})$ with respect to $M$ and
$A$ such that $|X_{\ell+1}|\le\rho|Y_{\le\ell}|$, there exists some
set $\cB$ of the vertex classes in $T$ such that $G_\cB$ is
dominated by a set $D$ of vertices with the properties stated in
outcome (2) of Theorem~\ref{main}. Our analysis will provide a specific $\cB$
and its corresponding $D$ given that GrowTransversal returns
$(M,T,1)$.

\section{Analysis}\label{analysis}
The main result of this section (Lemma~\ref{allbigx}) shows that if
GrowTransversal terminates because BuildLayer constructs a layer
$L_{\ell+1}$ with $|X_{\ell+1}|\leq\rho|Y_{\leq\ell}|$ (line 11), then
we can find a set $\cB$ of vertex classes and a set $D$ of vertices
that satisfy the conditions of outcome (2) of
Theorem~\ref{main}. Otherwise GrowTransversal will succeed in
augmenting the current PIT $M$ with a vertex of $A$.

As discussed in the Overview (Section~\ref{over}), we will bound the
total number of steps taken by 
GrowTransversal using a {\it signature vector} (see
Definition~\ref{sig}) that is defined in
terms of logarithmic functions of the sizes of the layers of $T$. The last
lemma of this section (Lemma~\ref{fewlayers}) establishes that the
number of layers in $T$ (and hence length of any 
signature vector) is bounded by a logarithmic function of the number
$m$ of vertex classes. This fact will be key to the proof given in
Section~\ref{sigs} that the total number of signature vectors (and hence the
number of steps taken by GrowTransversal) is polynomial in $m$. 

Recall from Section~\ref{prelims} that we can assume $G$ is an
$r$-claw-free graph with respect to vertex partition
$(V_1,\dots,V_m)$, each vertex class $V_i$ is an independent set of
vertices, and $r\ge2$ and $\epsilon>0$ are fixed. Also, recall from
Section~\ref{algs} that $\mu$, $U$, and $\rho$ are fixed constants
such that $(\mu,U,\rho)$ is feasible for $(r,\epsilon)$. 

For concreteness, we may also assume that the vertices of the graph
$G$ have been assigned some arbitrary but fixed ordering, and that
vertices are processed by our algorithms subject to this ordering. For
example, we assume that
$\text{BuildLayer}(T,X,Y)$ adds vertices to $X$ and $Y$ in order,
i.e. during an iteration of the while loop in 
BuildLayer, the addable vertex with the lowest index in the ordering
is the vertex chosen to be $v$ in line 3. Similarly we may assume that the
vertex $u$ in line 21 of GrowTransversal is chosen to be the vertex in
$I_M(X_\ell)\cap A(w)$ with the lowest index in the ordering. The
proof of Lemma~\ref{allbigx} will also use this convention.

We begin by establishing two preliminary results on basic properties of
the alternating tree constructed in GrowTransversal.

\begin{lemma}\label{nocollapse}
Let $T=(L_0,\dots, L_\ell)$ and $M$ be the alternating tree and partial independent transversal at the beginning of some iteration of the while loop in line $7$ of {\rm GrowTransversal}. Then none of the layers $L_0,\dots, L_\ell$ are collapsible. Hence $|Y_i|\ge(1-\mu)|X_i|$ for each $i\in\{1,\dots,\ell\}$.
\end{lemma}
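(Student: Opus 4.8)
The plan is to argue by induction on the number of iterations of the main while loop in line 7, maintaining the invariant that at the start of each such iteration no layer of $T$ is collapsible. For the base case, the first time line 7 is reached $T=(L_0)$ consists only of the empty layer $L_0=(\emptyset,\emptyset)$, which is not collapsible since $I_M(X_0)=\emptyset$ and $\mu|X_0|=0$; vacuously there is nothing to check. For the inductive step, assume the invariant holds at the start of some iteration; I must show it still holds at the start of the next iteration. Within one iteration, line 8 builds a fresh layer $L_{\ell+1}$ via BuildLayer; if line 12 returns we are done (the algorithm terminates), so assume we pass to line 14 and enter the inner while loop of lines 15--28. While this inner loop runs, $|I_M(X_\ell)|>\mu|X_\ell|$ holds for the \emph{current} last layer, i.e. the last layer may well be collapsible during an iteration — but the point is to show that \emph{when control returns to line 7} it is not.

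The key observation is what the body of the inner loop does: in lines 19--24 it modifies $M$ only in vertex classes $A(w)$ for $w\in Y_{\ell-1}$ that meet $I_M(X_\ell)$, then in lines 25--27 it discards every layer after $L_{\ell-1}$, and in line 28 it calls SuperposedBuild on $T'=(L_0,\dots,L_{\ell-1})$. So after one pass of the inner loop, the returned tree has last layer equal to the (possibly BuildLayer-enlarged) final layer produced inside SuperposedBuild, and SuperposedBuild only \emph{accepts} a rebuilt layer $(X'_i,Y'_i)$ when $|X'_i|\ge(1+\mu)|X_i|$. The inner while loop then re-checks its condition $|I_M(X_\ell)|>\mu|X_\ell|$ against this new last layer and either collapses again or exits. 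Therefore the inner loop only exits — returning control toward line 7 — when the current last layer $L_\ell$ satisfies $|I_M(X_\ell)|\le\mu|X_\ell|$, i.e. $L_\ell$ is not collapsible. For all the earlier layers $L_0,\dots,L_{\ell-1}$ I need to argue they are not collapsible with respect to the final $M$: here the crucial fact is that whenever $M$ was modified in a class of some layer $L_{i}$ (namely $i=\ell-1$ at the time of that modification, with that layer subsequently becoming the $(\ell-1)$-indexed survivor after truncation), all layers above it were discarded, and SuperposedBuild's test $|X'_j|\ge(1+\mu)|X_j|$ means that any layer it \emph{modifies} becomes the new last layer (line 9 sets $\ell:=i$), which will then be re-examined by the inner while loop's guard on the next pass. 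So no layer other than the current last one can have been left in a collapsible state: either it was never touched since it was last verified non-collapsible (by the inductive hypothesis or by a previous guard check), or it was enlarged by SuperposedBuild, in which case it became the last layer and got re-tested. Iterating this reasoning over all passes of the inner loop gives the first claim.

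For the second claim, fix $i\in\{1,\dots,\ell\}$. Since $L_i=(X_i,Y_i)$ is a layer (Definition~\ref{layer}), $Y_i$ is the set of blocking vertices of $X_i$, and every vertex of $X_i$ that is \emph{not} immediately addable has at least one blocking vertex in $Y_i$ by Definition~\ref{immaddable}; conversely, by condition~(4) of Definition~\ref{layer} each $u\in Y_i$ is adjacent to exactly one vertex of $X_i$, so the map sending each non-immediately-addable vertex of $X_i$ to one of its blockers in $Y_i$ is injective. Hence $|Y_i|\ge |X_i\setminus I_M(X_i)| = |X_i|-|I_M(X_i)|$. Since $L_i$ is not collapsible, $|I_M(X_i)|\le\mu|X_i|$, so $|Y_i|\ge(1-\mu)|X_i|$, as required.

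The main obstacle is the bookkeeping in the inductive step: one has to track carefully that the cascade of collapses and SuperposedBuild calls inside lines 15--28 never leaves an \emph{earlier} layer collapsible by the time the inner loop's guard is finally false. The cleanest way to handle this is to observe that SuperposedBuild's acceptance criterion forces any modified layer to become the new last layer (so it is re-scrutinized by the guard), and that truncation in lines 25--27 removes precisely the layers that could have been destabilized by the modification to $M$; everything below the modified class is untouched by $M$-changes and so remains non-collapsible by the previous guard check or the inductive hypothesis. I would make this precise by a secondary induction on passes through the inner while loop, with the invariant ``all layers strictly before the current last layer are non-collapsible with respect to the current $M$.''
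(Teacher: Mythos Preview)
Your proposal is correct and follows essentially the same approach as the paper's proof: induction on iterations of the outer while loop, with the key observations that (i) the inner loop in lines 15--28 only exits when its last layer fails the collapsibility guard, and (ii) all strictly earlier layers survive unchanged, so the inductive hypothesis carries over. Your suggested secondary induction on passes through the inner loop is a slightly more explicit way to organise the same bookkeeping that the paper handles in one paragraph; the derivation of $|Y_i|\ge(1-\mu)|X_i|$ via the injection from $X_i\setminus I_M(X_i)$ into $Y_i$ is exactly the paper's argument phrased dually.
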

\begin{proof}[Proof:]
Suppose the statement holds at the beginning of the current iteration. In lines $8$ and $9$ of GrowTransversal, $L_{\ell+1}$ is constructed. If GrowTransversal does not terminate and $L_{\ell+1}$ is not collapsible, then the claim follows for the beginning of the next iteration since none of the earlier layers were modified in the current iteration. 

If GrowTransversal does not terminate and $L_{\ell+1}$ is collapsible, then let ${T:=(L_0,\dots,L_k)}$ be the alternating tree with respect to $M$ that results from the loop of collapsing operations where $k\le\ell$ (i.e. $T$ is the alternating tree after lines 15-28 are completed). Unless the algorithm terminates, $k\ge1$ (lines 15-18). 

Each time a collapse operation is performed (lines 20-23), the number of layers in $T$ is reduced (line 25). Also, if SuperposedBuild modifies $T$ in line 27, the number of layers of $T$ is reduced or stays the same. Thus, layers $L_0,\dots,L_{k-1}$ are not changed by the loop of collapsing operations and so remain unchanged throughout the current iteration. Layer $L_k$ may be modified in lines 21, 22, and 27. However, $L_k$ cannot be collapsible since it is the final layer in $T$ after the loop of collapsing operations terminates. Hence none of the layers in an alternating tree are collapsible at the end of an iteration of the while loop in line 7 (unless the algorithm terminates during the iteration).

Since the claim holds for the first iteration of the while loop in line 7, the statement follows by induction on the number of iterations of this loop in GrowTransversal. As $L_i$ is a layer for each $i\in\{1,\dots,\ell\}$, $Y_i$ contains all blocking vertices of all vertices in $X_i$ and every vertex in $Y_i$ is adjacent to exactly one vertex in $X_i$ by construction. Thus, there are at most $|Y_i|$ vertices in $X_i\setminus I_M(X_i)$ and at most $\mu|X_i|$ vertices in $I_M(X_i)$ for each layer $i=1,\dots,\ell$. Hence $|X_i|\le |Y_i|+\mu|X_i|$ for each $i\in\{1,\dots,\ell\}$.\end{proof}

\begin{lemma}\label{nosuper}
Let $T=(L_0,\dots, L_\ell)$ and $M$ be the alternating tree and partial independent transversal at the beginning of some iteration of the while loop in line $7$ of {\rm GrowTransversal}. Then for each $i\in\{1,\dots,\ell\}$, $$ (X'_i,Y'_i):=\text{BuildLayer}((L_0,\dots,L_{i-1}),X_i,Y_i)$$ satisfies $|X'_i|<(1+\mu)|X_i|$.
\end{lemma}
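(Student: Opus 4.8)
The plan is to prove Lemma~\ref{nosuper} by induction on the number of completed iterations of the main while loop of GrowTransversal (line 7), mirroring the proof of Lemma~\ref{nocollapse}. I would strengthen the inductive hypothesis to also assert that $X_i\neq\emptyset$ for every $i\in\{1,\dots,\ell\}$ at the start of each iteration: this extra clause is what upgrades the equality $|X_i'|=|X_i|$ for an already-maximal layer to the \emph{strict} inequality $|X_i'|<(1+\mu)|X_i|$ that the lemma demands. The base case is vacuous, since $\ell=0$ before the first iteration.

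The key step is a structural observation about how layers are created and altered. Every layer $L_i=(X_i,Y_i)$ with $i\ge1$ that ever occurs in $T$ was, at the moment it was last set, the output of a call $\text{BuildLayer}((L_0,\dots,L_{i-1}),\cdot,\cdot)$ --- either in lines 8--9 of GrowTransversal or within SuperposedBuild. Since the while loop of BuildLayer halts exactly when no addable vertex remains, and whether $\text{BuildLayer}((L_0,\dots,L_{i-1}),X_i,Y_i)$ adds a vertex depends only on $L_0,\dots,L_{i-1}$, on $X_i$ and $Y_i$, on the current PIT $M$ (through the blocking sets absorbed into $Y$), and on $E(G)$, it follows that re-running $\text{BuildLayer}((L_0,\dots,L_{i-1}),X_i,Y_i)$ returns $(X_i,Y_i)$ unchanged so long as none of $L_0,\dots,L_{i-1}$ and $M$ has changed since $L_i$ was set. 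Complementing this, I would note that SuperposedBuild \emph{certifies} the lemma for the tree it returns: it modifies at most one layer $L_{i^*}$ (the first with $|X'_{i^*}|\ge(1+\mu)|X_{i^*}|$), replacing it by a fresh BuildLayer output and discarding all later layers; for each retained $i<i^*$ (and for every $i$ if it expands nothing) it has explicitly tested $|X'_i|<(1+\mu)|X_i|$, and since nothing below $i^*$ is touched afterwards this test is with respect to the final $L_0,\dots,L_{i-1}$ and $M$; and $X_{i^*}$ is nonempty in the output because $|X_{i^*}|\ge(1+\mu)\cdot(\text{old }|X_{i^*}|)$.

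The remaining ingredient is that between two consecutive visits to line 7 the only change to $M$ or to a lower layer happens in the collapse step (lines 20--23), which alters $M$ and removes vertices from $Y_{\ell-1}$ only, and which is immediately followed by lines 25--27 discarding $L_\ell$ and invoking SuperposedBuild on $(L_0,\dots,L_{\ell-1})$. Using pairwise disjointness of the layers (condition 4 of Definition~\ref{tree}) one checks this collapse leaves $L_0,\dots,L_{\ell-2}$ and $X_{\ell-1}$ untouched, so the tree handed to that SuperposedBuild still has all $X$'s nonempty. With this in hand the inductive step is short. Fix an iteration beginning with tree $(L_0,\dots,L_\ell)$ and PIT $M$ satisfying the hypothesis, and assume GrowTransversal does not terminate in it. If the inner collapse loop never runs, then at line 7 the layers $L_1,\dots,L_\ell$ are exactly those from the start of the iteration --- so the hypothesis transfers verbatim, as $\text{BuildLayer}((L_0,\dots,L_{i-1}),X_i,Y_i)$ for $i\le\ell$ never references the newly appended layer --- while the new top layer $L_{\ell+1}$ is a BuildLayer output (line 8) that is nonempty and maximal, since avoiding termination at line 11 forces $|X_{\ell+1}|>\rho|Y_{\le\ell}|\ge0$. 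If the collapse loop does run, then the tree and PIT at line 7 are precisely those returned by the last SuperposedBuild call inside it (the loop exits on a test that changes nothing), and the conclusion --- including nonemptiness of all the $X_i$ --- is exactly its certification property above.

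The step I expect to be the main obstacle is the bookkeeping behind the second and third paragraphs: one must be careful that a BuildLayer output is maximal only relative to an \emph{unchanged} $M$, and then verify that every modification of $M$ (which occurs solely in the collapse step) is, before control returns to line 7, followed by a SuperposedBuild call that re-examines every retained layer --- keeping precise track of which layers get discarded at each point (a collapse at top index $\ell$ retains $L_0,\dots,L_{\ell-1}$; SuperposedBuild retains $L_0,\dots,L_{i^*}$) so that these re-tests are both valid and carried out against the final data. Everything else is routine verification.
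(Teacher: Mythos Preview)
Your proposal is correct and follows essentially the same route as the paper's proof: both arguments rest on the observations that (a) a layer fresh from BuildLayer is maximal with respect to the current $M$ and lower layers, and (b) whenever $M$ is modified by a collapse, SuperposedBuild subsequently re-tests every surviving layer for the $(1+\mu)$-growth condition before control returns to line~7. Your explicit induction and the added nonemptiness clause make the strict inequality $|X_i'|<(1+\mu)|X_i|$ rigorous in the case $X_i'=X_i$; the paper's proof handles this point only implicitly.
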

\begin{proof}[Proof:]
Consider layer $L_i$ at the beginning of the current iteration for
some $1\leq i\leq\ell$. During the iteration that $L_i$ was
constructed by either BuildLayer in line 8 or SuperposedBuild in line
27, an (additional) application of SuperposedBuild could not increase
the size of $X_i$ (both functions created $X_i$ to be as large as
possible with respect to $M$ in that iteration).  

Suppose no layer built during the iterations between when $L_i$ was constructed and the current iteration is collapsible, i.e. the condition of line 15 is not met between the iteration $L_i$ was constructed and the current iteration of GrowTransversal. Then, since $M$ is not changed during the intervening iterations, SuperposedBuild does not increase the size of $X_i$. 

Now suppose some layer built during the intervening iterations was collapsible. Note that the index of the collapsible layer must be greater than $i$ as otherwise $L_i$ would be discarded (lines 25-27). Thus, for each $j\ge i+1$ such that $L_j$ is collapsible, SuperposedBuild tries to augment $X_i$ by at least a $\mu$ proportion of its size. However, since $L_i$ is a layer in $T$ at the start of the current iteration, SuperposedBuild does not succeed in changing $X_i$. Hence BuildLayer$((L_0,\dots,L_{i-1}),X_i,Y_i)$ does not increase the number of vertices in $X_i$ by $\mu|X_i|$.
\end{proof}

We are now ready to prove the main result of this section.

\begin{lemma}\label{allbigx}
Assume $(\mu,U,\rho)$ is feasible for $(r,\epsilon)$. Let
$T=(L_0,\dots, L_\ell)$ and $M$ be the alternating tree with root
  vertex $A$ and partial independent transversal at the beginning of
some iteration of the while loop in line $7$ of {\rm
  GrowTransversal}. Then either 
\begin{enumerate}
\item when $L_{\ell+1}$ is constructed in line 8 of
{\rm GrowTransversal} we have $|X_{\ell+1}|>\rho|Y_{\le\ell}|$, or
\item {\rm GrowTransversal} terminates in line 12, and there exists a subset $\mathcal B$ of the set $\cB_0=A(Y_{\le\ell})$ such 
that $G_{\cB}$ is dominated by a set $D$ of vertices in $G$ of size less than
$(2+\epsilon)(|{\mathcal B}|-1)$. Moreover $K=G[X_{\leq\ell}\cup
  Y_{\leq\ell}\setminus I_M(X_{\leq\ell})]$ is a constellation for
$\cB_0$ and $D$ contains $V(K)$, where $|D\setminus V(K)|<\epsilon(|\cB|-1)$. 
\end{enumerate}
\end{lemma}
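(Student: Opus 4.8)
The plan is to prove the dichotomy directly. Suppose outcome~(1) fails, i.e.\ the layer $L_{\ell+1}$ produced by BuildLayer in line~8 of GrowTransversal has $|X_{\ell+1}|\le\rho|Y_{\le\ell}|$; then the test in line~11 succeeds, GrowTransversal returns $(M,T,1)$ in line~12, and it remains to exhibit $\cB$ and $D$ as in outcome~(2). I would take $\cB_0=A(Y_{\le\ell})$ and $K=G[X_{\le\ell}\cup Y_{\le\ell}\setminus I_M(X_{\le\ell})]$, which by the remark following Definition~\ref{tree} is a constellation for $\cB_0$; since $Y_{\le\ell}\subseteq M$ meets every class of $\cB_0$ other than $A$ in exactly one vertex we have $|Y_{\le\ell}|=|\cB_0|-1$, and by the note after Definition~\ref{constell}, $|V(K)|\le2(|\cB_0|-1)$. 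One checks that $\ell=0$ forces outcome~(1), and that for $\ell\ge1$ one has $X_1\ne\emptyset$ and hence (by Lemma~\ref{nocollapse}) $|\cB_0|\ge2$, so $|\cB_0|-1\ge1$. I would then set $D=V(K)\cup C$ for a small correction set $C$ and $\cB=\cB_0\setminus\cB_{\mathrm{bad}}$ for a small set $\cB_{\mathrm{bad}}$ of classes, both defined below.

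The crux is the domination claim, that $D$ dominates $G_{\cB}$ in $G$. Since BuildLayer halts only when no addable vertex remains, the final call (for $L_{\ell+1}$) guarantees that every $v$ in a class of $A(Y_\ell)$ lies in $Y_\ell\cup X_{\ell+1}\cup Y_{\ell+1}$, or its class has $\ge U$ vertices in $X_{\ell+1}$, or $v$ has a neighbour in $X_{\le\ell}\cup Y_{\le\ell}\cup X_{\ell+1}\cup Y_{\ell+1}$. For a vertex $v$ whose class meets $Y_{i-1}$ for some $1\le i\le\ell$ (so that $v$ could have been placed in $X_i$), the analogous guarantee from the iteration in which $L_i$ was built is no longer valid verbatim, since $M$ has been modified since; here I would invoke Lemma~\ref{nosuper}, which gives that $(X'_i,Y'_i):=\mathrm{BuildLayer}((L_0,\dots,L_{i-1}),X_i,Y_i)$ with the current $M$ satisfies $|X'_i|<(1+\mu)|X_i|$ and leaves no addable vertex, so the same trichotomy holds with $X'_i,Y'_i$ in place of $X_i,Y_i$. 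Accordingly I would let $\cB_{\mathrm{bad}}$ be the classes that are ``saturated'' (having $\ge U$ vertices in $X_{\ell+1}$, or in some original $X_i$ with $i\le\ell$) together with the classes meeting some $X'_i\setminus X_i$ (this second family also absorbing any class that becomes saturated only in the rebuilt $X'_i$), and I would let $C$ consist of $I_M(X_{\le\ell})$, $X_{\ell+1}\cup Y_{\ell+1}$, the increments $\bigcup_{i=1}^\ell\big((X'_i\setminus X_i)\cup(Y'_i\setminus Y_i)\big)$, and one neighbour of each vertex of $I_M(X_{\le\ell})$ and of each immediately addable vertex occurring in $X_{\ell+1}$ or in some $X'_i$ (such neighbours exist since $G$ has no isolated vertex, and are needed because these vertices are not dominated by $V(K)$). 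With $D=V(K)\cup C$ one verifies, using the trichotomies above applied at the level of each class of $\cB$, that every vertex of $G_{\cB}$ has a neighbour in $D$.

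It remains to check the sizes, which is exactly where the three inequalities of Definition~\ref{feasible} are used. Lemma~\ref{nocollapse} gives $|X_i|\le\tfrac{1}{1-\mu}|Y_i|$, hence $|X_{\le\ell}|\le\tfrac{1}{1-\mu}(|\cB_0|-1)$; $r$-claw-freeness bounds the number of blocking vertices of any vertex by $r-1$, which controls $|Y_{\ell+1}|$ and each $|Y'_i\setminus Y_i|$; Lemma~\ref{nosuper} gives $|X'_i\setminus X_i|<\mu|X_i|$; and by assumption $|X_{\ell+1}|\le\rho|Y_{\le\ell}|=\rho(|\cB_0|-1)$. Counting the saturated classes against $|X_{\le\ell}|+|X_{\ell+1}|$ via the cap $U$, and the remaining bad classes against $\sum_i|X'_i\setminus X_i|$, gives $|\cB_{\mathrm{bad}}|\le(|\cB_0|-1)\cdot\tfrac1U\big(\tfrac{1+\mu U}{1-\mu}+\rho\big)$, i.e.\ $|\cB|-1\ge(|\cB_0|-1)\big[1-\tfrac1U\big(\tfrac{1+\mu U}{1-\mu}+\rho\big)\big]$; the same estimates give $|C|\le(|\cB_0|-1)\cdot\tfrac{\mu(r+4)+\rho(r+2)}{1-\mu}$, and hence $|D|\le(|\cB_0|-1)\cdot\tfrac{2+\mu(r+2)+\rho(r+1)}{1-\mu}$. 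Since $|\cB_0|-1\ge1$, inequalities~(1) and~(2) of Definition~\ref{feasible} now give exactly $|D|<(2+\epsilon)(|\cB|-1)$ and $|D\setminus V(K)|<\epsilon(|\cB|-1)$. Inequality~(3), together with the positivity of the bracket $1-\tfrac1U(\tfrac{1+\mu U}{1-\mu}+\rho)$ (which follows from~(2)), is what prevents $\cB$ from degenerating in the small cases, where in fact one checks directly that $\cB_{\mathrm{bad}}=\emptyset$.

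I expect the main obstacle to be the domination bookkeeping rather than the arithmetic. The delicate points are: (i) tracking exactly how the $M$-modifications and layer truncations of earlier iterations affect the ``addable'' status of vertices, so that Lemma~\ref{nosuper} really does re-certify near-saturation of each $X_i$ with respect to the \emph{current} $M$; (ii) handling the vertices of $I_M(X_{\le\ell})$, and the immediately addable vertices a rebuild would produce, all of which are excluded from $V(K)$ and must be absorbed into $C$ (or their classes into $\cB_{\mathrm{bad}}$) without exceeding the budget; and (iii) choosing $\cB_{\mathrm{bad}}$ and $C$ so that the domination claim holds \emph{and} all three feasibility inequalities are satisfied at once.
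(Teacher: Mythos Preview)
Your proposal is correct and follows essentially the same approach as the paper: define $(X'_i,Y'_i)$ via Lemma~\ref{nosuper}, remove the saturated classes and the classes meeting $\bigcup_i(X'_i\setminus X_i)$ to get $\cB$, take $D$ to be $X'_{\le\ell}\cup Y'_{\le\ell}\cup X_{\ell+1}\cup Y_{\ell+1}$ together with one neighbour per immediately addable vertex, and then bound $|\cB|$, $|D|$, and $|D\setminus V(K)|$ using Lemmas~\ref{nocollapse}, \ref{nosuper}, $r$-claw-freeness, and feasibility conditions~(1) and~(2). Your decomposition $D=V(K)\cup C$ is cosmetically different from the paper's $D=W\cup S$ but the sets coincide, and your domination trichotomy is exactly the content of the paper's Claim~\ref{Wdom}. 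One small correction: feasibility condition~(3) is not used in this lemma (the paper invokes it only later in Lemma~\ref{lexreduce}); the positivity of the bracket and the fact that $|\cB|-1$ is a positive integer already handle the small cases without any separate check.
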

\begin{proof}[Proof:]
Suppose that after $L_{\ell+1}$ is constructed in line 8,
$|X_{\ell+1}|\le\rho|Y_{\le\ell}|$. For each $1\le i\le\ell$, let
${(X'_i,Y'_i)=\text{BuildLayer}((L_0,\dots,L_{i-1}),X_i,Y_i)}$. 
Define $\mathcal B$ algorithmically by performing the following steps in order: 
\begin{enumerate}
\item[(i)] ${\mathcal B}:=\cB_0=A(Y_{\le\ell})$.
\item[(ii)] Remove all vertex classes of ${\mathcal B}$ that contain $U$ vertices in $X_{\le\ell+1}$.
\item[(iii)] Remove all vertex classes in $\bigcup\limits_{i=1}^\ell A(X'_i\setminus X_i)$.
\end{enumerate}
Clearly $\cB\subseteq\cB_0$, and
$K$ is a constellation for $\cB_0$ (as noted after
Definition~\ref{tree}). 
\setcounter{claim}{0}
\begin{claim}\label{Bsize} We have
$$|\cB|-1\geq\left[1-\frac{1}{U}\left(\frac{1+\mu
      U}{1-\mu}+\rho\right)\right]|Y_{\le\ell}|.$$
\end{claim}
\begin{proof}[Proof:]
The vertex classes in $\mathcal B$ 
include the vertex classes of $T$ that do not contain $U$ vertices in
$X_{\le\ell+1}$ and do not contain any addable vertices for $X_i$,
$Y_i$, and $(L_0,\dots,L_{i-1})$ for all $1\le i\le\ell$. We use these
facts to bound $|\mathcal B|$ from below as follows.

Recall that $|A(Y_{\le\ell})|=|Y_{\le\ell}|+1$. The set $A^U$ of vertex
classes in $A(Y_{\le\ell})$ that contain $U$ vertices in
$X_{\le\ell+1}$ has size at most $U^{-1}|X_{\le\ell+1}|$. By
Lemma~\ref{nosuper}, $|X'_i|<(1+\mu)|X_i|$ for each
$i\in\{1,\dots,\ell\}$. As $X_i\subseteq X'_i$, this implies that
there are at most $\mu|X_i|$ vertices in $X'_i\setminus X_i$. Thus
$|A(X'_i\setminus X_i)|\le\mu|X_i|$ for all $1\le i\le\ell$ and so
$\sum\limits_{i=1}^{\ell} |A(X'_i\setminus
X_i)|\le\mu|X_{\le\ell}|$. Also by Lemma~\ref{nocollapse},
$|X_{\le\ell}|\le\frac{1}{1-\mu}|Y_{\le\ell}|$ and by the assumption,
$|X_{\ell+1}|\le\rho|Y_{\le\ell}|$. Therefore,  
\begin{align*}
|{\mathcal B}|&\ge|A(Y_{\le\ell})|-|A^U|-\left|\bigcup\limits_{i=1}^\ell A(X'_i\setminus X_i)\right|\\
&\ge|A(Y_{\le\ell})|-\frac{1}{U}|X_{\le\ell+1}|-\mu|X_{\le\ell}|\\
&=(|Y_{\le\ell}|+1)-\left(\frac{1}{U}|X_{\le\ell}|+\frac{1}{U}|X_{\ell+1}|+\mu|X_{\le\ell}|\right)\\
&\ge |Y_{\le\ell}|+1-\left[\left(\frac{1}{U}+\mu\right)|X_{\le\ell}|+\frac{\rho}{U}|Y_{\le\ell}|\right]\\
&\ge |Y_{\le\ell}|+1-\left[\left(\frac{1}{U}+\mu\right)\left(\frac{1}{1-\mu}\right)|Y_{\le\ell}|+\frac{\rho}{U}|Y_{\le\ell}|\right]\\
&= 1+\left[1-\frac{1}{U}\left(\frac{1+\mu U}{1-\mu}+\rho\right)\right]|Y_{\le\ell}|.
\end{align*}
\end{proof}
Let $B$ denote the set of vertices in the vertex classes of $\mathcal B$ and let $$W=X'_{\le\ell}\cup Y'_{\le\ell}\cup X_{\ell+1}\cup Y_{\ell+1}.$$
\begin{claim}\label{Wdom}
The set $W$ dominates $G[B\setminus I_M(W)]$.
\end{claim}
\begin{proof}[Proof:]
Let $u\in B\setminus I_M(W)$. By (iii) we have that $u\notin
X'_{\le\ell}\setminus X_{\le\ell}$.  

Suppose $u\in W\setminus I_M(W)$. Then $u\in (X_{\le\ell+1}\cup Y'_{\le\ell}\cup Y_{\ell+1})\setminus I_M(W)$. If $u\in Y'_i$ for some $i\in\{1,\dots,\ell\}$, then the construction of $(X'_i,Y'_i)$ and $(X_i,Y_i)$ by BuildLayer implies that $u$ has a neighbour $v$ in $X'_i$ (lines 3-4 of BuildLayer). Hence $v\in W$. Similarly, if $u\in Y_{\ell+1}$, then $u$ has a neighbour $v$ in $X_{\ell+1}$ (lines 3-4 of BuildLayer). If $u\in X_i$ for some $i\in\{1,\dots,\ell+1\}$, then since $u\notin I_M(W)$, $u$ has a neighbour $v$ that blocks $u$. By the construction of $(X_i,Y_i)$ (lines 2-4 of BuildLayer), $v\in Y_i$ and so $v\in W$. Therefore, every $u\in W\setminus I_M(W)$ has a neighbour in $W$. Thus we may assume $u\in B\setminus (W\cup I_M(W))$. 

Note that each vertex class in $\mathcal B$ has at most one vertex in $M$ and that these vertices are in $Y'_{\le\ell}$, hence $u\notin M$. Since $A(u)\in {\mathcal B}$, let $L_i$ be the layer of ${T'=(L_0,\dots,L_{\ell+1})}$ such that $A(u)\cap Y_{i-1}\neq\emptyset$ for some $1\le i\le \ell+1$.

Suppose $i<\ell+1$ and $u$ has no neighbours in $X'_{\le i}\cup Y'_{\le i}$. By (ii), $A(u)$ contains fewer than $U$ vertices in $X_i$. Also, by (iii), $A(u)$ contains no vertices in $X'_i\setminus X_i$. Thus by Definition~\ref{addabledef}, $u$ is an addable vertex for $X_i$, $Y_i$, and $(L_0,\dots,L_{i-1})$. Hence $\text{BuildLayer}((L_0,\dots,L_{i-1}),X_i,Y_i)$ would not stop until either $u$ is added to $X'_i$ or $u$ has a neighbour in $X'_i\cup Y'_i$. As $u\in B\setminus (W\cup I_M(W))$ and $X'_i\subseteq W$, we know that $u\notin X'_i$. Therefore $u$ has a neighbour in $X'_i\cup Y'_i$.

Now suppose $i=\ell+1$ and $u$ has no neighbours in $W$. Then again by
(ii) and (iii), $A(u)$ contains fewer than $U$ vertices in $X_i$ and
$A(u)$ contains no vertices in $X'_i\setminus X_i$. Thus by
Definition~\ref{addabledef}, $u$ is an addable vertex for
$X_{\ell+1}$, $Y_{\ell+1}$, and $(L_0,\dots,L_{\ell})$. Hence
$\text{BuildLayer}((L_0,\dots,L_{\ell}),\emptyset,\emptyset)$ would
not stop until either $u$ is added to $X_{\ell+1}$ or $u$ has a
neighbour in $X_{\ell+1}\cup Y_{\ell+1}$. As $u\in B\setminus (W\cup
I_M(W))$ and $X_{\ell+1}\subseteq W$, we know that $u\notin
X_{\ell+1}$. Therefore $u$ has a neighbour in $X_{\ell+1}\cup
Y_{\ell+1}$.

We therefore conclude that $W$ dominates  $G[B\setminus I_M(W)]$.
\end{proof}
Define $S$ to be the set of all $u\in V(G)$ for which there exists $v\in I_M(W)$ such that $u\in N(v)$ and $u$ is the neighbour of $v$ with the smallest index in
the ordering. 
\begin{claim}\label{Ssize}
The set $S$ dominates $I_M(W)$ and
$|S|\leq |I_M(W)|<\frac{2\mu+\rho}{1-\mu}|Y_{\le\ell}|$. 
\end{claim}
\begin{proof}[Proof:]
As each $v\in I_M(W)$ has at
least one neighbour in $G$, and the neighbour with the
smallest index in the ordering is in $S$, we have that $S$
dominates $I_M(W)$ and $|S|\le|I_M(W)|$. By Lemma~\ref{nosuper},
$|X'_{\le\ell}|<(1+\mu)|X_{\le\ell}|$ and so $|X'_{\le\ell}\setminus
X_{\le\ell}|<\mu|X_{\le\ell}|$. Also, by definition, $I_M(W)\subseteq
X'_{\le\ell}\cup X_{\ell+1}$ (see Definition~\ref{immaddable}). Thus  
\begin{align*}
|I_M(W)|&=|I_M(X'_{\le\ell}\cup X_{\ell+1})|\\
&=|I_M(X'_{\le\ell})|+|I_M(X_{\ell+1})|\\
&\le|I_M(X_{\le\ell})|+|I_M(X'_{\le\ell}\setminus X_{\le\ell})|+|X_{\ell+1}|\\
&<\mu|X_{\le\ell}|+\mu|X_{\le\ell}|+|X_{\ell+1}|\\
&=2\mu|X_{\le\ell}|+|X_{\ell+1}|.
\end{align*}
Recall that
${|X_{\le\ell}|\le\frac{1}{1-\mu}|Y_{\le\ell}|}$ by
Lemma~\ref{nocollapse}.
Since Conclusion (1) of Lemma~\ref{allbigx} does not hold we
know that $|X_{\ell+1}|\leq\rho|Y_{\le\ell}|$, and so we obtain
$$|I_M(W)|<\frac{2\mu}{1-\mu}|Y_{\le\ell}|+\rho|Y_{\le\ell}|,$$
from which the claim follows.
\end{proof}
Let $D=W\cup S.$ 
Then $D$ contains $V(K)$, and by Claim~\ref{Wdom} and the choice of $S$ we see
that $D$ dominates $G_{\cB}$. 

To help us estimate the size of $D$, we first establish the following.
\begin{claim}\label{Yprime} We have
$|Y'_{\le\ell}\setminus Y_{\le\ell}|<\mu(r-1)|X_{\le\ell}|$.
\end{claim}
\begin{proof}[Proof:]
Let $v\in Y'_{\le\ell}\setminus Y_{\le\ell}$. Then by
definition $v\in M$, and hence $v$ blocks its neighbours. If $uv\in E(G)$
for any $u\in X_{i}$ such that $1\le i\le\ell$,  $v$ blocks $u$ and so
would be included in $Y_i$ (line 4 of BuildLayer). This implies $v\in
Y_{\le\ell}$, which is a contradiction. Therefore $v$ is not adjacent
to any $u\in X_{\le\ell}$. However, $v\in Y'_i\setminus Y_i$ for some
$1\le i\le\ell$ and, by the construction of $(X'_i,Y'_i)$, $v$ is
adjacent to exactly one $u\in X'_i$ (lines 2-5). Thus $v$ has a
neighbour in $X'_i\setminus X_i$ and so $v$ has a neighbour in
$X'_{\le\ell}\setminus X_{\le\ell}$. 

Note that $Y'_i$ is a set of independent vertices in distinct vertex classes. As $G$ is $r$-claw-free, each vertex of $X'_i$ has at most $r-1$ independent neighbours in different vertex classes. Since $|X'_{\le\ell}\setminus X_{\le\ell}|<\mu|X_{\le\ell}|$, we have
$$|Y'_{\le\ell}\setminus Y_{\le\ell}|\le (r-1)|X'_{\le\ell}\setminus X_{\le\ell}|< \mu(r-1)|X_{\le\ell}|.$$
\end{proof}
It remains to bound $|D|$ and $|D\setminus V(K)|$. To do this, we note
that $D=X_{\le\ell}\cup Y_{\le\ell}\cup Q$ and $D\setminus
V(K)=Q\cup I_M(X_{\le\ell})$ where $$Q=(X'_{\le\ell}\setminus
X_{\le\ell})\cup        
(Y'_{\le\ell}\setminus Y_{\le\ell})\cup X_{\ell+1}\cup Y_{\ell+1}\cup
S.$$   
\begin{claim}\label{Qsize} We have
$$|Q|<\frac{\mu(r+2)+\rho(r+1)}{1-\mu}|Y_{\le\ell}|.$$
\end{claim}
\begin{proof}[Proof:]
Since $G$ is $r$-claw-free we know that $|Y_{\ell+1}|\le
  (r-1)|X_{\ell+1}|$. Since Conclusion
(1) of Lemma~\ref{allbigx} does not hold we know that
$|X_{\ell+1}|\le \rho|Y_{\le\ell}|$, and therefore
$|Y_{\ell+1}|\le\rho(r-1)|Y_{\le\ell}|$. 
We bound each of the remaining three summands below using (respectively)
Lemma~\ref{nosuper}, Claim~\ref{Yprime}, and Claim~\ref{Ssize}, to obtain 
\begin{align*}
|Q|&\leq|X'_{\le\ell}\setminus
X_{\le\ell}|+
|Y'_{\le\ell}\setminus Y_{\le\ell}|+ |X_{\ell+1}|+|Y_{\ell+1}|+
|S|\\
&\leq\mu|X_{\le\ell}|+\mu(r-1)|X_{\le\ell}|+\rho|Y_{\le\ell}|+\rho(r-1)|Y_{\le\ell}|+\frac{2\mu+\rho}{1-\mu}|Y_{\le\ell}|\\
&=\mu r|X_{\le\ell}|+\rho r|Y_{\le\ell}|+\frac{2\mu+\rho}{1-\mu}|Y_{\le\ell}|<\mu r|X_{\le\ell}|+\frac{2\mu+\rho(r+1)}{1-\mu}|Y_{\le\ell}|.
\end{align*}
Since
${|X_{\le\ell}|\le\frac{1}{1-\mu}|Y_{\le\ell}|}$ by
Lemma~\ref{nocollapse}, we conclude
$|Q|<\frac{\mu(r+2)+\rho(r+1)}{1-\mu}|Y_{\le\ell}|$ as required.
\end{proof}
Now Claim~\ref{Qsize} and Lemma~\ref{nocollapse} combine to give
\begin{align*}
|D|&=|X_{\le\ell}\cup Y_{\le\ell}\cup Q|\\
   &<\left(\frac1{1-\mu}+1+\frac{\mu(r+2)+\rho(r+1)}{1-\mu}\right)|Y_{\le\ell}|
<\left(\frac{2+\mu(r+2)+\rho(r+1)}{1-\mu}\right)|Y_{\le\ell}|.
\end{align*}

Claim~\ref{Bsize} and the fact that $(\mu,U,\rho)$ is feasible for
$(r,\epsilon)$ (Condition~(1) in Definition~\ref{feasible}) tell us
\begin{align*}
(2+\epsilon)(|{\mathcal
    B}|-1)&\ge(2+\epsilon)\left[1-\frac{1}{U}\left(
\frac{1+\mu U}{1-\mu}+\rho\right)\right]|Y_{\le\ell}|\\
&>\left(\frac{2+\mu(r+2)+\rho(r+1)}{1-\mu}\right)|Y_{\le\ell}|>|D|.
\end{align*}

To bound $|D\setminus V(K)|$ using Claim~\ref{Qsize}, we observe that
$I_M(X_{\le\ell})\subseteq I_M(W)$, so we may use 
Claim~\ref{Ssize} to conclude that 
\begin{align*}
|D\setminus V(K)|=|Q\cup I_M(X_{\le\ell})|&< \frac{\mu(r+2)+\rho(r+1)}{1-\mu}|Y_{\le\ell}|+\frac{2\mu+\rho}{1-\mu}|Y_{\le\ell}|\\
&=\frac{\mu(r+4)+\rho(r+2)}{1-\mu}|Y_{\le\ell}|.
\end{align*}

Using Claim~\ref{Bsize} again, and Condition~(2) of Definition~\ref{feasible} of
feasibility of $(\mu,U,\rho)$ we find
\begin{align*}
\epsilon(|{\mathcal
    B}|-1)&\ge\epsilon\left[1-\frac{1}{U}\left(\frac{1+\mu
      U}{1-\mu}+\rho\right)\right]|Y_{\le\ell}|\\
&>\frac{\mu(r+4)+\rho(r+2)}{1-\mu}|Y_{\le\ell}|
>|D\setminus V(K)|.
\end{align*}
This completes the proof of Lemma~\ref{allbigx}.
\end{proof}

The length of the signature vectors defined in Definition~\ref{sig} of
the next section (which will be our measure of the progress of
GrowTransversal) depends on the number of layers of the alternating
trees $T$ constructed by the algorithm. Our last result of this
section gives an upper bound on this quantity.

\begin{lemma}\label{fewlayers}
Suppose $(\mu, U,\rho)$ is feasible for $(r,\epsilon)$. The number of layers in the alternating tree $T$ with respect to partial independent transversal $M$ maintained during the execution of {\rm GrowTransversal} is always bounded by $c\log(m)$ where $c=\frac{1}{\log[1+\rho(1-\mu)]}$.
\end{lemma}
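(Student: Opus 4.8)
The plan is to show that the total size $|Y_{\le\ell}|$ (equivalently $|X_{\le\ell}|$, up to constant factors) grows by a fixed multiplicative factor $1+\rho(1-\mu)>1$ with each successive layer, so after more than $c\log m$ layers the tree would have to intersect more than $m$ vertex classes, which is impossible. The key observation is that an alternating tree is only ever grown (i.e.\ a new layer $L_{\ell+1}$ is appended and $\ell$ incremented in line 14 of GrowTransversal) when the newly built layer satisfies $|X_{\ell+1}|>\rho|Y_{\le\ell}|$; otherwise the algorithm terminates in line 12. Thus at every stage where $T$ has $\ell+1$ layers (with $\ell\ge1$) we have the growth inequality $|X_{\ell+1}|>\rho|Y_{\le\ell}|$.

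\textbf{Key steps.} First I would fix the current alternating tree $T=(L_0,\dots,L_\ell)$ maintained by GrowTransversal and argue that for each $i$ with $1\le i\le\ell$, the passage from $\ell'=i-1$ layers to $i$ layers happened (at some earlier point in the execution, possibly with different PITs) via BuildLayer producing $L_i$ with $|X_i|>\rho|Y_{\le i-1}|$ — here I must be slightly careful because of how SuperposedBuild and the collapse cascades can shrink the tree, so I would phrase this in terms of: whenever the number of layers strictly increases (line 14), the just-built layer is large relative to the union of all earlier $Y$'s. Second, using Lemma~\ref{nocollapse}, $|Y_i|\ge(1-\mu)|X_i|$ for each $i\in\{1,\dots,\ell\}$, so combining gives $|Y_i|\ge(1-\mu)|X_i|>\rho(1-\mu)|Y_{\le i-1}|$, and therefore
\begin{equation*}
|Y_{\le i}| = |Y_{\le i-1}| + |Y_i| > \bigl(1+\rho(1-\mu)\bigr)|Y_{\le i-1}|.
\end{equation*}
Third, I would peel this recursion from $i=\ell$ down to $i=1$: since $|Y_{\le1}|=|Y_1|\ge 1$ (as $L_1$ is a nonempty layer whenever it exists — if $X_1$ were empty the algorithm would have terminated or not grown), we obtain $|Y_{\le\ell}|>(1+\rho(1-\mu))^{\ell-1}$. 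Finally, since the vertex classes $A(Y_{\le\ell})$ are all distinct (this is part of the definition of an alternating tree, Definition~\ref{tree}(4) together with the observation that each $Y_i\subseteq M$ consists of vertices in distinct classes), we have $|Y_{\le\ell}|+1=|A(Y_{\le\ell})|\le m$, hence $(1+\rho(1-\mu))^{\ell-1}<m$, which upon taking logarithms yields $\ell-1<\frac{\log m}{\log[1+\rho(1-\mu)]}$, i.e.\ the number of layers $\ell+1$ is bounded by $c\log m$ for $c=\frac{1}{\log[1+\rho(1-\mu)]}$ (absorbing small additive constants, and using Condition (3) of feasibility or $0<\mu<1$ to ensure $1+\rho(1-\mu)>1$ so $c>0$).

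\textbf{Main obstacle.} The delicate point is the bookkeeping around \emph{which} $|Y_{\le i-1}|$ the inequality $|X_i|>\rho|Y_{\le i-1}|$ refers to: layers below the collapse point are never modified during a collapse cascade or by SuperposedBuild (as argued in the proof of Lemma~\ref{nocollapse}), but a layer can be rebuilt larger by SuperposedBuild (line 27), and the layer $L_{\ell}$ that was just below a discarded collapsible layer may itself later be extended. I would handle this by noting that SuperposedBuild only ever \emph{increases} the sizes of the $X_i$ (hence of the $Y_i$, by the layer property) for surviving layers, so the growth inequality $|X_i|>\rho|Y_{\le i-1}|$ — which held at the moment $L_i$ was first appended as the $i$-th layer — is only strengthened by subsequent modifications that keep $L_i$ in the tree; meanwhile all earlier layers only grow too, so the chain of inequalities propagates correctly. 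Once this monotonicity is pinned down, the rest is the routine geometric-series estimate sketched above.
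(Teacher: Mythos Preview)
Your proposal is correct and follows essentially the same approach as the paper: combine Lemma~\ref{nocollapse} ($|Y_i|\ge(1-\mu)|X_i|$) with the non-termination condition $|X_i|>\rho|Y_{\le i-1}|$ to get geometric growth of $|Y_{\le i}|$, then bound by $m$. One small correction to your monotonicity argument: the reason the inequality $|X_i|>\rho|Y_{\le i-1}|$ persists is not that ``earlier layers only grow too'' (growth of $Y_{\le i-1}$ would weaken the inequality), but rather that layers $L_0,\dots,L_{i-1}$ are \emph{unchanged} as long as $L_i$ survives---any modification by SuperposedBuild to a layer $j<i$ truncates the tree and discards $L_i$, and any collapse shrinking $Y_{i-1}$ likewise discards $L_i$.
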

\begin{proof}[Proof:]
Suppose $T=(L_0,\dots,L_\ell)$ at the beginning of some iteration of
the while loop in line $7$ of GrowTransversal.
Consider any $L_i$
where $1\le i\le\ell$. By Lemma~\ref{nocollapse},
$|Y_i|>(1-\mu)|X_i|$. Since GrowTransversal did not terminate in the
iteration in which $L_i$ was constructed, by
Lemma~\ref{allbigx} we have $$|Y_i|>(1-\mu)|X_i|>\rho(1-\mu)|Y_{\le
  i-1}|.$$ 
Therefore since $|Y_1|\geq1$, we find
$m\ge|Y_{\le\ell}|=\sum\limits_{i=0}^\ell |Y_i|>
[1+\rho(1-\mu)]^\ell$. Thus the number of layers $\ell$ at any moment
of the algorithm is bounded above by $\frac{\log
  m}{\log[1+\rho(1-\mu)]}$. As $\mu$ and $\rho$ are fixed constants,
this is $c\log(m)$ for $c=\frac{1}{\log[1+\rho(1-\mu)]}$.  
\end{proof}

\section{Signatures}\label{sigs}
Recall from Section~\ref{algs} that $r$ and $\epsilon$ are fixed constants, and  $\mu$, $U$, and $\rho$ are fixed such that $(\mu, U,\rho)$ is feasible for $(r,\epsilon)$. We begin this section by defining the signature vector of an alternating tree. We then use these signature vectors to prove that GrowTransversal terminates after a polynomial in $m$ number of iterations, where the degree of the polynomial is a function of $r$ and $\epsilon$.

\begin{definition}\label{sig}
Let $T=(L_0,\dots,L_\ell)$ be an alternating tree with respect to PIT $M$ and vertex class $A$. The {\it signature} of layer $L_i$ is defined to be $$(s_{2i-1},s_{2i})=\left(-\left\lfloor \log_b \frac{\rho^{-i}}{(1-\mu)^{i-1}}|X_i|\right\rfloor, \left\lfloor \log_b \frac{\rho^{-i}}{(1-\mu)^i}|Y_i|\right\rfloor\right)$$ where $b=\frac{U}{U-\mu\rho}$.
The {\it signature vector} of $T$ is ${s=(s_1,s_2,\dots,s_{2\ell-1}, s_{2\ell},\infty)}$.
\end{definition}

The above definition for the signature of a layer $L_i$ is chosen so that the lexicographic value of the signature vector decreases whenever $|X_i|$ increases significantly (see Lemma~\ref{lexreduce} subcase 2.2) as well as decreases whenever $|Y_i|$ decreases significantly (Lemma~\ref{lexreduce} subcase 2.1). The factors $\frac{\rho^{-i}}{(1-\mu)^{i-1}}$ and $\frac{\rho^{-i}}{(1-\mu)^{i}}$ are present to ensure that the coordinates of the signature vector are non-decreasing in absolute value, which we will show in Lemma~\ref{absval}. These two properties, together with Lemma~\ref{fewlayers}, combine to give the desired upper bound on the total number of signature vectors (Lemma~\ref{fewsigs}).

We begin by showing that the lexicographic value of the signature vector decreases during each iteration of GrowTransversal.

\begin{lemma}\label{lexreduce}
The lexicographic value of the signature vector reduces across each iteration of the loop in line 7 of {\rm GrowTransversal} unless the algorithm terminates during that iteration.
\end{lemma}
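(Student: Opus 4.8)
The plan is to fix an iteration of the while loop in line~7 that does not terminate, write $T=(L_0,\dots,L_\ell)$, $M$ and $s$ for the alternating tree, PIT and signature vector at its start, and $s'$ for the signature vector at its end, and to show $s'<_{\mathrm{lex}} s$. The iteration begins by calling BuildLayer to produce $L_{\ell+1}$; since it does not terminate in line~12 we have $|X_{\ell+1}|>\rho|Y_{\le\ell}|$, so $X_{\ell+1}\neq\emptyset$ and $|X_{\ell+1}|>\rho|Y_j|$ for every $j\le\ell$. \textbf{Case 1: $L_{\ell+1}$ is not collapsible.} Then the inner while loop of line~15 is skipped and the iteration ends with $T'=(L_0,\dots,L_{\ell+1})$ and $M$ unchanged, so $s=(s_1,\dots,s_{2\ell},\infty)$ while $s'=(s_1,\dots,s_{2\ell},s_{2\ell+1},s_{2\ell+2},\infty)$; since $X_{\ell+1}\neq\emptyset$ the entry $s_{2\ell+1}$ is a finite integer that has replaced the entry $\infty$ of $s$ in position $2\ell+1$, so $s'<_{\mathrm{lex}} s$.

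\textbf{Case 2: $L_{\ell+1}$ is collapsible} (here $\ell\ge1$, as otherwise the algorithm terminates in line~18), so at least one collapse (lines~20--23) is performed. I would first record three structural facts valid for the rest of the iteration: no layers are created after line~8, so the number of layers is nonincreasing; the set $X_i$ of a surviving layer can only grow, and only through a SuperposedBuild replacement, which increases it by a factor of at least $1+\mu$; and the set $Y_i$ of a surviving layer can only change either by shrinking (when the layer above it is collapsed) or by a SuperposedBuild replacement, the latter happening only together with a growth of $X_i$. Now let $q$ be the least index of a layer modified at some point of the iteration. This $q$ exists with $q\le\ell$: the first collapse acts on $L_{\ell+1}$, collapsibility gives $I_M(X_{\ell+1})\neq\emptyset$, and by Definition~\ref{tree} every class meeting $I_M(X_{\ell+1})\subseteq X_{\ell+1}$ also meets $Y_\ell$, so the collapse removes at least one vertex of $Y_\ell$. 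By minimality of $q$, layers $L_1,\dots,L_{q-1}$ are never touched, so $s'$ and $s$ agree in positions $1,\dots,2q-2$; and $L_q$ survives to the end of the iteration, since any truncation that would delete $L_q$ is performed immediately after modifying a layer of index at most $q-1$ (shrinking its $Y$-set, in a collapse step, or growing its $X$-set, in SuperposedBuild), contradicting minimality. Thus $s'$ possesses coordinates $2q-1$ and $2q$, namely the signature pair of the final $L_q$, and it remains to compare this pair with $(s_{2q-1},s_{2q})$.

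By the structural facts, either $|X_q|$ has grown by a factor at least $1+\mu$ or it is unchanged. In the first subcase, feasibility condition~(3) ($U-\mu\rho>\rho$) is equivalent to $1+\mu>b$, so $\log_b|X_q|$ increases by more than $1$, the floor in $s_{2q-1}$ increases by at least $1$, and $s_{2q-1}$ strictly decreases; hence $s'<_{\mathrm{lex}} s$ at position $2q-1$. In the second subcase $s_{2q-1}$ is unchanged, and since $X_q$ did not grow, $L_q$ must have been touched by a collapse of $L_{q+1}$; at that moment the deleted vertices of $Y_q$ number one per class meeting $I_M(X_{q+1})$, each such class contains at most $U$ vertices of $X_{q+1}$ (Definition~\ref{addabledef}), so more than $\mu|X_{q+1}|/U$ vertices of $Y_q$ are deleted, and since $|X_{q+1}|>\rho|Y_q|$ (the bound of Lemma~\ref{allbigx}(1), preserved for a layer while it survives) $|Y_q|$ falls below $(1-\mu\rho/U)|Y_q|=|Y_q|/b$. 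As $Y_q$ is never increased afterward in this subcase, the floor in $s_{2q}$ drops by at least $1$, so $s'<_{\mathrm{lex}} s$ at position $2q$.

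I expect the main obstacle to be precisely the bookkeeping in Case~2: confirming that the least modified layer $L_q$ is never deleted and never has its $X$-set shrink, that any later recovery of $|Y_q|$ can only occur alongside a growth of $X_q$ (so that the earlier coordinate $2q-1$ already forces the lexicographic decrease), and that the quantitative drop of $|Y_q|$ genuinely beats the logarithm base $b$. None of this needs a new idea beyond threading the constants $\mu$, $1+\mu$, $U$ and $b$ through the definitions of collapsibility, addability, feasibility and the signature, but the chain of truncations and replacements inside a single iteration must be tracked carefully.
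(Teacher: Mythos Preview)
Your proposal is correct and takes essentially the same approach as the paper. The paper organises Case~2 by asking whether the \emph{final} call to SuperposedBuild modifies a layer (its Subcase~2.2) or not (its Subcase~2.1); this is equivalent to your split according to whether $X_q$ grew (your Subcase~a) or stayed fixed (your Subcase~b), and your least modified index $q$ coincides with the paper's index $q$ in its Subcase~2.2 and with $t-1$ in its Subcase~2.1. The quantitative estimates (the $\mu/U$ count of removed $Y$-vertices, the bound $|X_{q+1}|>\rho|Y_q|$ via Lemma~\ref{allbigx}, and the use of feasibility condition~(3) to compare $1+\mu$ with $b$) are identical in both arguments; your explicit ``structural facts'' about how $\ell$, $X_i$ and $Y_i$ evolve inside one pass of the collapse loop make transparent exactly the bookkeeping that the paper leaves implicit when it asserts $L'_i=L_i$ for $i<t-1$.
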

\begin{proof}[Proof:]
Let $T=(L_0,\dots, L_\ell)$ and $M$ be the alternating tree and PIT at the beginning of some iteration of the while loop in line $7$ of GrowTransversal. Let $s=(s_1,\dots, s_{2\ell},\infty)$ be the signature vector of $T$. There are two cases.

\begin{case}
No collapse operation occurs in this iteration, i.e. ${|I_M(X_{\ell+1})|\le\mu|X_{\ell+1}|}$.
\end{case}
The only modification to $T$ in this iteration is a new layer $L_{\ell+1}$ is added to $T$ (lines 8-10). The new signature vector for $T$ is therefore $s'=(s'_1,\dots,s'_{2\ell+2},\infty)$ where $s'_i=s_i$ for all $1\le i\le 2\ell$ and $(s'_{2\ell+1},s'_{2\ell+2})$ is the signature of layer $L_{\ell+1}$. Hence the lexicographic value is reduced.

\begin{case}
At least one collapse operation occurs in this iteration, i.e. ${|I_M(X_{\ell+1})|>\mu|X_{\ell+1}|}$.
\end{case}
Consider the alternating tree $T^*$ returned by SuperposedBuild during the last iteration of the loop of collapses (lines 15-28) in this iteration of GrowTransversal. Either SuperposedBuild returns the same tree as its input $T'$ or SuperposedBuild modifies a layer of $T$ and removes all later layers (i.e. $T^*=(L_0,\dots,L_k)$ for some $k\le\ell$ where $L_k=(X'_k,Y'_k)$).

\subcase{2.1.}
SuperposedBuild returns its input $T'$ as $T^*$. \vspace{4pt}

Let $t$ be the index of the layer of $T$ satisfying $|I_M(X_{t})|>\mu|X_{t}|$ in the final iteration of the loop of collapses. If $t=1$, a vertex in $I_M(X_1)$ is added to $M$ and the algorithm terminates. Otherwise, $t>1$ and $T'=(L'_0,\dots,L'_{t-1})$, where $L'_i=L_i$ for all $0\le i<t-1$ and $L'_{t-1}$ is modified from $L_{t-1}$ as lines 20-23 describe. 

The signature vector of $T'$ is therefore $s'=(s'_1,\dots,s'_{2t-2},\infty)$ where $s'_i=s_i$ for all $1\le i\le 2t-4$ and $$(s'_{2t-3},s'_{2t-2})=\left(-\left\lfloor \log_b \frac{\rho^{-(t-1)}}{(1-\mu)^{t-2}}|X'_{t-1}|\right\rfloor, \left\lfloor \log_b \frac{\rho^{-(t-1)}}{(1-\mu)^{t-1}}|Y'_{t-1}|\right\rfloor\right).$$ Since lines 20-23 do not modify $X_\ell$ in GrowTransversal, $X_{t-1}$ is not modified and so $X'_{t-1}=X_{t-1}$. 

Note that each vertex class containing a vertex in $I_M(X_t)$ must contain a vertex in $Y_{t-1}$. Thus, since a vertex class contains at most $U$ vertices in $X_t$, there are at least $\frac{\mu}{U}|X_t|$ blocking vertices in $Y_{t-1}$ that are not in $T'$ because of the replacements in lines 20-23. By Lemma~\ref{allbigx}, we have $|X_t|>\rho|Y_{\le t-1}|\ge\rho|Y_{t-1}|$ and so $$|Y'_{t-1}|\le|Y_{t-1}|-\frac{\mu}{U}|X_t|<\left(1-\frac{\mu\rho}{U}\right)|Y_{t-1}|.$$ Therefore, 
\begin{align*}
\log_b \frac{\rho^{-(t-1)}}{(1-\mu)^{t-1}}|Y'_{t-1}|&< \log_b \frac{\rho^{-(t-1)}}{(1-\mu)^{t-1}}\left(1-\frac{\mu\rho}{U}\right)|Y_{t-1}| \\
&\le \log_b \left(1-\frac{\mu\rho}{U}\right) + \log_b \frac{\rho^{-(t-1)}}{(1-\mu)^{t-1}}|Y_{t-1}|\\
&\le \log_b \left(\frac{U-\mu\rho}{U}\right) + \log_b \frac{\rho^{-(t-1)}}{(1-\mu)^{t-1}}|Y_{t-1}|\\
&=-1+\log_b \frac{\rho^{-(t-1)}}{(1-\mu)^{t-1}}|Y_{t-1}|.
\end{align*}
Hence $s'_{2t-3}=s_{2t-3}$ and $s'_{2t-2}<s_{2t-2}$, so the lexicographic value of the signature vector is reduced.

\subcase{2.2.}
SuperposedBuild returns an alternating tree $T^*$ different from its input $T'$. \vspace{4pt}

Let $t$ be the index of the layer of $T$ satisfying $|I_M(X_{t})|>\mu|X_{t}|$ in the final iteration of the loop of collapses. Let $q$ be the index of the last layer in the alternating tree returned by SuperposedBuild in the final iteration of the loop of collapses. Hence $|X'_{q}|\ge (1+\mu)|X_{q}|$. 

As $(\mu, U,\rho)$ is feasible, $\rho\le U-\mu\rho$. Hence $\frac{U}{U-\mu\rho}=1+\frac{\mu\rho}{U-\mu\rho}\le 1+\mu$. Thus,
\begin{align*}
\log_b \frac{\rho^{-q}}{(1-\mu)^{q-1}}|X'_q|&\ge \log_b \frac{\rho^{-q}}{(1-\mu)^{q-1}}(1+\mu)|X_{q}| \\
&=\log_b\left(1+\mu\right) + \log_b \frac{\rho^{-q}}{(1-\mu)^{q-1}}|X_{q}|\\
&\ge1+\log_b \frac{\rho^{-q}}{(1-\mu)^{q-1}}|X_{q}|
\end{align*}
and so $s'_{2q-1}<s_{2q-1}$. Since SuperposedBuild and the loop of collapses do not modify layers $L_0,\dots, L_{q-1}$, we see that $s'_i=s_i$ for all $1\le i\le 2q-2$. Hence the signature vector of $T^*$ returned by SuperposedBuild is $(s'_1,\dots,s'_{2q-1},s'_{2q},\infty)$. Thus the lexicographic value of the signature vector is reduced.
\end{proof}

\begin{lemma}\label{absval}
The coordinates of the signature vector are non-decreasing in absolute value at the beginning of each iteration of the while loop in line $7$ of {\rm GrowTransversal}.
\end{lemma}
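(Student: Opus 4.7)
The plan is to verify the two families of inequalities $|s_{2i-1}|\le|s_{2i}|$ (within each layer) and $|s_{2i}|\le|s_{2i+1}|$ (between consecutive layers), by substituting the formula in Definition~\ref{sig} and reducing each to a size bound already proved.

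First I would dispense with sign issues. Every layer $L_i$ present in $T$ at the start of an iteration has $|X_i|\ge1$ (otherwise it would not have survived the test in line~11 at its construction), and together with $0<\rho,\,1-\mu<1$ this gives $\tfrac{\rho^{-i}}{(1-\mu)^{i-1}}|X_i|\ge1$ and $\tfrac{\rho^{-i}}{(1-\mu)^i}|Y_i|\ge1$. Since $b>1$, the arguments of both logs in Definition~\ref{sig} are $\ge1$, so the floors are $\ge0$. Hence
\[
|s_{2i-1}|=\left\lfloor\log_b\tfrac{\rho^{-i}}{(1-\mu)^{i-1}}|X_i|\right\rfloor,\qquad |s_{2i}|=\left\lfloor\log_b\tfrac{\rho^{-i}}{(1-\mu)^i}|Y_i|\right\rfloor,
\]
and comparisons among them reduce, by monotonicity of $\log_b$ and $\lfloor\cdot\rfloor$, to comparisons of the log arguments.

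The within-layer inequality $|s_{2i-1}|\le|s_{2i}|$ is then equivalent to $(1-\mu)|X_i|\le|Y_i|$, which is the conclusion of Lemma~\ref{nocollapse}, applicable because at the start of each iteration of the loop in line~7 no layer of $T$ is collapsible. The between-layer inequality $|s_{2i}|\le|s_{2i+1}|$ (for $1\le i<\ell$) reduces analogously to $\rho|Y_i|\le|X_{i+1}|$. For this I would apply Lemma~\ref{allbigx} to the iteration $t^*$ in which the current $L_{i+1}$ was built in line~8: since GrowTransversal did not terminate in line~12 of~$t^*$, conclusion~(1) of that lemma yields $|X_{i+1}|>\rho|Y_{\le i}|\ge\rho|Y_i|$ with the values at time $t^*$.

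The only obstacle is then to check that this inequality is not damaged between $t^*$ and the current iteration. Here I would verify two persistence claims. First, $|X_{i+1}|$ can only grow, because the sole operation that modifies $L_{i+1}$ after its construction is a call inside SuperposedBuild that replaces $(X_{i+1},Y_{i+1})$ by $(X'_{i+1},Y'_{i+1})$ exactly when $|X'_{i+1}|\ge(1+\mu)|X_{i+1}|$. Second, $|Y_i|$ is unchanged on the interval $[t^*,\text{now}]$: a collapse can only shrink $Y_i$ by operating at level $\ell=i+1$ through lines 20--23, but then line 25 removes $L_{i+1}$ from $T$, contradicting its continued presence; and any SuperposedBuild modification of $L_i$ would likewise erase $L_{i+1}$. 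Combining these two observations with the bound at time $t^*$ gives $\rho|Y_i|<|X_{i+1}|$ at the current iteration, completing the argument.
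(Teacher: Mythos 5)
Your proof follows the same route as the paper's: reduce both halves of the claim to the bounds $(1-\mu)|X_i|\le|Y_i|$ (Lemma~\ref{nocollapse}) and $\rho|Y_i|\le|X_{i+1}|$ (Lemma~\ref{allbigx}), then compare the floor-of-log expressions. Your two additions — checking the floors are nonnegative so that the absolute values simplify, and the explicit persistence argument that the Lemma~\ref{allbigx} bound survives from the time $L_{i+1}$ was built to the current iteration — are correct and fill in details the paper's proof leaves implicit rather than constituting a different approach.
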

\begin{proof}[Proof:]
Let $T=(L_0,\dots, L_\ell)$ and $M$ be the alternating tree and PIT at the beginning of some iteration of the while loop in line $7$ of GrowTransversal. Consider layer $L_i$ for some $1\le i\le\ell$. Since $|Y_i|\ge(1-\mu)|X_i|$ by Lemma~\ref{nocollapse}, $$|s_{2i-1}|=\left\lfloor \log_b \frac{\rho^{-i}}{(1-\mu)^{i-1}}|X_i|\right\rfloor\le\left\lfloor \log_b \frac{\rho^{-i}}{(1-\mu)^{i}}|Y_i|\right\rfloor=|s_{2i}|.$$ Hence the coordinates of the signature vector for a layer of $T$ are non-decreasing in absolute value. Now consider layers $L_i$ and $L_{i+1}$ for some $0\le i\le\ell-1$. Lemma~\ref{allbigx} implies $|X_i|\ge\rho|Y_{i-1}|$ and so $$|s_{2i}|=\left\lfloor \log_b \frac{\rho^{-i}}{(1-\mu)^{i}}|Y_i|\right\rfloor\le\left\lfloor \log_b \frac{\rho^{-(i+1)}}{(1-\mu)^{i}}|X_{i+1}|\right\rfloor=|s_{2i+1}|.$$ Thus consecutive coordinates of the signature vector for coordinates of different layers of $T$ are also non-decreasing in absolute value. Hence the coordinates of the signature vector of $T$ are non-decreasing in absolute value.
\end{proof}

We may now use Lemmas~\ref{lexreduce} and~\ref{absval} to bound the
total number of possible signature vectors. 

\begin{lemma}\label{fewsigs}
Let $T=(L_0,\dots, L_\ell)$ and $M$ be the alternating tree and
partial independent transversal at the beginning of some iteration of
the while loop in line $7$ of GrowTransversal. The number of possible
signature vectors for $T$ is bounded by a polynomial in $m$ of degree
$k$ where $k$ depends only on $r$ and $\epsilon$. 
\end{lemma}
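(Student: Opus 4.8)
The strategy is to reparametrize each signature vector as a subset of a bounded-size integer interval, so that a crude count of subsets gives the polynomial bound. First I would establish the basic size constraints on the entries. By Lemma~\ref{fewlayers} the number of layers $\ell$ is at most $c\log m$, so the signature vector has length at most $2c\log m + 1$. Each individual entry $s_j$ is (the absolute value of) a floor of $\log_b$ of a quantity that is at most $\rho^{-\ell}(1-\mu)^{-\ell} m$ and at least a positive constant (since $|X_i|\geq 1$ and $|Y_i|\geq 1$ whenever $L_i$ is a nonempty layer); hence $|s_j| = O(\ell + \log m) = O(\log m)$, where the implied constant depends only on $r$ and $\epsilon$ (through $\mu$, $\rho$, $U$, $b$). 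So every signature vector lives in a box of side $O(\log m)$ and dimension $O(\log m)$, which naively gives $(\log m)^{O(\log m)}$ — superpolynomial, so this crude bound is not enough and we need the subset encoding.

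The key step is the encoding. By Lemma~\ref{absval} the sequence of absolute values $|s_1| \le |s_2| \le \cdots \le |s_{2\ell}|$ is non-decreasing, and by the size bound each lies in $\{0, 1, \dots, N\}$ for some $N = O(\log m)$ depending only on $r, \epsilon$. A non-decreasing sequence of length $L := 2\ell$ with entries in $\{0,\dots,N\}$ is determined by, for each value $v \in \{0,\dots,N\}$, the number of indices $j$ with $|s_j| = v$; equivalently it is a multiset of size $L$ from an $(N{+}1)$-element set. But this still gives $\binom{N+L}{L}$, and since $L$ and $N$ are both $\Theta(\log m)$ this is again superpolynomial. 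The actual mechanism — following Annamalai — must instead exploit that the entries are not arbitrary: consecutive pairs $(s_{2i-1}, s_{2i})$ come from the \emph{same} layer and satisfy tighter relations, and more importantly that the relevant quantity to bound is not the number of non-decreasing sequences of a fixed length but the number that can actually arise. The correct move is: associate to $T$ the set $\Sigma(T) \subseteq \{1, \dots, N\}$ of ``achieved levels together with multiplicities folded in'' — concretely, record the set of values $\sum_{j \le k}(\text{something})$ or use the standard trick of mapping the non-decreasing sequence $0 \le |s_1| \le \cdots \le |s_L| \le N$ to the set $\{\,|s_j| + j : 1 \le j \le L\,\} \subseteq \{1, \dots, N + L\}$, which is an injection onto $L$-subsets; since $N + L = O(\log m)$ and... this still fails. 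I expect the genuine argument is that $L$ itself is bounded by a \emph{constant} (not $\Theta(\log m)$) once one accounts for the growth relations $|X_{i+1}| \ge \rho(1-\mu)|Y_{\le i}|$ pushing each successive layer's logarithmic signature up by a bounded-below amount, OR that the entries partition into $O(1)$ blocks of equal value. Thus the real first step is to show: the number of \emph{distinct values} among $s_1,\dots,s_{2\ell}$, or the number of indices at which the sequence strictly increases, is bounded by a constant $k_0 = k_0(r,\epsilon)$.

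Assuming that structural fact, the count finishes cleanly: a non-decreasing integer sequence of length $O(\log m)$ taking at most $k_0$ distinct values, each value in $\{0,\dots,N\}$ with $N = O(\log m)$, is determined by the $k_0$ values together with the $k_0$ positions where changes occur; the values contribute $\binom{N+1}{k_0} = O((\log m)^{k_0})$ choices and the positions contribute $\binom{2\ell}{k_0} = O((\log m)^{k_0})$ choices, so the total is $O((\log m)^{2k_0})$, and since $\log m = o(m^\delta)$ for every $\delta>0$ this is $O(m)$, in particular bounded by a polynomial in $m$ whose degree $k$ depends only on $k_0$, hence only on $r$ and $\epsilon$. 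Combining with Lemma~\ref{lexreduce} (each iteration strictly decreases the signature lexicographically) this also bounds the number of iterations of GrowTransversal, which is the point of the lemma.

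\medskip
\noindent\emph{Main obstacle.} The hard part is precisely pinning down which structural quantity is genuinely $O(1)$: the naive bounds give length and entry-size both $\Theta(\log m)$, whose product in the exponent is fatal, so one must extract a constant-sized ``fingerprint'' of the signature vector (number of distinct values, or number of ascents, or the specific subset-of-a-bounded-interval encoding used in~\cite{Annamalai2016, Annamalai2017}) and prove rigorously — using feasibility of $(\mu,U,\rho)$ and the layer-growth inequalities from Lemma~\ref{allbigx} and Lemma~\ref{nocollapse} — that this fingerprint determines the vector and ranges over only polynomially many values. Verifying that the chosen encoding is both injective and has polynomial range is where the feasibility constants and the base $b = U/(U-\mu\rho)$ must be used carefully; everything else is bookkeeping with $\log m$.
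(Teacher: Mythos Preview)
You had the correct argument in hand and then talked yourself out of it. The encoding you call the ``standard trick''---mapping the non-decreasing sequence $|s_1|\le\cdots\le|s_{2\ell}|$ to the set $\{|s_j|+j:1\le j\le 2\ell\}\subseteq\{1,\dots,N+2\ell\}$---is exactly what the paper does (the paper phrases it via the shifted vector $s^+=(s_1-1,s_2+2,\dots,s_{2\ell}+2\ell)$, using the alternating sign pattern so that $|s_j^+|=|s_j|+j$). Since $N=O(\log m)$ and $2\ell\le 2c\log m$, the set $\{1,\dots,N+2\ell\}$ has size $(R+2c)\log m$ for a constant $R+2c$ depending only on $r,\epsilon$, and the total number of subsets is at most $2^{(R+2c)\log m}=m^{R+2c}$, which \emph{is} polynomial in $m$. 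Your sentence ``since $N+L=O(\log m)$ and\dots\ this still fails'' is simply wrong: $2^{O(\log m)}$ is polynomial, not superpolynomial.

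Everything after that point in your plan is therefore unnecessary and also unjustified. The claimed ``structural fact'' that the number of distinct values (or ascents) among the $s_j$ is bounded by a constant $k_0(r,\epsilon)$ is not something you prove, and there is no reason to expect it to be true: the layer sizes $|X_i|,|Y_i|$ genuinely grow geometrically across $\Theta(\log m)$ layers, so the signature entries (which are logarithms of these sizes) should take $\Theta(\log m)$ distinct values, not $O(1)$. Drop the second half of the plan entirely; the proof is just the entry bound $|s_j|\le R\log m$, the length bound $2\ell\le 2c\log m$, the monotonicity from Lemma~\ref{absval}, the shift-by-index injection into subsets of $\{1,\dots,\lfloor(R+2c)\log m\rfloor\}$, and the count $2^{(R+2c)\log m}=m^{R+2c}$.
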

\begin{proof}[Proof:]
For each layer of $T$, the signature vector of $T$ contains two
coordinates. Thus, by Lemma~\ref{fewlayers}, the signature vector of
$T$ has at most $2c\log m$ coordinates where
$c=\frac{1}{\log[1+\rho(1-\mu)]}$.  
Also, by Lemma~\ref{absval}, the coordinates are non-decreasing in
absolute value and so the absolute   
 value of the final (finite) coordinate is an upper bound on the absolute value
 of each coordinate in the signature vector. By Definition~\ref{sig},
 the final coordinate is $\left\lfloor
 \log_b\left(\left[\frac{\rho^{-\ell}}{(1-\mu)^\ell}\right]|Y_\ell|\right)\right\rfloor$. As $\ell\le c\log (m)$ (by Lemma~\ref{fewlayers})
 and $|Y_\ell|\le m$, the absolute value of each coordinate of the
 signature vector is bounded above by  
\begin{align*}
\log_b \left[\frac{\rho^{-c\log(m)}}{(1-\mu)^{c\log(m)}}m\right]&=\log_b m+\log_b \rho^{-c\log(m)}-\log_b(1-\mu)^{c\log(m)}\\
&=\log_b m-c[\log_b(\rho)]\log(m)-c[\log_b(1-\mu)]\log(m)\\
&=\left[\frac1{\log(b)}-c[\log_b(\rho)]-c[\log_b(1-\mu)]\right]\log(m).
\end{align*} 
Let $R=\left[\frac1{\log(b)}-c[\log_b(\rho)]-c[\log_b(1-\mu)]\right]$ and note
  that $R$ is a fixed constant that depends only on $r$ and $\epsilon$
  (since $b$ and $c$ depend only on $\mu$, $U$, and $\rho$, which in
  turn depend only on $r$ and $\epsilon$).

Now to each signature vector ${s=(s_1,s_2,\dots,s_{2\ell-1},
s_{2\ell},\infty)}$ we associate the vector 
$s^+=(s_1-1,s_2+2,\dots,s_{2\ell-1}-(2\ell-1),
s_{2\ell}+2\ell,\infty)$. Then the final coordinate of $s^+$ is at most
$R\log m+2\ell\leq(R+2c)\log m$. Since the coordinates of $s$ are
non-decreasing in 
absolute value (and considering the sign pattern), the coordinates of $s^+$
are strictly increasing in absolute value. Thus each vector $s^+$
corresponds to a distinct subset of the set
${\{1,\ldots,\lfloor(R+2c)\log m\rfloor\}}$. Hence the total number of
vectors $s^+$ (and hence the total number of signature vectors) is at
most $2^{(R+2c)\log m}$. This completes the proof.
\end{proof}

(We remark that the idea of the last paragraph was suggested by a
referee of~\cite{Annamalai2017}, see~\cite{AnnamalaiThesis, Annamalai2017}.)

We now complete the proof of Theorem~\ref{main}.

\begin{proof}[Proof of Theorem~\ref{main}:]

Let FindITorBD be the following algorithm.

\begin{algorithmic}[1]
\Function{FindITorBD}{$G;V_1,\dots,V_m$}
\State $M:=\emptyset$
\For{$i=1,\dots,m$}
	\State Choose an $A\in\{V_1,\dots, V_m\}$ such that $A\cap M=\emptyset$
	\State $(M,T,x):=\text{GrowTransversal}(M,A)$
	\If{$x=1$}
		\State ${\mathcal B}:=A(Y_{\leq\ell})$
		\For{$j=1,\dots,\ell$}
			\State $(X'_j,Y'_j):=\text{BuildLayer}((L_0,\dots,L_{j-1}),X_j,Y_j)$
			\EndFor
		\State ${\mathcal B}:={\mathcal B}\setminus A^U$
		\State ${\mathcal B}:={\mathcal B}\setminus\left(\bigcup\limits_{i=1}^\ell A(X'_i\setminus X_i)\right)$
		\State $D:=X'_{\le\ell}\cup Y'_{\le\ell}\cup X_{\ell+1}\cup Y_{\ell+1}\cup S$ as in Lemma~\ref{allbigx}
		\State \Return $\cB$, $D$ and \lq\lq $G_{\mathcal B}$ is dominated by $D$\rq\rq and terminate
	\EndIf
	\State $M:= M$ returned by GrowTransversal
\EndFor
\State \Return $M$ and \lq\lq $M$ is an independent transversal of $G$\rq\rq
\EndFunction
\end{algorithmic}\vspace{6pt}

By Lemma~\ref{lexreduce}, every iteration of GrowTransversal reduces
the lexicographic value of the signature vector of an alternating tree
$T$ with respect to a PIT $M$. Furthermore, Lemma~\ref{fewsigs}
implies that the number of such signature vectors is bounded by $m^f$
where $f$ depends only on $r$ and $\epsilon$. Thus, GrowTransversal
terminates after at most $m^f$ iterations. It can be
easily verified (see~\cite{thesis}) that 
each iteration of GrowTransversal can be implemented to run in time
$O(|V(G)|^4)$. Since  GrowTransversal is implemented at most $m$ times
in FindITorBD, and the other steps of FindITorBD are easily
implemented in $O(|V(G)|^4)$ operations, the runtime of FindITorBD is
$O(|V(G)|^4m^{f+1})$ and thus is polynomial in $|V(G)|$ because $r$
and $\epsilon$ are fixed constants.

It remains to show that FindITorBD returns one of the two stated outcomes.
FindITorBD starts with $M=\emptyset$ and runs GrowTransversal at most $m$ times. Note that because of the augmentation in line 17 of GrowTransversal, the PIT $M$ at the end of an iteration of GrowTransversal covers one more vertex class than the PIT covered at the start of the iteration. Also, the PIT at the end of one iteration of GrowTransversal is the initial PIT of the next iteration. 

Suppose FindITorBD terminates during iteration $i$ of
GrowTransversal, so that GrowTransversal terminates before completing the
$m^\text{th}$ iteration. Then the $i^\text{th}$ iteration of
GrowTransversal returned $(M,T,1)$ for some alternating tree $T$ and
PIT $M$. Since the sets $\mathcal B$ and $D$ are defined
identically to how they are defined in the proof of
Lemma~\ref{allbigx}, we then have that $D$ dominates $G_{\mathcal B}$,
and (as stated in Lemma~\ref{allbigx}) $\cB$ and $D$ have the
properties stated in the conclusions of
Theorem~\ref{main}. 

Suppose FindITorBD does not terminate before completing all $m$
iterations of GrowTransversal. Then all $m$ vertex classes contain a
vertex in the final \lq\lq partial\rq\rq\hspace{1pt} IT $M$. Hence $M$
is an IT of $G$. Therefore, FindITorBD returns an IT in $G$. 
\end{proof}

\section{Applications}\label{apps}
In this section, we briefly discuss some applications of
Theorem~\ref{main}. In particular, we discuss applications to
hypergraph matching (Subsection~\ref{matchings}),  
circular chromatic index (Subsection~\ref{circular}), strong colouring
(Subsection~\ref{strongcolour}), 
and hitting
sets for maximum cliques (Subsection~\ref{maxcliques}). Precise
  details of these applications, as well as others, appear
  in~\cite{thesis}.

\subsection{Hypergraph Matchings}\label{matchings}

Here we consider a hypergraph version of Hall's Theorem for bipartite graphs.
An {\it $r$-uniform bipartite hypergraph} $H=(A,B,E)$ is a hypergraph
on a vertex set that is partitioned into two sets $A$ and $B$ such
that ${|e\cap A|=1}$ and ${|e\cap B|=r-1}$ for each edge
$e\in E$. A {\it perfect matching} in $H$ is a subset $M\subseteq E$
of pairwise disjoint edges of $H$ that saturates $A$, in other words
$|M|=|A|$. For a set
$S\subseteq A$, we write $E_S=\{e\in E:|e\cap S|=1\}$ for the set of
hyperedges in $H$ incident to $S$. For a collection of edges
$F\subseteq E$, we denote by  $\tau_B(F)$ the smallest cardinality of a
$B$-{\it cover} of $F$, that is, a subset
$T\subseteq B$ such that $|e\cap T|\neq\emptyset$ for each $e\in F$.
The following generalisation of Hall's Theorem
from~\cite{Haxell1995} 
provides a condition under which $H$ admits a perfect matching. 

\begin{theorem}\label{hyper}
Let $H=(A,B,E)$ be an $r$-uniform bipartite hypergraph. If $$\tau_B(E_S)>(2r-3)(|S|-1)$$ for all $S\subseteq A$, then $H$ admits a perfect matching.
\end{theorem}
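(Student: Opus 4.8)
The plan is to deduce Theorem~\ref{hyper} from Theorem~\ref{maxtrans} (or equivalently the algorithmic Theorem~\ref{main}) by encoding perfect matchings of $H$ as independent transversals of an auxiliary graph. First I would build a graph $G$ whose vertex set is the edge set $E$ of $H$, and partition $V(G)$ into classes $V_a = E_{\{a\}}$ (the hyperedges meeting a given vertex $a\in A$), one class for each $a\in A$; since $H$ is $r$-uniform bipartite, the $E_{\{a\}}$ do indeed partition $E$. I would then put an edge of $G$ between two hyperedges $e,f\in E$ precisely when $e\cap f\cap B\neq\emptyset$, i.e. they share a vertex in $B$. With this setup an independent transversal of $G$ is exactly a choice of one hyperedge $M(a)$ for each $a\in A$ such that no two chosen hyperedges intersect in $B$; because distinct classes already correspond to distinct vertices of $A$, the chosen hyperedges are pairwise disjoint, and $|M|=|A|$, so an IT of $G$ is precisely a perfect matching of $H$.

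Next I would verify the domination hypothesis of Theorem~\ref{maxtrans} for $G$ using the assumption $\tau_B(E_S)>(2r-3)(|S|-1)$. Fix $\cB\subseteq\{V_a:a\in A\}$, say $\cB=\{V_a:a\in S\}$ for some $S\subseteq A$, so that $\bigcup_{V_a\in\cB}V_a=E_S$. Suppose some set $D\subseteq E_S$ of size at most $2(|\cB|-1)=2(|S|-1)$ dominates $G_\cB$. Each $d\in D$ is a hyperedge with $|d\cap B|=r-1$, so the set $T=\bigcup_{d\in D}(d\cap B)\subseteq B$ has $|T|\le (r-1)|D|\le (2r-2)(|S|-1)$. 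The key observation is that $T$ must actually be a $B$-cover of $E_S$: for every hyperedge $e\in E_S$ that is not itself in $D$, domination of $G_\cB$ gives some $d\in D$ with $ed\in E(G)$, i.e. $e\cap d\cap B\neq\emptyset$, so $e\cap T\neq\emptyset$; and for $e\in D$ we have $e\cap T\supseteq e\cap B\neq\emptyset$ as well. Hence $\tau_B(E_S)\le |T|\le(2r-2)(|S|-1)$, which is not yet the contradiction we want --- this is where the argument needs sharpening, and I expect this to be the main obstacle.

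To close the gap between $(2r-2)(|S|-1)$ and the hypothesis bound $(2r-3)(|S|-1)$, I would argue more carefully that $D$ can be taken to contain a vertex of $G_\cB$ in \emph{every} class of $\cB$ that it dominates, or rather exploit that a minimal dominating set of a constellation has the structure from Definition~\ref{constell}. In fact the sharper route is to use the refinement after Theorem~\ref{maxtrans}: if $G$ has no IT then some $G_\cB$ is dominated by the vertex set $V(K)$ of a constellation $K$ for $\cB$. A constellation has components that are stars covering $|\cB|-1$ classes, its leaves form an IT of those $|\cB|-1$ classes, and $|V(K)|\le 2(|\cB|-1)$; crucially the leaves are pairwise disjoint hyperedges, so when we form $T=\bigcup_{d\in V(K)}(d\cap B)$ the leaf-hyperedges contribute disjoint $(r-1)$-sets while each star centre is \emph{blocked} by one of its own leaves through a shared $B$-vertex, letting us save one $B$-vertex per star and bring the count down to $(2r-3)(|\cB|-1)$. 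Carrying out this counting precisely (number of stars, number of leaves, the overlap bookkeeping) is the routine-but-delicate part; once it yields $\tau_B(E_S)\le (2r-3)(|S|-1)$ we contradict the hypothesis, so no such $\cB$ exists, Theorem~\ref{maxtrans} applies, $G$ has an IT, and $H$ has a perfect matching.
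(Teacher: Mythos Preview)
Your proposal is correct and follows essentially the same route as the paper: the same auxiliary graph $G^H$ on vertex set $E$ with classes indexed by $A$, the same identification of ITs with perfect matchings, and the same use of the constellation refinement of Theorem~\ref{maxtrans} to get the sharper $(2r-3)(|\cB|-1)$ bound on the $B$-cover instead of the naive $(2r-2)(|\cB|-1)$. The only cosmetic difference is in the bookkeeping of Claim~\ref{cover}: the paper counts per component as $(r-1)+(r-2)|L_C|$ (centre first, then each leaf adds at most $r-2$ new $B$-vertices since it meets the centre), whereas you count the $(r-1)(|\cB|-1)$ disjoint leaf contributions first and then observe each centre adds at most $r-2$ new $B$-vertices; both sum to $(2r-3)(|\cB|-1)$ using that the number of components is at most $|\cB|-1$.
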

It was shown in~\cite{Haxell1995} that Theorem~\ref{hyper} is best
possible for every $r$. Note that when $r=2$ it is (the nontrivial
direction of) Hall's Theorem. 

In fact Theorem~\ref{hyper} is a special case of
Theorem~\ref{maxtrans}, as can be seen from the following
argument. Given $H=(A,B,E)$, construct an auxiliary graph $G^H$ with
vertex set $E$, in which vertices $e$ and $f$ are adjacent if and only
if $e\cap f\cap B\not=\emptyset$. Consider the vertex partition
of $G^H$ given by assigning $e$ and $f$ to the same
vertex class if and only if $e\cap f\cap A\not=\emptyset$. Thus
the vertex classes are indexed by $A$. With these definitions, a set
$M\subseteq E$ is a perfect 
matching of $H$ if and only if $M\subseteq V(G^H)$ is an IT of $G^H$.

By Theorem~\ref{maxtrans} applied to $G^H$ (using also the comment before
Definition~\ref{constell}), 
if $H$ does not have a perfect matching, then there exists a subset
$\cB$ of vertex classes (indexed by a set $S(\cB)\subseteq A$) that is
dominated by $V(K)$ for a 
constellation $K$ of $\cB$. Thus $T=\bigcup_{e\in V(K)}e\cap B$ is a set of
vertices of $H$ forming a $B$-cover of $E_{S(\cB)}$. Then the
following claim gives an immediate contradiction to the assumption of
Theorem~\ref{hyper}, thus completing the proof.
\begin{claim}\label{cover}
Let $\cB$ be a subset of the vertex classes of $G^H$, and let $K$ be
constellation for $\cB$. Then 
$|\bigcup_{e\in V(K)}e\cap B|\leq(2r-3)(|{S(\cB)}|-1)$.
\end{claim}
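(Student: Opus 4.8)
The plan is to bound $|\bigcup_{e\in V(K)}e\cap B|$ by controlling two things: how many edges $e$ appear in $V(K)$, and how much each contributes to the union inside $B$. Recall that $K$ is a constellation for $\cB$, so its components are stars with at least two vertices; write $c$ for the number of stars (equivalently, the number of centres). Since each star has a centre and a nonempty set of leaves, $K$ has at least $2c$ vertices, and by Definition~\ref{constell} the set of leaves of $K$ is an IT of $|\cB|-1$ of the classes of $\cB$; in particular $|V(K)|\le 2(|\cB|-1)$ and the number of leaves is exactly $|\cB|-1$, so the number of centres is $c\le |\cB|-1$. Translating back to the hypergraph, the classes of $\cB$ are indexed by $S(\cB)\subseteq A$, so $|\cB|=|S(\cB)|$ and hence $c\le |S(\cB)|-1$ and the number of leaf-edges is exactly $|S(\cB)|-1$.

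Next I would estimate the contribution of the leaves and centres separately. Each leaf-edge $f$ contributes at most $|f\cap B|=r-1$ vertices to $\bigcup_{e\in V(K)}e\cap B$, giving at most $(r-1)(|S(\cB)|-1)$ from the leaves. For the centres, the key observation is that a centre edge $e$ is adjacent (in $G^H$) to each of its leaves, and adjacency in $G^H$ means $e\cap f\cap B\neq\emptyset$. So each leaf of the star at $e$ accounts for at least one vertex of $e\cap B$ that is \emph{not new} — it already lies in some leaf-edge's $B$-part. Since a star centred at $e$ has at least one leaf, $e$ shares at least one $B$-vertex with an already-counted edge, so $e$ contributes at most $(r-1)-1 = r-2$ \emph{new} vertices to the union. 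Summing over the $c\le |S(\cB)|-1$ centres gives at most $(r-2)(|S(\cB)|-1)$ additional vertices. Adding the two bounds yields $|\bigcup_{e\in V(K)}e\cap B|\le (r-1)(|S(\cB)|-1)+(r-2)(|S(\cB)|-1)=(2r-3)(|S(\cB)|-1)$, as required.

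I expect the main subtlety to be making the ``at least one non-new vertex per centre'' argument airtight: one must be careful that the $B$-vertex shared between a centre $e$ and one of its leaves $f$ is genuinely being double-counted, i.e. that $f\in V(K)$ and $f$'s contribution was already included. This is immediate because leaves of $K$ are vertices of $K$, hence edges of $H$ in $V(K)$, and their $B$-parts are part of the union we are bounding; so picking any single leaf $f$ of the star at $e$ and any vertex of $e\cap f\cap B$ (nonempty by the adjacency defining $G^H$) gives the desired overlap. A second point to verify is that we may assume without loss of generality that every edge in $V(K)$ is either a centre or a leaf (true by definition of a star forest) and that no edge is counted twice on the ``$(r-1)$ per leaf'' side — but since we only need an upper bound on the union this over-counting is harmless. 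No induction is needed; the argument is a direct counting estimate using only Definition~\ref{constell} and the structure of $G^H$.
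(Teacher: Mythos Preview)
Your proof is correct and follows essentially the same approach as the paper: both arguments exploit the fact that adjacent edges in $G^H$ share a vertex of $B$, so within each star one side (centre or leaf) contributes a full $r-1$ and the other side contributes only $r-2$ new vertices, yielding $(r-1)+(r-2)$ per leaf and hence $(2r-3)(|S(\cB)|-1)$ in total. The only cosmetic difference is that the paper counts component by component, giving the centre the full $r-1$ and each leaf $r-2$, whereas you count leaves first (each $r-1$) and then centres (each $r-2$); the arithmetic and the underlying idea are identical.
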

\begin{proof}[Proof:]
 Each component $C$ of $K$ corresponds to a
set of edges of $H$, consisting of the centre $e_C$ of the star $C$
and a nonempty set 
$L_C$ of leaves, all of which intersect $e_C$ in $B$. Hence the total
number of vertices of $B$ contained in $\{e_C\}\cup L_C$ is at most 
$(r-1)+(r-2)|L_C|$. By definition $\bigcup_CL_C$ is an IT of $|\cB|-1$
classes of $\cB$, implying that $K$ has 
at most $|{S(\cB)}|-1$ components, and that
$\sum_C|L_C|=|{S(\cB)}|-1$. Therefore 
\begin{align*}
\left|\bigcup_{e\in V(K)}e\cap B\right|&\leq\sum_{C}\left(r-1+(r-2)|L_C|\right)\\
   &\leq(r-1)(|{S(\cB)}|-1)+(r-2)(|{S(\cB)}|-1)=(2r-3)(|{S(\cB)}|-1),
\end{align*}
where the sum is over all components $C$ of $K$.
\end{proof}

The following algorithmic version of Theorem~\ref{hyper} was proved
by Annamalai in~\cite{Annamalai2016, Annamalai2017}.
\begin{theorem}\label{r-partite}
For every fixed choice of $r\ge2$ and $\epsilon>0$, there exists an
algorithm $\mathcal A$ that finds, in time polynomial in the size of
the input, a perfect matching in $r$-uniform bipartite hypergraphs
$H=(A,B,E)$ satisfying $$\tau_B(E_S)>(2r-3+\epsilon)(|S|-1)$$ for all
$S\subseteq A$. 
\end{theorem}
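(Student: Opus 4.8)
The plan is to deduce Theorem~\ref{r-partite} from Theorem~\ref{main} by reusing the reduction to independent transversals that was used above to derive Theorem~\ref{hyper} from Theorem~\ref{maxtrans}. Given an $r$-uniform bipartite hypergraph $H=(A,B,E)$, I would form the auxiliary graph $G^H$ on vertex set $E$, with $e,f$ adjacent iff $e\cap f\cap B\neq\emptyset$, and with the vertex partition indexed by $A$ (so $e,f$ are in the same class iff $e\cap f\cap A\neq\emptyset$). As noted in the excerpt, a perfect matching of $H$ is exactly an IT of $G^H$. The first thing to check is that $G^H$ is $r$-claw-free with respect to this partition: if $e$ had $r$ independent neighbours $f_1,\dots,f_r$ in distinct classes, then each $f_i$ shares a vertex of $B$ with $e$, the $f_i$ are pairwise disjoint in $B$, and they lie in distinct classes, so $e$ would meet at least $r$ distinct vertices of $B$, contradicting $|e\cap B|=r-1$. (Here one uses that two vertices of $G^H$ in distinct classes that are non-adjacent have disjoint intersections with $B$.) So Theorem~\ref{main} applies with this $r$ and with an $\epsilon'$ to be chosen below.

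Next I would run the algorithm FindITorBD from Theorem~\ref{main} on $G^H$ with parameters $r$ and $\epsilon'$. In polynomial time it returns either an IT of $G^H$ — which is the desired perfect matching of $H$, and we are done — or a set $\cB$ of vertex classes and a set $D$ of vertices of $G^H$ with $D$ dominating $G^H_\cB$ in $G^H$, $|D|<(2+\epsilon')(|\cB|-1)$, and moreover $D\supseteq V(K)$ for a constellation $K$ of some $\cB_0\supseteq\cB$ with $|D\setminus V(K)|<\epsilon'(|\cB|-1)$. I then need to show this second outcome cannot occur when $H$ satisfies the hypothesis $\tau_B(E_S)>(2r-3+\epsilon)(|S|-1)$ for all $S\subseteq A$, provided $\epsilon'$ was chosen small enough in terms of $\epsilon$ and $r$. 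Let $S\subseteq A$ index the classes of $\cB$. The key observation is that $T:=\bigcup_{e\in D}(e\cap B)$ is a $B$-cover of $E_S$: every edge $f\in E_S$ is a vertex of $G^H_\cB$ (its $A$-vertex is in $S$, and $G^H_\cB$ is obtained by deleting only within-class edges, which don't affect vertex membership), hence $f$ is dominated by $D$, i.e. some $e\in D$ has $e\cap f\cap B\neq\emptyset$, so $f$ meets $T$. Therefore $\tau_B(E_S)\le|T|\le\sum_{e\in D}|e\cap B|=(r-1)|D|$. This is too weak by itself; the point of the constellation structure is to do better on $V(K)$.

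The heart of the argument is to bound $|T|$ more carefully using the constellation $K$ for $\cB_0$, exactly as in Claim~\ref{cover}. Writing $S_0\subseteq A$ for the index set of $\cB_0$, each component $C$ of $K$ is a star with centre $e_C$ and leaf set $L_C$, all leaves meeting $e_C$ in $B$, so $|\bigcup_{e\in\{e_C\}\cup L_C}(e\cap B)|\le(r-1)+(r-2)|L_C|$; since $\bigcup_C L_C$ is an IT of $|\cB_0|-1$ classes, $K$ has at most $|S_0|-1$ components and $\sum_C|L_C|=|S_0|-1$, giving $|\bigcup_{e\in V(K)}(e\cap B)|\le(2r-3)(|S_0|-1)$. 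The remaining vertices of $D$ contribute at most $(r-1)|D\setminus V(K)|<(r-1)\epsilon'(|\cB|-1)$ extra vertices of $B$. Now I need to relate $|S_0|-1=|\cB_0|-1$ to $|\cB|-1$: since $K$ is a constellation for $\cB_0$ we have $|V(K)|\le2(|\cB_0|-1)$, and $V(K)\subseteq D$ with $|D|<(2+\epsilon')(|\cB|-1)$, so $2(|\cB_0|-1)\ge|V(K)|$ doesn't immediately give an upper bound on $|\cB_0|$. The clean bound to use is $|V(K)|\le|D|<(2+\epsilon')(|\cB|-1)$ together with $|V(K)|\ge|\cB_0|-1$ (every non-trivial star on a constellation has at least one leaf, and the leaves form an IT of $|\cB_0|-1$ classes, hence $K$ has at least... in fact $\sum_C|L_C|=|\cB_0|-1$ so $|V(K)|\ge|\cB_0|-1+(\text{number of components})\ge|\cB_0|-1$, but also we want an upper bound on $|\cB_0|$). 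From $2(|\cB_0|-1)\ge|V(K)|$ we get nothing, so instead use: the centres are distinct and the leaf classes partition $|\cB_0|-1$ of the classes, while one class of $\cB_0$ has no leaf; combined with $|V(K)|\le|D|$ this yields $|\cB_0|-1\le|D|<(2+\epsilon')(|\cB|-1)$ — still not tight enough. This is the main obstacle: I expect I actually do not need $\cB_0$ at all — I should cover $E_S$ (with $S$ the index set of $\cB$, not $\cB_0$) using only those edges of $D$, so that the relevant count is in terms of $|\cB|-1$; the constellation bound then must be applied to the sub-forest of $K$ spanning the classes of $\cB$, or one argues directly that $|T|\le(2r-3)(|\cB_0|-1)+(r-1)\epsilon'(|\cB|-1)$ and that $|\cB_0|-1$ can be absorbed into the $\epsilon$ slack because $|\cB_0\setminus\cB|$ is small — but Theorem~\ref{main} does not bound $|\cB_0\setminus\cB|$, so the correct move is to cover $E_S$ directly. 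Concretely: for $f\in E_S$, $f$ is dominated by $D$, and since $D\supseteq V(K)$ one can route this through $V(K)$ where the star structure gives the saving, with the at most $\epsilon'(|\cB|-1)$ vertices of $D\setminus V(K)$ contributing at most $(r-1)\epsilon'(|\cB|-1)$ to $|T|$; then $\tau_B(E_S)\le|T|\le(2r-3)(|\cB|-1)+(r-1)\epsilon'(|\cB|-1)$, and choosing $\epsilon'=\epsilon/(r-1)$ gives $\tau_B(E_S)\le(2r-3+\epsilon)(|\cB|-1)$, contradicting the hypothesis applied to $S$ (note $|\cB|\ge2$ may be assumed, else the bound is trivial). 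Sorting out this last bookkeeping — making sure that covering $E_S$ goes through $V(K)$ correctly and that $|\cB_0|-1$ never needs to appear with a non-negligible coefficient — is the one genuinely delicate point; everything else is the routine reduction together with Claim~\ref{cover}.
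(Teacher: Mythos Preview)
Your reduction to $G^H$, the $r$-claw-freeness argument, the choice $\epsilon'=\epsilon/(r-1)$, and the observation that $T=\bigcup_{e\in D}(e\cap B)$ is a $B$-cover of $E_{S(\cB)}$ are all correct and match the paper. The gap is exactly where you flagged it: you need to bound $|T|$ in terms of $|\cB|-1$, not $|\cB_0|-1$, and your proposed way around this does not work. Claim~\ref{cover} genuinely gives $(2r-3)(|\cB_0|-1)$, and there is no mechanism to replace $\cB_0$ by $\cB$ by ``routing through $V(K)$'' or by restricting to a sub-forest of $K$ --- a sub-forest need not cover $E_{S(\cB)}$, and Theorem~\ref{main} gives no control on $|\cB_0\setminus\cB|$. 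So the final line $|T|\le(2r-3)(|\cB|-1)+(r-1)\epsilon'(|\cB|-1)$ is asserted without justification.

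The missing idea is to bypass $|\cB_0|$ entirely by expressing the constellation bound in terms of $|V(K)|$ rather than the number of classes. For each component $C$ of $K$, rewrite $(r-1)+(r-2)|L_C|$ as $1+(r-2)|V(C)|$. Since every component of $K$ has at least two vertices, the number of components is at most $|V(K)|/2$, so
\[
\Bigl|\bigcup_{e\in V(K)}e\cap B\Bigr|\le\sum_C\bigl(1+(r-2)|V(C)|\bigr)\le\tfrac{|V(K)|}{2}+(r-2)|V(K)|=\tfrac{(2r-3)|V(K)|}{2}.
\]
Now $|V(K)|=|D|-u$ where $u=|D\setminus V(K)|<\epsilon'(|\cB|-1)$ and $|D|<(2+\epsilon')(|\cB|-1)$, so this is at most $(2r-3)(1+\epsilon'/2)(|\cB|-1)-u(r-3/2)$. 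Adding the contribution $u(r-1)$ from $D\setminus V(K)$ and using $u<\epsilon'(|\cB|-1)$ gives
\[
|T|<\tfrac{u}{2}+\bigl(2r-3+\epsilon'(r-\tfrac32)\bigr)(|\cB|-1)<\bigl(2r-3+\epsilon'(r-1)\bigr)(|\cB|-1)=(2r-3+\epsilon)(|\cB|-1),
\]
which is the contradiction you want. The point is that $|V(K)|\le|D|$ is the bridge from the constellation count to $|\cB|-1$; once you see that, $\cB_0$ never enters the estimate.
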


Here we show that Theorem~\ref{main} is a generalisation of
Theorem~\ref{r-partite}. First note that for every
$r$-uniform bipartite hypergraph $H=(A,B,E)$, the graph $G^H$
is $r$-claw-free with respect to any partition. Indeed,
the neighbours of $e$ forming any independent set in $G^H$
must all contain
distinct vertices of $e\cap B$, and $|e\cap B|=r-1$. Thus  given $r$
and $\epsilon$, we may apply Theorem~\ref{main} with $r$ and
$\epsilon'=\epsilon/(r-1)$ to obtain a polynomial-time algorithm $\mathcal
A$ that finds for each input $H=(A,B,E)$ either
\begin{enumerate}
\item an IT in $G^H$, which is a perfect matching
  in $H$, or
\item a set $\mathcal B$ of vertex classes and a set $D$ of vertices
  of $G^H$ such that $D$ dominates $G_{\cB}$ in $G^H$ and
  $|D|<(2+\epsilon')(|{\mathcal B}|-1)$. Moreover $D$ contains $V(K)$
  for a constellation $K$ of some $\cB_0\supseteq\cB$, where
  $|D\setminus V(K)|<\epsilon'(|\cB|-1)$.
\end{enumerate}
If (1) is the outcome for every input $H$ then $\mathcal A$ is the required
algorithm, so suppose (2) holds for some $H$. Then $D$ is a set of
edges of $H$ such that every edge of $E_{S(\cB)}$ intersects
$T=\bigcup_{e\in D}e\cap B$. For $u=|D\setminus V(K)|$ it is clear that
$|\bigcup_{e\in D\setminus V(K)}e\cap B|\leq u(r-1)$.

Next we estimate
$|\bigcup_{e\in V(K)}e\cap B|$.
As in the proof of Claim~\ref{cover}, for
each component $C$ of the constellation $K$, the number of vertices of
$B$ contained in $\{e_C\}\cup L_C$ is at most
$(r-1)+(r-2)|L_C|=1+(r-2)|V(C)|$. Each component of $K$ has at least
two vertices, so the number of components is at most $|V(K)|/2=(|D|-u)/2$.
Since $|D|<(2+\epsilon')(|{\mathcal B}|-1)$ we find 
\begin{align*}
\left|\bigcup_{e\in V(K)}e\cap B\right|&\leq\sum_C\left(1+(r-2)|V(C)|\right)\\
         &\leq|V(K)|/2+(r-2)|V(K)|\\
      &=(2r-3)|V(K)|/2\\
      &=(2r-3)(|D|-u)/2<(2r-3)(1+\epsilon'/2)(|{\mathcal B}|-1)-u(r-3/2),
\end{align*}
where again the sum is over all components $C$ of $K$. Therefore
\begin{align*}|T|&<u(r-1)+(2r-3)(1+\epsilon'/2)(|{\mathcal
    B}|-1)-u(r-3/2)\\
&= u/2+(2r-3+\epsilon'(r-3/2))(|\cB|-1)\\
   &<(2r-3+\epsilon'(r-1))(|\cB|-1)= (2r-3+\epsilon)(|\cB|-1),
\end{align*}
where in the last line we used that $u<\epsilon'(|\cB|-1)$ and
$\epsilon'=\epsilon/(r-1)$.  
But then this contradicts the assumption
$\tau_B(E_S)>(2r-3+\epsilon)(|S|-1)$ for $S=S(\cB)$, thus proving
Theorem~\ref{r-partite}.

\subsection{Circular Chromatic Index}\label{circular}
A {\it proper circular $p/q$-edge-colouring} of a graph $G$ is a colouring of the edges of $G$ with colours in $\{0,\dots,p-1\}$ such that the difference modulo $p$ of the colours assigned to two adjacent edges is not in $\{-(q-1),-(q-2),\dots,q-1\}$. The smallest ratio $p/q$ for which there is a proper circular $p/q$-edge-colouring of $G$ is called the {\it circular chromatic index} of $G$.

In \cite{Kaiser2004}, Kaiser, Kr\'{a}l, and \v Skrekovski proved the
following result using Theorem~\ref{maxtrans}. 
\begin{theorem}\label{circcubic}
Let $p\in\mathbb N$ with $p\ge2$ and $G$ be a cubic bridgeless graph with girth $$g=\begin{cases}2(2p)^{2p-2}&\text{if }p\ge2\text{ is even}\\2(2p)^{2p}&\text{if }p\ge3\text{ is odd.} \end{cases}$$ Then $G$ admits a proper circular $(3p+1)/p$-edge-colouring.
\end{theorem}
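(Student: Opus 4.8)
The strategy is to reduce the statement about proper circular $(3p+1)/p$-edge-colourings to an independent transversal problem to which Theorem~\ref{maxtrans} can be applied, following the approach of Kaiser, Kr\'al and \v{S}krekovski~\cite{Kaiser2004}. First I would recall that a proper circular $(3p+1)/p$-edge-colouring of $G$ is equivalent to a homomorphism from the line graph $L(G)$ to a certain circular clique (Kneser-type graph) $K_{3p+1,p}$; since $G$ is cubic, $L(G)$ has bounded degree, and the large girth of $G$ forces $L(G)$ to be locally sparse (every short cycle in $L(G)$ comes from a short cycle or a vertex of $G$). The key point is that $K_{3p+1,p}$ has fractional chromatic number exactly $(3p+1)/p$, which is slightly larger than $3$, so a cubic graph whose local structure is tree-like (up to girth $g$) should be colourable.

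\textbf{Setting up the transversal.} Concretely, I would partition the edges of $G$ into classes $V_1,\dots,V_m$ according to some suitable grouping (in~\cite{Kaiser2004} one takes the classes to be edge-orbits of a $2$-factor decomposition, or small connected pieces such as short paths or the edge sets around individual vertices), so that each class $V_i$ admits many ``local'' colourings, and build a graph $\mathcal{G}$ whose vertex set consists of the partial colourings of the classes, with two such partial colourings adjacent when they are incompatible on some shared boundary edge. An independent transversal of $\mathcal{G}$ then assembles into a global proper circular $(3p+1)/p$-edge-colouring of $G$. The sizes $|V_i|$ count the available local colourings and are large (growing with $p$), while the adjacency degrees are controlled by how many incompatible choices a single local colouring can rule out — this is where bridgelessness (to guarantee enough local colourings, via an Eulerian/$2$-factor argument) and cubicity enter.

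\textbf{Verifying the hypothesis of Theorem~\ref{maxtrans}.} The heart of the argument is to check the domination hypothesis: for every $\mathcal{B}\subseteq\{V_1,\dots,V_m\}$, the graph $\mathcal{G}_{\mathcal{B}}$ is not dominated by any set of size at most $2(|\mathcal{B}|-1)$. This is proved by a counting/expansion argument that uses the girth condition: because $G$ has girth at least $g$, any small collection of classes spans a subgraph of $G$ that is ``almost a forest'', and on a forest one can extend local circular colourings greedily (the circular clique $K_{3p+1,p}$ has enough colours to colour any tree and even to escape a bounded number of forbidden patterns), so no small dominating set of incompatible partial colourings can exist. Quantitatively one shows that $|\bigcup\mathcal{B}|$ local colourings cannot all be killed by $2(|\mathcal{B}|-1)$ choices, using that each class has at least $cg$-many — or exponentially many in $p$ — valid local colourings while each potential dominator eliminates only a bounded number; the girth values $2(2p)^{2p-2}$ (even $p$) and $2(2p)^{2p}$ (odd $p$) are exactly what make this inequality go through.

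\textbf{Main obstacle.} The hard part is the domination estimate, i.e.\ proving that $\mathcal{G}_{\mathcal{B}}$ resists small dominating sets; this requires the careful structural analysis of~\cite{Kaiser2004} relating short subgraphs of $G$ to the extendability of partial circular colourings of $L(G)$, and the precise bookkeeping that pins down the girth thresholds in the statement (and the asymmetry between even and odd $p$, which comes from a parity obstruction in extending $K_{3p+1,p}$-colourings around odd cycles). Once that estimate is in hand, Theorem~\ref{maxtrans} applies verbatim to produce the independent transversal, which translates back into the desired proper circular $(3p+1)/p$-edge-colouring of $G$. I would not reproduce the full calculation here but cite~\cite{Kaiser2004} for the details, noting that the algorithmic version follows by instead invoking Theorem~\ref{main} with $\epsilon$ small, at the cost of a slightly larger (but still the same order) girth requirement — in fact, as remarked in the introduction, for circular chromatic index no weakening at all is needed.
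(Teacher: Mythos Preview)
Your high-level plan (reduce to an independent transversal problem and apply one of the IT theorems) is right, but the concrete construction you sketch is not the one used, and this causes you to misidentify both which theorem is invoked and how the girth hypothesis enters.

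In the actual argument (as described in the paper following~\cite{Kaiser2004}), one first fixes a $1$-factor $F$ of $G$ (this is where bridgelessness is used, via Petersen's theorem), so that $G-F$ is a $2$-factor. The auxiliary graph $G'$ has one vertex class $V_i$ for each \emph{odd} cycle $C_i$ of $G-F$, with $|V_i|=|V(C_i)|$; the vertices of $V_i$ index the possible ``jump'' positions along $C_i$ needed to make the circular colouring close up around an odd cycle. The key structural computation in~\cite{Kaiser2004} is a bound on the maximum degree of $G'$: $\Delta(G')\le(2p)^{2(p-1)}$ for even $p$ and $\Delta(G')\le(2p)^{2p}$ for odd $p$. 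One then applies Theorem~\ref{maxdeg}, not the domination criterion of Theorem~\ref{maxtrans}: it suffices that $|V_i|\ge 2\Delta(G')$, and since $|V_i|=|V(C_i)|\ge g$, the stated girth thresholds give exactly this. The domination/forest/extension argument you outline is not needed, and the ``hard part'' you flag is not where the work lies.

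In particular, your description of how girth enters (``small collections of classes span almost a forest'') is off: girth enters simply as a lower bound on the odd cycle lengths, hence on the class sizes. This is also why the algorithmic version needs no weakening: the class sizes $|V_i|$ are lengths of \emph{odd} cycles, so $|V_i|\ge g+1$ when $g$ is even, which already supplies the extra $+1$ required by Corollary~\ref{maxdegalg}. Your vertex classes of ``partial colourings'' would have a very different size/degree balance and would not connect to the stated girth values.
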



To prove Theorem~\ref{circcubic}, Kaiser, Kr\'{a}l, and \v Skrekovski
proved the existence of an IT in a certain auxiliary graph $G'$ constructed
using $G$, $p$, and a fixed 1-factor $F$ of $G$. The vertices of $G'$
are partitioned into $m$ classes $V_i$, one for each odd cycle $C_i$ of
$G-F$, where $|V_i|=|V(C_i)|$. The maximum degree of $G'$ is at most
$(2p)^{2(p-1)}$ if $p$ is even, and at most $(2p)^{2p}$ if $p$ is
odd. Hence by Theorem~\ref{maxdeg} there exists an IT
in $G'$ provided $|V_i|\geq 2(2p)^{2(p-1)}$ (respectively $|V_i|\geq
2(2p)^{2p}$) for each $i$. Given such an 
IT in $G'$, the authors explicitly provide the required proper circular
$(3p+1)/p$-edge-colouring of $G$.

We may apply Corollary~\ref{maxdegalg} (the algorithmic version of
Theorem~\ref{maxdeg}) to obtain an algorithmic
version of Theorem~\ref{circcubic}, and in fact in this case no
weakening of the result at all is necessary. This is because the
sizes of the vertex classes $V_i$ are exactly the lengths of the odd
cycles in $G-F$, and hence girth $g\geq 2(2p)^{2(p-1)}$ (respectively
$g\geq 2(2p)^{2p}$) is enough to ensure the extra one in the lower
bounds on the $|V_i|$ required by Corollary~\ref{maxdegalg}. Note also that the number $m$ of odd cycles in $G-F$ is clearly less than $|V(G)|$. 

\begin{corollary}\label{largerg}
Let $p\in\mathbb N$ with $p\ge2$ be given. Then there exists an algorithm that takes as input any cubic bridgeless graph $G$
with girth $$g=\begin{cases}2(2p)^{2p-2}&\text{if }p\ge2\text{ is
  even}\\2(2p)^{2p}&\text{if }p\ge3\text{ is odd} \end{cases}$$ 
and finds, in time polynomial in $|V(G)|$, a proper circular
$(3p+1)/p$-edge-colouring of $G$.
\end{corollary}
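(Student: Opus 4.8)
The plan is to retrace the proof of Theorem~\ref{circcubic} given by Kaiser, Kr\'al and \v Skrekovski in~\cite{Kaiser2004}, which is constructive apart from its single appeal to Theorem~\ref{maxdeg}, and to replace that appeal by an invocation of the algorithmic Corollary~\ref{maxdegalg}, checking along the way that every step runs in time polynomial in $|V(G)|$. Concretely, I would first fix a $1$-factor $F$ of $G$: such an $F$ exists by Petersen's theorem (as $G$ is cubic and bridgeless) and can be found in polynomial time by a polynomial-time maximum matching algorithm. The graph $G-F$ is $2$-regular, so its components, and in particular its odd cycles $C_1,\dots,C_m$, can be listed in linear time; note that $m<|V(G)|$. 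I would then build the auxiliary graph $G'$ of~\cite{Kaiser2004} from $(G,p,F)$. That construction is explicit, and from it one reads off both that the vertex partition $(V_1,\dots,V_m)$ of $G'$ satisfies $|V_i|=|V(C_i)|$ (so $|V(G')|=\sum_i|V(C_i)|\le|V(G)|$) and that $G'$ has maximum degree at most $\Delta:=(2p)^{2(p-1)}$ when $p$ is even and at most $\Delta:=(2p)^{2p}$ when $p$ is odd; all of this is done in time polynomial in $|V(G)|$.

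The crux is the size check. In both cases $g=2\Delta$. Every cycle of $G-F$ is a cycle of $G$ and hence has at least $g$ vertices; but each $C_i$ has an odd number of vertices whereas $g$ is even, so in fact $|V_i|=|V(C_i)|\ge g+1=2\Delta+1$ for every $i$. Thus $G'$ together with the partition $(V_1,\dots,V_m)$ meets the hypotheses of Corollary~\ref{maxdegalg} with this value of $\Delta$, and that corollary produces an independent transversal of $G'$ in time polynomial in $|V(G')|\le|V(G)|$, the degree of the polynomial depending only on $\Delta$ and hence only on the fixed parameter $p$. Finally I would feed this independent transversal into the explicit procedure of~\cite{Kaiser2004} that converts any independent transversal of $G'$ into a proper circular $(3p+1)/p$-edge-colouring of $G$; being explicit, it too runs in polynomial time. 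Composing the stages yields the claimed algorithm.

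Since all the combinatorial content is imported, the only points that genuinely need attention are: (i) confirming that the two ingredients borrowed from~\cite{Kaiser2004} --- the construction of $G'$ and the back-translation of an independent transversal of $G'$ into an edge-colouring of $G$ --- can indeed be carried out in polynomial time (they can, since~\cite{Kaiser2004} describes both explicitly); and (ii) the observation, already recorded above, that the evenness of $g$ forces the odd cycles of $G-F$ to have length at least $g+1$, which supplies for free the extra ``$+1$'' that Corollary~\ref{maxdegalg} requires over Theorem~\ref{maxdeg}, so that the girth hypothesis need not be strengthened at all. Everything else --- bounding $|V(G')|$ and $m$ by $|V(G)|$, and noting that for fixed $p$ all exponents that arise are constants --- is routine, so I do not anticipate a serious obstacle.
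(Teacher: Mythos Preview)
Your proposal is correct and matches the paper's own argument essentially line for line: replace the appeal to Theorem~\ref{maxdeg} in~\cite{Kaiser2004} by Corollary~\ref{maxdegalg}, and use the parity observation that each $|V_i|$ is the length of an odd cycle while $g=2\Delta$ is even, so $|V_i|\ge g+1=2\Delta+1$, which absorbs the extra ``$+1$'' at no cost. Your additional remarks about finding $F$ via Petersen's theorem and checking that the auxiliary construction and back-translation run in polynomial time are sensible elaborations of steps the paper leaves implicit.
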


\subsection{Strong Colouring}\label{strongcolour}
Let $k$ and $n$ be positive integers. Let $G$ be a graph with $n$
vertices and let $(V_1,\dots,V_m)$ be a vertex partition of $V(G)$ such
that ${|V_i|\le k}$ for all $i$. A graph $G$ is {\it strongly
  $k$-colourable with respect to $(V_1,\dots,V_m)$}
if there is a colouring of $G$ with $k$ colours so that for each vertex
class, each colour is assigned to at most one vertex of the class. If
$G$ is strongly $k$-colourable with respect to every vertex partition
of $V(G)$ into classes of size at most $k$, we say $G$ is {\it
  strongly $k$-colourable}. The {\it strong chromatic number} of a
graph $G$, denoted $s\chi(G)$, is the minimum $k$ such that $G$ is
strongly $k$-colourable. This notion was introduced independently by
Alon~\cite{Alon1988, Alon1992} and Fellows~\cite{Fellows1990}.

The best known general bound for the strong chromatic number of graphs
$G$ in
terms of their maximum degree $\Delta(G)$ is
$s\chi(G)\leq3\Delta(G)-1$, proved in~\cite{Haxell2004}. (See
also~\cite{Haxell2008} for an asymptotically better bound.)
In \cite{Aharoni2007}, Aharoni, Berger, and Ziv gave a nice simplification
of the proof in~\cite{Haxell2004}, that gives the bound
$s\chi(G)\leq3\Delta(G)$. 
Their argument uses a slight strengthening of Theorem~\ref{maxdeg},
which states that if $G$ is a graph with maximum degree $\Delta$, and
if  $|V_i|\ge2\Delta$ for each $i$, then for each vertex $v$ there exists
an IT of $G$ containing $v$. This follows immediately from
Theorem~\ref{maxtrans} applied to $G$ with the partition
$(\{v\},V_2,\ldots,V_m)$, assuming without loss of generality that
$v\in V_1$. (See the note after the statement of
Theorem~\ref{maxtrans} in the Introduction.) 

To make the argument of~\cite{Aharoni2007} algorithmic, we just apply
Theorem~\ref{main} instead of Theorem~\ref{maxtrans} in the previous
paragraph. This gives the following slightly strengthened version of
Corollary~\ref{maxdegalg}. 

\begin{corollary}\label{hasv}
Let $\Delta\in\mathbb N$ be given. Then there exists an algorithm $\mathcal A$
that takes as input any graph $G$ with maximum degree $\Delta$ and vertex
 partition $(V_1,\ldots,V_m)$ such that ${|V_i|\ge2\Delta+1}$ for each
 $i$, and any $v\in V(G)$,
 and finds, in time polynomial in $|V(G)|$, an independent transversal in
 $G$ that contains $v$.
\end{corollary}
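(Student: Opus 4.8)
The plan is to imitate the derivation (given just before the statement) of the non-algorithmic ``IT containing $v$'' result from Theorem~\ref{maxtrans}, but with Theorem~\ref{main} in place of Theorem~\ref{maxtrans}. Assume without loss of generality that $v\in V_1$, and that $\Delta\ge1$ (if $\Delta=0$ then $G$ has no edges and any transversal works). Let $G'$ be the subgraph of $G$ induced by $\{v\}\cup V_2\cup\cdots\cup V_m$, equipped with the vertex partition $(\{v\},V_2,\dots,V_m)$. Since $G'$ is an induced subgraph of $G$ it has maximum degree at most $\Delta$, and hence is $(\Delta+1)$-claw-free with respect to this partition. The algorithm $\mathcal A$ simply runs FindITorBD on $G'$ with $r=\Delta+1$ and $\epsilon=1/\Delta$; by Theorem~\ref{main} this runs in time polynomial in $|V(G')|\le|V(G)|$, with exponent depending only on $\Delta$.

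Next I would show that FindITorBD cannot produce Output~(2) on this instance. Suppose for contradiction it returns a set $\mathcal B$ of vertex classes and a set $D$ of vertices of $G'$ with $D$ dominating $G'_{\mathcal B}$ and $|D|<(2+\epsilon)(|\mathcal B|-1)$. Then $|\mathcal B|\ge2$, since otherwise the right-hand side is nonpositive. Every vertex of $G'$ has at most $\Delta$ neighbours, so counting vertex--($D$-neighbour) incidences gives $|V(G'_{\mathcal B})|\le\Delta|D|<\Delta(2+\epsilon)(|\mathcal B|-1)=(2\Delta+1)(|\mathcal B|-1)$. On the other hand, $\mathcal B$ contains at least $|\mathcal B|-1$ of the classes $V_2,\dots,V_m$, each of size at least $2\Delta+1$, so $|V(G'_{\mathcal B})|\ge(2\Delta+1)(|\mathcal B|-1)$, a contradiction. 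Hence FindITorBD returns Output~(1): an independent transversal $M$ of $G'$.

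Finally I would check that $M$ is as required. Since $\{v\}$ is one of the classes of $G'$, necessarily $v\in M$. As $M$ is independent in $G'$ it is independent in $G$, and as $M$ meets each of $\{v\},V_2,\dots,V_m$ it meets each of $V_1,\dots,V_m$ (using $v\in V_1$); thus $M$ is an independent transversal of $G$ that contains $v$, and $\mathcal A$ outputs it. There is no genuine obstacle in this argument: the only point requiring care is that the artificially created singleton class $\{v\}$ has size $1<2\Delta+1$, so Corollary~\ref{maxdegalg} cannot be invoked verbatim and one must re-run the short Output-(2)-exclusion count above by hand --- but that singleton contributes only a single vertex, which is harmlessly absorbed into the ``$-1$'' of $|\mathcal B|-1$.
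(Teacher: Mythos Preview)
Your proof is correct and follows exactly the approach the paper indicates: replace $V_1$ by the singleton $\{v\}$, apply Theorem~\ref{main} with $r=\Delta+1$ and $\epsilon=1/\Delta$, and rule out Output~(2) by the same degree count that derives Theorem~\ref{maxdeg} from Theorem~\ref{maxtrans}. The paper states this only in one sentence (``we just apply Theorem~\ref{main} instead of Theorem~\ref{maxtrans}''), so your explicit verification that the singleton class is absorbed into the ``$-1$'' of $|\mathcal B|-1$ is precisely the detail the paper leaves to the reader.
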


The proof in~\cite{Aharoni2007} begins with a partial strong
$3\Delta$-colouring $c$ of a graph $G$ with respect to a vertex partition
$(V_1,\dots,V_m)$, an uncoloured vertex $v$, and a colour $\alpha$ not
used by $c$ on the vertex class of $v$. A new
graph $G'$ is obtained by removing from each $V_i$ the vertices whose
colour appears on the neighbourhood of the vertex $w_i$ in $V_i$ coloured
$\alpha$ (if it exists). This reduces the size of each class by at most $\Delta$. Then the strengthened version of Theorem~\ref{maxdeg} is used
to find an IT $Y$ of $G'$ containing $v$. As shown
in~\cite{Aharoni2007}, the modification of $c$ obtained by giving 
each $y_i\in Y\cap V_i$ colour $\alpha$, and each $w_i$ colour
$c(y_i)$, is a partial strong $3\Delta$-colouring that colours more
vertices than $c$ did (in particular it colours $v$). Hence in at most
$|V(G)|$ such steps a suitable 
colouring is constructed. This argument therefore gives the following
corollary of Corollary~\ref{hasv}.

\begin{corollary}\label{strongalg}
Let $\Delta$ be a positive integer. There exists an algorithm
$\cA$ that takes as input any graph $G$ with maximum degree $\Delta$
and vertex partition $(V_1,\dots,V_m)$ where $|V_i|\le3\Delta+1$ for
each $i$, and finds, in time polynomial
in $|V(G)|$, a strong $(3\Delta+1)$-colouring of $G$ with respect to
$(V_1,\dots,V_m)$. 
\end{corollary}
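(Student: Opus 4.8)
The plan is to make the proof of $s\chi(G)\le 3\Delta+1$ from \cite{Aharoni2007} algorithmic by running it one vertex at a time, replacing the (existential) strengthening of Theorem~\ref{maxdeg} used there by the algorithm of Corollary~\ref{hasv}. First I would reduce to the case where all classes have size exactly $3\Delta+1$: if $\Delta=0$ the graph is edgeless and we colour everything with a single colour; otherwise pad each $V_i$ with isolated dummy vertices up to size $3\Delta+1$, obtaining $G^+$ with partition $(V_1^+,\dots,V_m^+)$, still of maximum degree at most $\Delta$ (isolated vertices have degree $0$). A strong $(3\Delta+1)$-colouring of $G^+$ restricts to one of $G$, so it suffices to colour $G^+$. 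The algorithm $\cA$ then maintains a partial strong $(3\Delta+1)$-colouring $c$ of $G^+$, initially empty, and repeatedly performs an \emph{augmentation step} that strictly increases the number of coloured vertices, stopping when all vertices are coloured.

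One augmentation step proceeds as in \cite{Aharoni2007}. Pick an uncoloured vertex $v$, say $v\in V_1^+$. Since $c$ is a partial strong colouring, the coloured vertices of $V_1^+$ receive distinct colours and there are at most $3\Delta$ of them, so some colour $\alpha$ is not used by $c$ on $V_1^+$. For each $i$ let $w_i$ be the unique vertex of $V_i^+$ with $c(w_i)=\alpha$, if it exists; note $w_1$ does not exist. Build $G'$ from $G^+$ by deleting from each $V_i^+$ every vertex whose $c$-colour lies in $c(N_{G^+}(w_i))$ (when $w_i$ exists). As $|N_{G^+}(w_i)|\le\Delta$, each class loses at most $\Delta$ vertices, so in $G'$ every class has size at least $2\Delta+1$; moreover $G'$ is an induced subgraph of $G^+$, hence has maximum degree at most $\Delta$, and $v$ survives since $w_1$ does not exist and $v$ is uncoloured. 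Apply the algorithm of Corollary~\ref{hasv} to $G'$ with the vertex $v$, obtaining in time polynomial in $|V(G^+)|$ an IT $Y=\{y_1,\dots,y_m\}$ of $G'$ with $y_i\in V_i^+$ and $v=y_1\in Y$. Finally update $c$ to $c'$ by assigning colour $\alpha$ to each $y_i$ and colour $c(y_i)$ to each $w_i$ that exists (interpreting this as leaving $w_i$ uncoloured when $y_i$ is uncoloured).

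It then remains to verify that $c'$ is again a partial strong $(3\Delta+1)$-colouring which colours strictly more vertices than $c$ (in particular it colours $v$). This is precisely the recolouring lemma of \cite{Aharoni2007}, and I would cite it rather than reproduce it; the key points are that $Y$ is independent in $G'$ and hence in $G^+$, so assigning the single new colour $\alpha$ to all the $y_i$ creates no conflict, and that the deletions defining $G'$ force $c(y_i)\notin c(N_{G^+}(w_i))$ for each existing $w_i$, so re-colouring $w_i$ with $c(y_i)$ creates no conflict either; within each class only $y_i$ and $w_i$ change, and the net effect on each class is at worst a swap of colours among its vertices, with $v$ additionally acquiring colour $\alpha$. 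This verification is the one genuinely delicate point, but it uses only the properties of $Y$ as an IT of $G'$ containing $v$, so it applies verbatim to the $Y$ produced by Corollary~\ref{hasv}; see also \cite{thesis}.

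Finally I would assemble the running-time bound. Each augmentation step builds $G'$, runs the polynomial-time algorithm of Corollary~\ref{hasv} on $G'$, and recolours, all in time polynomial in $|V(G^+)|\le(3\Delta+1)|V(G)|$, hence polynomial in $|V(G)|$; since each step colours at least one new vertex, the number of steps is at most $|V(G^+)|$. Therefore $\cA$ terminates in time polynomial in $|V(G)|$ and outputs a strong $(3\Delta+1)$-colouring of $G^+$, which restricts to the required strong $(3\Delta+1)$-colouring of $G$ with respect to $(V_1,\dots,V_m)$. The only new ingredient is the call to Corollary~\ref{hasv}; the rest is the bookkeeping already present in \cite{Aharoni2007}, so I expect the main obstacle to be nothing more than checking that this substitution is legitimate — which it is.
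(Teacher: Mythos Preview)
Your proposal is correct and follows essentially the same approach as the paper: replace the existential IT-through-$v$ result in the Aharoni--Berger--Ziv recolouring argument by the algorithmic Corollary~\ref{hasv}, and iterate at most $|V(G^+)|$ times. The padding to equal-size classes and the $\Delta=0$ base case are minor details the paper leaves implicit, but otherwise your augmentation step, the construction of $G'$, and the recolouring rule match the paper's outline exactly.
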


\subsection{Hitting Sets for Maximum Cliques}\label{maxcliques}
It was first shown by
Rabern~\cite{Rabern2011}, and later (with a best possible bound) by
King~\cite{King2011}, that when the maximum degree and clique number of
a graph are close enough, the graph contains an independent set
meeting all maximum cliques. Finding such a set is important for various colouring problems (see e.g.~\cite{King2011, Reed2001}). King's result is as follows,
where $\omega(G)$ denotes the clique number of the graph $G$. 

\begin{theorem}\label{clique}
Let $G$ be a graph of maximum degree $\Delta$ such that
${\omega(G)>\frac{2}{3}(\Delta+1)}$. Then $G$ contains an independent
set meeting every maximum clique. 
\end{theorem}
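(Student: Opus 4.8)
The plan is to derive Theorem~\ref{clique} from the domination form of Haxell's theorem (Theorem~\ref{maxtrans}) applied to a suitable auxiliary vertex-partitioned graph $H$. The classes of $H$ will be associated with the maximum cliques of $G$, so that an independent transversal of $H$ is the same thing as a choice, for each maximum clique $K$, of a vertex $c(K)\in K$ with the $c(K)$ pairwise non-adjacent in $G$ --- i.e.\ an independent set of $G$ meeting every maximum clique. Concretely one can take $V(H)=\{(v,K): K\text{ a maximum clique of }G,\ v\in K\}$, with class $V_K=\{(v,K):v\in K\}$ and $(v,K)$ adjacent to $(v',K')$ exactly when $vv'\in E(G)$; then an independent set meeting all maximum cliques corresponds to an IT of $H$ and conversely. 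Some care is needed because a single vertex of $G$ may lie in many maximum cliques, which would interfere with the domination estimate below; a standard remedy is to replace the maximum cliques by the \emph{atoms} of their incidence hypergraph (vertices lying in exactly the same family of maximum cliques), which are themselves cliques of $G$ and whose union with every maximum clique is still nonempty, so that each vertex of $G$ is used in at most one class.

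With the auxiliary graph fixed, essentially the entire argument is the verification of the hypothesis of Theorem~\ref{maxtrans}: for every subfamily $\mathcal{B}$ of classes, $H_{\mathcal{B}}$ has no dominating set of size at most $2(|\mathcal{B}|-1)$. Assume for contradiction that $D$ is such a set. Deleting the within-class edges in $H_{\mathcal{B}}$ is exactly what makes this delicate --- each vertex of a class $K\in\mathcal{B}$ must be dominated by a vertex of $D$ coming from a \emph{different} class --- but letting $D_V\subseteq V(G)$ be the set of first coordinates occurring in $D$, one obtains that $D_V$ totally dominates $\bigcup_{V_K\in\mathcal{B}}K$ in $G$ with $|D_V|\le 2(|\mathcal{B}|-1)$. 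One then plays this off against the density of maximum cliques: no vertex of $G$ is adjacent to more than $\omega-1$ vertices of any given maximum clique (else that clique together with the vertex is a larger clique), and more globally a structural lemma of Hajnal bounds $|\bigcup\mathcal{Q}|$ in terms of $|\bigcap\mathcal{Q}|$ for any family $\mathcal{Q}$ of maximum cliques. Feeding in the hypothesis in the form $\Delta<\tfrac32\omega-1$ --- equivalently, every vertex has ``excess degree'' $d(v)-(\omega-1)$ strictly below $\omega/2$ --- a weighted double count over pairs consisting of a vertex of $D_V$ and a class it helps to dominate contradicts the bound on $|D_V|$. This is precisely where the constant $\tfrac23$ is forced, and it is optimal: the balanced blow-up $C_5[K_t]$ has $\omega=\tfrac23(\Delta+1)$, and its five maximum cliques cannot all be met by any independent set (an independent set there has at most two vertices, each in only two of the five cliques).

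I expect the domination estimate to be the real obstacle: a crude union bound over classes is not enough, so one has to track how much of $\bigcup_{V_K\in\mathcal{B}}K$ a single vertex of a hypothetical small dominating set can reach, and this is what forces both the careful choice of the classes (atoms rather than maximum cliques) and the use of Hajnal-type clique structure in place of bare degree bounds. Once the domination condition is established, Theorem~\ref{maxtrans} produces an IT of $H$ and hence the desired independent set, which is the non-algorithmic result of King~\cite{King2011}; the algorithmic version then follows by running Theorem~\ref{main} on the same auxiliary graph, using that the strict inequality $\omega>\tfrac23(\Delta+1)$ leaves enough integer slack to absorb the additive $\epsilon$ loss in its conclusion.
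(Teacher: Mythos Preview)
Your proposal is a plan rather than a proof, and the plan diverges from the paper's (i.e.\ King's) argument at the key structural step.

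The paper does not prove Theorem~\ref{clique} from scratch; it sketches King's proof and then gives an algorithmic variant. King's route is not to verify the domination hypothesis of Theorem~\ref{maxtrans} for a class-per-clique auxiliary graph. Instead, Hajnal's lemma is used \emph{upstream} to partition the maximum cliques into clusters $\cC_i$ whose intersections (``cores'') $V_i=\bigcap_{K\in\cC_i}K$ each have size at least $(\Delta+1)/3$ and are pairwise disjoint. The vertex classes are then the cores themselves, sitting inside $G$ with no duplication. Because each $V_i$ is a clique, a vertex $v\in V_i$ spends $|V_i|-1$ of its degree inside $V_i$, so its external degree is at most $\Delta-|V_i|+1$; combined with $|V_i|\ge(\Delta+1)/3$ this gives the lopsided bound $d_{\text{out}}(v)\le\min\{k,|V_i|-k\}$ with $k=\lceil(\Delta+1)/3\rceil$, and King's lopsided IT lemma (the paper's Lemma~\ref{boundeddeg} is the $k-1$ version) yields the transversal. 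An IT of the cores hits every maximum clique because each clique contains the core of its cluster.

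Your ``atoms'' fix goes in the wrong direction. Atoms \emph{refine} the clique cover, so they can be arbitrarily small --- even singletons --- and an independent transversal across all atoms is far stronger than an independent set meeting every maximum clique (and need not exist: two adjacent singleton atoms already kill it). The move that works is to \emph{coarsen}: cluster the cliques so that the common intersections are large, which is precisely what Hajnal's lemma buys. Your class-per-clique graph $H$ with pairs $(v,K)$ also runs into trouble: the domination estimate you defer to a ``weighted double count'' is the entire content of the theorem, and because one vertex of $G$ can sit in many maximum cliques the bound $|D_V|\le 2(|\cB|-1)$ does not directly control how much of $\bigcup_{K\in\cB}K$ the set $D_V$ can dominate. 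Repairing this essentially forces you back to the clustering step, at which point you have reconstructed King's argument.
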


As shown in~\cite{King2011}, if ${\omega(G)>\frac{2}{3}(\Delta+1)}$
then the set of all maximum cliques in $G$ can be partitioned into
classes such that for each class $\cC_i$ we have
$|\cap_{C\in\cC_i}V(C)|\geq(\Delta+1)/3$. Thus for each class $\cC_i$
there exists a  
``core'' $V_i$ of at least $(\Delta+1)/3$ vertices that is contained
in every clique in $\cC_i$. Moreover the $V_i$ are all disjoint. Thus
an IT of the subgraph of $G$ induced by the union of
all the cores $V_i$ (where the $V_i$ are the vertex classes) provides
a suitable independent hitting set. By following the argument
in~\cite{King2011} (and being slightly more careful with divisibility),
it can be shown that it suffices to establish the following
modification of Theorem~\ref{maxtrans} with the value
$k=\lceil(\Delta+1)/3\rceil$. 

\begin{lemma}\label{boundeddeg} 
Let $k$ be a positive integer and let $G$ be a graph with vertex partition
$(V_1,\dots,V_m)$. If for every $i$ and every $v\in V_i$, the vertex $v$ has at
most $\min\{k-1,|V_i|-k\}$ neighbours outside $V_i$, then $G$ has an IT.
\end{lemma}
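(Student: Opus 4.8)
The plan is to derive Lemma~\ref{boundeddeg} from Theorem~\ref{maxtrans} — more precisely, from the refinement noted just after it, that if $G$ has no IT then there is some $\cB\subseteq\{V_1,\dots,V_m\}$ for which $G_\cB$ is dominated in $G$ by the vertex set of a constellation $K$ for $\cB$. Before invoking this I would make two harmless reductions. Since the hypothesis of the lemma constrains only those neighbours of a vertex $v$ that lie outside $A(v)$, deleting all within-class edges changes neither the hypothesis nor whether $G$ has an IT, so we may assume each $V_i$ is independent. Also, applying the hypothesis to any $v\in V_i$ gives $|V_i|\ge k$; taking the classes of the partition to be nonempty (equivalently, requiring the hypothesis to be nonvacuous for each class), we thus have $n_i:=|V_i|\ge k$ for every $i$.

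Now suppose for contradiction that $G$ has no IT, and fix such a $\cB$ and constellation $K$; put $t=|\cB|$. The whole argument rests on a single count of incidences between $V(G_\cB)$ and $V(K)$: since $V(K)$ dominates $G_\cB$, every vertex of $G_\cB$ has a neighbour in $V(K)$, and hence
\[
\sum_{V_i\in\cB} n_i \;=\; |V(G_\cB)| \;\le\; \sum_{x\in V(K)} d_{G_\cB}(x) \;\le\; \sum_{x\in V(K)} \min\{\,k-1,\; n_{A(x)}-k\,\},
\]
the last step being the hypothesis applied to each $x\in V(K)$. The key idea is to bound this last sum by splitting $V(K)$ into the centres and the leaves of the stars of $K$, using a different half of the minimum for each. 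By Definition~\ref{constell}, the leaves of $K$ form an IT of exactly $t-1$ classes of $\cB$: there are $t-1$ leaves, lying in $t-1$ distinct classes, so exactly one class $V_A\in\cB$ contains no leaf; and since every star contributes a leaf, $K$ has at most $t-1$ components. For the leaves I would use $\min\{k-1,n_i-k\}\le n_i-k$, so that the leaf terms sum to $\bigl(\sum_{V_i\in\cB} n_i\bigr)-n_A-k(t-1)$ (the sum running over the leaf classes). For the centres I would use $\min\{k-1,n_i-k\}\le k-1$, so that the centre terms sum to at most $(k-1)(t-1)$.

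Putting these together, $\sum_{V_i\in\cB} n_i \le (k-1)(t-1) + \sum_{V_i\in\cB} n_i - n_A - k(t-1)$; cancelling $\sum_{V_i\in\cB} n_i$ gives $n_A\le(k-1)(t-1)-k(t-1)=-(t-1)$, which is negative since $t\ge2$, contradicting $n_A\ge k\ge1$. Therefore $G$ has an IT.

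I expect the only real obstacle to be spotting the asymmetric use of the two-sided degree bound: the leaves must be charged through the ``$n_i-k$'' side precisely so that their contribution telescopes against $\sum_i n_i$ and cancels, while the centres are charged through the ``$k-1$'' side so that their total is governed purely by the number of components, which the constellation structure caps at $t-1$. The remaining ingredients — the two reductions, reading the leaf/component counts off Definition~\ref{constell}, and the closing arithmetic — are routine; one should merely check that $t\ge2$ (which holds because $V(K)\ne\emptyset$ must dominate the nonempty graph $G_\cB$) and that all classes are nonempty, so that the final inequality is genuinely violated.
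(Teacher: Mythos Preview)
Your argument is correct, and the heart of it --- the double count of edges from $V(K)$ into $G_\cB$, splitting $V(K)$ into leaves (charged via the $|V_i|-k$ side) and centres (charged via the $k-1$ side) --- is exactly the idea the paper uses. The difference is the starting point: you invoke the non-algorithmic Theorem~\ref{maxtrans} (in its constellation form), whereas the paper applies the algorithmic Theorem~\ref{main} with $\epsilon=\tfrac{1}{k-1}$. In the paper's version the dominating set is $D\supseteq V(K)$ rather than $V(K)$ itself, the ``centre'' role is played by all of $D\setminus Y$ (bounded by $(1+\epsilon)(|\cB|-1)$ vertices), and the choice of $\epsilon$ is precisely what makes the same arithmetic close.

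What each approach buys: yours is cleaner and slightly sharper for the bare existence statement (you even recover a strict inequality without needing the $\epsilon$ slack). The paper's route, however, is the whole point of the section --- it yields a polynomial-time algorithm that actually \emph{finds} the IT, which then feeds into Theorem~\ref{cliquealg}. So your proof establishes Lemma~\ref{boundeddeg} as stated, but would not by itself give the algorithmic consequence the paper is after.
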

In~\cite{King2011} the slightly stronger statement with
$\min\{k-1,|V_i|-k\}$ replaced by $\min\{k,|V_i|-k\}$ is established
and used, but the proof is not algorithmic. The advantage of
Lemma~\ref{boundeddeg} is that we can use Theorem~\ref{main} to give
an algorithmic proof, as follows. 

\begin{proof}[Proof:] As before we may assume each $V_i$ is independent.
Since each vertex has at most $k-1$ neighbours, we have that $G$ is
$k$-claw-free. Let $\epsilon=\frac{1}{k-1}$. We apply the
algorithm FindITorBD to $G$ and $(V_1,\dots,V_m)$. Since $k$ is fixed,
the running
time is polynomial in $|V(G)|$, where the degree of the polynomial
depends only on $k$. We obtain either
\begin{enumerate}
\item an IT of $G$, or
\item a set $\mathcal B$ of vertex classes and a set $D$ of vertices
  of $G$ such that $D$ dominates $G_{\cB}$ in $G$ and
  $|D|<(2+\epsilon)(|{\mathcal B}|-1)$. Moreover $D$ contains $V(K)$
  for a constellation $K$ of some $\cB_0\supseteq\cB$, where
  $|D\setminus V(K)|<\epsilon(|\cB|-1)$.
\end{enumerate}
If (1) is the output for every input $G$ then we have the required
algorithm, so suppose (2) holds for some $G$. Recall from
Definition~\ref{constell} that the set of leaves in the constellation $K$
forms an IT of $|\cB_0|-1$ vertex classes of $\cB_0$, and hence in
particular $D$ contains an IT $Y$ of a set $\cB'$ of $|\cB|-1$ vertex
classes of $\cB$.  

Since $D$
dominates $G_{\mathcal B}$,
we know $$\sum\limits_{v\in D}d(v)\ge\sum\limits_{V_i\in {\mathcal
    B}}|V_i|\ge\sum\limits_{V_i\in {\mathcal B}'}|V_i|.$$  

Since $Y$ is an IT of ${\mathcal B}'$, we have that $|Y|\ge|{\mathcal
  B}|-1$ and so $|D\setminus Y|<(1+\epsilon)(|{\mathcal B}|-1)$. For
any vertex $u\in D\setminus Y$, we know $d(u)\le k-1$ and for any
vertex $v\in Y$, we have $d(v)\le |V_i|-k$ where $v\in V_i$. Hence, 
\begin{align*}
\sum\limits_{v\in D}d(v)&\le\sum\limits_{v\in Y}d(v)+(k-1)|D\setminus Y|\\
&<\sum\limits_{V_i\in {\mathcal B}'}\left(|V_i|-k\right)+(k-1)(1+\epsilon)(|{\mathcal B}|-1)\\
&\le\sum\limits_{V_i\in {\mathcal B}'}|V_i|-(|\cB|-1)+\epsilon (k-1)(|{\mathcal B}|-1)\\
&=\sum\limits_{V_i\in {\mathcal B}'}|V_i|.
\end{align*}
This is a contradiction and so outcome (2) never occurs. This
completes the proof.
\end{proof}

When $\Delta$ is fixed, the sets $V_i$ can be found algorithmically in
time polynomial in $|V(G)|$. Thus we have the following.

\begin{theorem}\label{cliquealg}

\begin{enumerate}
\item Let $k$ be a positive integer. There exists an algorithm $\cA$ that
finds an IT in any graph $G$ with vertex partition
$(V_1,\dots,V_m)$ with the property that, for each $i$ and each $v\in
V_i$, the vertex $v$ has at
most $\min\{k-1,|V_i|-k\}$ neighbours outside $V_i$. The running time
of $\cA$ is polynomial in $|V(G)|$.
\item Let $\Delta$ be a positive integer. There exists an algorithm
  $\cA'$ that
finds, in time polynomial in $|V(G)|$, an
independent set meeting every maximum clique in any graph $G$ with
maximum degree $\Delta$ and $\omega(G)>2(\Delta+1)/3$.
\end{enumerate}
\end{theorem}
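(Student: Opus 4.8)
The plan is to derive both parts of the theorem directly from Theorem~\ref{main}, using the reductions already set up in the discussion preceding Lemma~\ref{boundeddeg}. Part~(1) is essentially the observation that the proof of Lemma~\ref{boundeddeg} is already algorithmic, and part~(2) then follows by composing it with the (effective) reduction from the maximum-clique problem described just before that lemma.

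For part~(1) I would dispose of the trivial case $k=1$ separately — there the hypothesis forces $G$ to have no edges between classes, so any transversal is an IT — and for $k\ge2$ take $r=k$ and $\epsilon=\frac1{k-1}$. After deleting the edges inside classes, which affects neither the hypothesis nor the IT property, each $V_i$ is independent and $G$ has maximum degree at most $k-1$, hence is $k$-claw-free, so FindITorBD applies; by Theorem~\ref{main} it runs in time polynomial in $|V(G)|$ (with exponent depending only on $k$) and outputs either an IT of $G$ or a pair $(\cB,D)$ as in outcome~(2). I would then invoke exactly the counting from the proof of Lemma~\ref{boundeddeg} to exclude the latter: $D$ dominating $G_{\cB}$ gives $\sum_{v\in D}d(v)\ge\sum_{V_i\in\cB}|V_i|$, while $D$ contains (from the leaves of the constellation $K$) an IT $Y$ of $|\cB|-1$ of the classes of $\cB$, and the bounds $d(v)\le|V_i|-k$ for $v\in Y\cap V_i$ and $d(u)\le k-1$ for the fewer than $(1+\epsilon)(|\cB|-1)$ vertices of $D\setminus Y$, together with $(k-1)\epsilon=1$, force $\sum_{v\in D}d(v)<\sum_{V_i\in\cB}|V_i|$ — a contradiction. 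So FindITorBD necessarily returns an IT, which is the promised algorithm $\cA$.

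For part~(2) I would run the reduction described before Lemma~\ref{boundeddeg}, which for fixed $\Delta$ is effective: all cliques of $G$ can be listed in time polynomial in $|V(G)|$ (each clique lies inside some $N[v]$ with $|N[v]|\le\Delta+1$, so one only enumerates the at most $2^{\Delta+1}$ subsets of each closed neighbourhood), whence $\omega(G)$ and all maximum cliques are found in polynomial time; King's construction~\cite{King2011}, with the divisibility refinement noted before Lemma~\ref{boundeddeg}, then partitions the maximum cliques into classes $\cC_i$ with pairwise disjoint cores $V_i$ and produces, again in polynomial time for fixed $\Delta$, the auxiliary graph $H:=G[\bigcup_i V_i]$ with partition $(V_1,\dots,V_m)$ satisfying the hypothesis of part~(1) for $k=\lceil(\Delta+1)/3\rceil$. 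Applying part~(1) to $H$ yields in polynomial time an IT $Y$ of $H$; since $H$ is an induced subgraph of $G$, $Y$ is independent in $G$, and since $Y$ contains one vertex of every core $V_i$ while every maximum clique of $G$ lies in some $\cC_i$ and hence contains $V_i$, the set $Y$ meets every maximum clique. This is the algorithm $\cA'$.

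I do not anticipate a serious obstacle. Part~(1) is just the proof of Lemma~\ref{boundeddeg} carried out algorithmically, and the only ingredient of part~(2) not covered by Theorem~\ref{main} — that King's partition of the maximum cliques into classes with disjoint cores is polynomial-time computable for fixed $\Delta$ and yields an instance satisfying the hypothesis of part~(1) — is precisely the constructive content of~\cite{King2011}, which I would simply cite. The main care needed is in making explicit that that reduction can be implemented in polynomial time when $\Delta$ is fixed, and in the bookkeeping with $\lceil(\Delta+1)/3\rceil$ versus $(\Delta+1)/3$ so that the $\min\{k-1,|V_i|-k\}$ bound genuinely holds for the cores.
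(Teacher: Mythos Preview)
Your proposal is correct and follows essentially the same approach as the paper: part~(1) is exactly the algorithmic reading of the proof of Lemma~\ref{boundeddeg} (applying FindITorBD with $r=k$, $\epsilon=\frac{1}{k-1}$ and ruling out outcome~(2) by the same degree-counting contradiction), and part~(2) is the composition with King's polynomial-time reduction at $k=\lceil(\Delta+1)/3\rceil$, just as the paper indicates. Your explicit handling of the degenerate case $k=1$ and the brief justification that enumerating maximum cliques is polynomial for fixed~$\Delta$ are welcome additions not spelled out in the paper.
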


\section{Concluding remarks}\label{concrem}

As noted in the introduction, the running time of the algorithm in
Theorem~\ref{main} is polynomial in $|V(G)|$,
where the degree of the polynomial depends on the input parameters
$\epsilon$ and $r$. (Similarly in Corollary~\ref{maxdegalg} it depends
on $\Delta$). While the dependence on $\epsilon$ seems
unavoidable, we are not certain of the nature (or even the
necessity) of the dependence on $r$. If this could be avoided, or even
if the condition of being $r$-claw-free could be substantially
weakened, then algorithmic versions of many more applications of
Theorems~\ref{maxdeg} and~\ref{maxtrans} would follow. The
$r$-claw-free condition
was required by our choice of signature vector, and used in the bounds
in Lemma~\ref{allbigx}. We consider it
an interesting open question as to whether this condition is essential.
Recently, a randomised algorithm (that uses FindITorBD as a subroutine) was developed in~\cite{GHH} that overcomes the exponential dependence on $\Delta$ in Corollary~\ref{maxdegalg}.

See~\cite{thesis} for full details on the applications outlined in
Section~\ref{apps} as 
well as others (such as e.g. special graph partitions as
in~\cite{Alon+2003}, and more specific results on circular chromatic
index as in~\cite{Kaiser2004}), and more discussion of other applications of 
Theorems~\ref{maxdeg} and~\ref{maxtrans}. 

We close by pointing out that Theorems~\ref{maxdeg} and~\ref{maxtrans}
have both a {\it combinatorial} proof~\cite{Haxell1995} and a {\it
  topological} proof~\cite{Aharoni2002, Haxell2001, Meshulam2001}, 
using the notion of {\it topological connectedness} (see
e.g.~\cite{Aharoni2006}). The algorithms 
presented in this paper, and also that of~\cite{Annamalai2016,
AnnamalaiThesis, Annamalai2017}, are based on the combinatorial proofs of these
results. There are other criteria guaranteeing the existence of
independent transversals for which only a topological proof is known,
for example~\cite{Aharoni2000, Aharoni2006}, which also
have many applications. Thus algorithmic versions of these results
would also be very interesting and useful, but currently seem out of
reach. One simply stated example concerns 3-{\it partite} 3-uniform
hypergraphs, in which the vertex set has a partition into 3 parts $A$,
$B$, and $C$, and each edge contains exactly one vertex from each
part. (Thus a 3-partite 3-uniform hypergraph is also bipartite in the
sense of Section~\ref{matchings}.) The following is a slight
reformulation of the 
main result of~\cite{Aharoni2001}, which is a direct application
of~\cite{Aharoni2000}. 

\begin{theorem}
Let $H$ be a 3-partite 3-uniform hypergraph and let $k$ be a non-negative
integer. Then $H$ contains either a set of $k$ disjoint edges, or a
set $W$ of vertices such that $|W|\leq2(k-1)$ and $W\cap e\not=\emptyset$
for every edge $e$ of $H$. 
\end{theorem}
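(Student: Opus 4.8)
The statement is a deficiency reformulation of Ryser's conjecture for $r=3$, and the only route I know passes through topology, following Aharoni~\cite{Aharoni2001}: one deduces it from the topological version of Hall's theorem for hypergraphs of Aharoni and Haxell~\cite{Aharoni2000}. The plan is to mimic the derivation of Theorem~\ref{hyper} from Theorem~\ref{maxtrans} in Subsection~\ref{matchings}, with that topological tool playing the role Theorem~\ref{maxtrans} played there.

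First I would reduce to proving $\tau(H)\le 2\nu(H)$, where $\nu(H)$ is the maximum number of pairwise disjoint edges and $\tau(H)$ the minimum size of a set of vertices meeting every edge: if $H$ has no $k$ disjoint edges then $\nu(H)\le k-1$, so a cover of size $2\nu(H)\le 2(k-1)$ suffices, and conversely the case $k=\nu(H)+1$ recovers $\tau(H)\le 2\nu(H)$. Next, writing the three parts as $A,B,C$, for each $a\in A$ let $H_a$ be the bipartite graph with sides $B$ and $C$ whose edges are the pairs $\{b,c\}$ with $\{a,b,c\}\in E(H)$. Then a matching of $H$ is exactly a choice of distinct $a_1,\dots,a_t\in A$ together with pairwise disjoint edges $e_i\in H_{a_i}$, i.e.\ a partial system of disjoint representatives (SDR) for the family $(H_a)_{a\in A}$; hence $\nu(H)$ is the largest size of such a partial SDR, and ``$H$ has no $k$ disjoint edges'' says exactly that $(H_a)_{a\in A}$ has no partial SDR of size $k$.

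The key input is then the Aharoni--Haxell theorem~\cite{Aharoni2000}: a family of graphs (hyperedges of size $2$) has a full SDR provided $\nu\bigl(\bigcup_{a\in J}H_a\bigr)\ge 2|J|-1$ for every $J$, and, in deficiency form, if $(H_a)_{a\in A}$ has no partial SDR of size $k$ then some $J\subseteq A$ satisfies $\nu\bigl(\bigcup_{a\in J}H_a\bigr)\le 2(|J|-|A|+k-1)$ (keeping track of the floors in the underlying connectivity estimate is what pins down this exact constant). Since each $H_a$, and hence $G_J:=\bigcup_{a\in J}H_a$, is bipartite with sides $B$ and $C$, K\"onig's theorem yields a vertex cover $T\subseteq B\cup C$ of $G_J$ with $|T|=\nu(G_J)$. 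A short check shows $W:=(A\setminus J)\cup T$ meets every edge of $H$: an edge $\{a,b,c\}$ with $a\notin J$ is hit by $a$, and one with $a\in J$ has $\{b,c\}\in E(G_J)$ and so is hit by $T$. Then $|W|=(|A|-|J|)+\nu(G_J)\le(|A|-|J|)+2(|J|-|A|+k-1)=(|J|-|A|)+2(k-1)\le 2(k-1)$, using $|J|\le|A|$. The factor $2$ is exactly the $r-1=2$ of Ryser for $r=3$, and it enters through the topological connectivity bound $\eta(\mathcal M(G))\ge\nu(G)/2$ (up to an additive constant) for the matching complex $\mathcal M(G)$ of a graph $G$.

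The main obstacle is precisely that topological ingredient: the lower bound on the connectivity of matching complexes, and the topological Hall theorem that converts such bounds into (partial) SDRs, both of which are proved by homological arguments (nerve-type covers and Mayer--Vietoris). In contrast to the $r$-uniform bipartite setting of Theorem~\ref{hyper}, where the obstruction to a matching is the combinatorial object of a constellation (Definition~\ref{constell}) and can even be produced algorithmically by FindITorBD, here no combinatorial certificate for ``no partial SDR of size $k$'' is known; this is exactly why, as noted in Section~\ref{concrem}, an algorithmic version of this theorem seems out of reach at present.
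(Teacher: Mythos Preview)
The paper does not give its own proof of this statement: it appears in Section~\ref{concrem} only as an illustration of a result whose known proof is topological, attributed there to Aharoni~\cite{Aharoni2001} as a direct application of the topological Hall theorem of Aharoni and Haxell~\cite{Aharoni2000}. Your outline \emph{is} that cited argument---reduce to $\tau(H)\le 2\nu(H)$, pass to the bipartite link graphs $(H_a)_{a\in A}$, apply the deficiency form of~\cite{Aharoni2000} together with the connectivity lower bound for matching complexes to locate $J\subseteq A$ with $\nu(G_J)$ suitably small, invoke K\"onig on the bipartite $G_J$, and take $W=(A\setminus J)\cup T$. So you are reproducing precisely the proof the paper points to rather than anything the paper itself supplies, and your closing remark that the topological ingredient is what currently blocks an algorithmic version is exactly the paper's reason for stating the theorem.
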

If the requirement $|W|\leq 2(k-1)$ is relaxed to $|W|\leq3(k-1)$ then a
simple greedy matching procedure gives an algorithmic proof. The
best known algorithmic result for this problem is given by the proof in~\cite{HaxellPMH}, which is also combinatorial, and provides a suitable algorithm
for $|W|\leq 5(k-1)/2$.  

\bigskip

\noindent{\bf{Acknowledgements:}} The authors are indebted to
Chidambaram Annamalai for very helpful discussions and
correspondence. They would also like to thank Nikhil Bansal and David Harris for helpful comments.

\end{document}